\documentclass[10pt]{amsart}

\usepackage[T1]{fontenc}
\usepackage[lf]{Baskervaldx} 
\usepackage[bigdelims,vvarbb]{newtxmath} 
\usepackage[cal=boondoxo]{mathalfa} 

\usepackage[normalem]{ulem}
\usepackage{latexsym} 

\usepackage{lipsum} 

\usepackage{amsmath}			
\usepackage{amsfonts}			
\usepackage{amsthm}
\usepackage{thmtools}			
\usepackage{mathtools}

\usepackage{shuffle}		
\usepackage{mathrsfs}

\usepackage{graphicx}
\usepackage{float}	
\usepackage{comment} 
\usepackage{layout}
\usepackage{color}			
\usepackage{listings} 

\usepackage{tikz}				
\usepackage{tikz-cd}
\usetikzlibrary{decorations.pathmorphing,arrows,arrows.meta,calc,shapes,shapes.geometric,decorations.pathreplacing,knots}
\tikzcdset{arrow style=tikz, diagrams={>=stealth}}

\usepackage{pgfplots}
\usepackage{tabularx}
\setlength{\tabcolsep}{0.8cm}

\usepackage{subfig}

\usepackage{enumitem}		
\setitemize[0]{font=\textbf, label=--}

\usepackage{color}
\definecolor{BleuTresFonce}{rgb}{0.215, 0.215, 0.36}

\usepackage[colorlinks,final,backref=page,hyperindex]{hyperref}
\hypersetup{citecolor=BleuTresFonce, linkcolor=black}

\usepackage[nameinlink]{cleveref}

\usepackage{fullpage}
\usepackage{setspace}			
\onehalfspacing		

\usepackage{epigraph}
\usepackage{wrapfig}

\usepackage{stmaryrd}

\theoremstyle{plain}
\newtheorem{theorem}{Theorem}

\newtheorem{maintheorem}{Theorem}

\newtheorem{lemma}[theorem]{Lemma}
\newtheorem{proposition}[theorem]{Proposition}
\newtheorem*{proposition*}{Proposition}
\newtheorem{corollary}[theorem]{Corollary}

\theoremstyle{definition}
\newtheorem{definition}[theorem]{Definition}
\newtheorem*{definition*}{Definition}
\newtheorem{defiprop}[theorem]{Definition/Proposition}
\newtheorem{example}[theorem]{Example}
\newtheorem{notation}[theorem]{Notation}
\newtheorem{remark}[theorem]{Remark}
\newtheorem*{remark*}{Remark}

\newcommand{\R}{\mathbb{R}}
\newcommand{\Z}{\mathbb{Z}}
\newcommand{\N}{\mathbb{N}}
\newcommand{\Q}{\mathbb{Q}}

\renewcommand{\H}{\mathrm{H}}

\newcommand{\id}{\mathrm{id}}

\newcommand{\bbC}{\mathbb{C}}
\newcommand{\bbF}{\mathbb{F}}
\newcommand{\bbN}{\mathbb{N}}

\newcommand{\bbR}{\mathbb{R}}

\newcommand{\sfC}{\mathsf{C}}
\newcommand{\sfD}{\mathsf{D}}

\newcommand{\sfR}{\mathsf{I\!\!\:R}} 

\newcommand{\rmB}{\mathrm{B}}
\newcommand{\rmC}{\mathrm{C}}

\newcommand{\rmH}{\mathrm{H}}
\newcommand{\rmN}{\mathrm{N}}
\newcommand{\rmP}{\mathrm{P}}
\newcommand{\rmT}{\mathrm{T}}

\newcommand{\calA}{\mathcal{A}}
\newcommand{\calC}{\mathcal{C}}
\newcommand{\calE}{\mathcal{E}}
\newcommand{\calO}{\mathcal{O}}
\newcommand{\calR}{\mathcal{R}} 
\newcommand{\calS}{\mathcal{S}} 
\newcommand{\calT}{\mathcal{T}}

\newcommand{\Sq}{\mathrm{Sq}}

\renewcommand{\phi}{\varphi}
\newcommand{\eps}{\varepsilon}


\newcommand{\As}{\mathcal{A}\mathrm{s}}
\newcommand{\Com}{\mathcal{C}\mathrm{om}}

\newcommand{\Cech}{\check{\calC}}
\newcommand{\Rips}{\calR}

\newcommand{\Func}{\mathrm{Func}}
\newcommand{\Hom}{\mathrm{Hom}}

\newcommand{\Sing}{\mathrm{Sing}}
\newcommand{\DCpx}{\Delta \mathsf{Cpx}}
\newcommand{\Cpx}{\mathsf{Cpx}}
\newcommand{\sSet}{\mathsf{sSet}}

\newcommand{\grVect}{\mathsf{gr}\mbox{-}\mathsf{Vect}}
\newcommand{\Ch}{\mathsf{Ch}}
\newcommand{\coCh}{\mathsf{coCh}}

\renewcommand{\Top}{\mathsf{Top}}

\newcommand{\Alg}{\mathsf{Alg}}
\newcommand{\AsAlg}{\Alg_{\As}}
\newcommand{\ComAlg}{\Alg_{\Com}}
\newcommand{\AinfAlg}{\Alg_{\calA_\infty}}

\newcommand{\Trans}{\mathsf{Trans}}
\newcommand{\fTrans}{\mathsf{fTrans}}

\newcommand{\ho}{\mathsf{ho}}

\newcommand{\op}{\mathrm{op}}

\newcommand{\For}{\mathrm{forget}}

\newcommand{\APL}{\mathrm{A}_{\mathrm{PL}}}
\newcommand{\Alpha}{\mathcal{A}}

\renewcommand{\d}{\mathrm{d}}

\newcommand{\dGH}{\mathrm{d}_{\mathcal{GH}}}

\renewcommand{\inf}{\mathrm{inf}}
\renewcommand{\min}{\mathrm{min}}
\renewcommand{\max}{\mathrm{max}}
\renewcommand{\sup}{\mathrm{sup}}
\newcommand{\dist}{\mathrm{dist}}

\newcommand{\RP}{\mathbb{R}\mathrm{P}^2}



\title{Multiplicative persistent distances}
\address{LAGA, Universit\'e Paris 13, 99 Avenue Jean Baptiste Cl\'ement 93430, Villetaneuse, France}

\author{Gr\'egory \textsc{Ginot}} 
\author{Johan \textsc{Leray}} 
\email{ginot@math.univ-paris13.fr, leray@math.univ-paris13.fr}

\date{\today}

\dedicatory{"Bats-toi, signe et persiste" --  France Gall}

\thanks{The authors acknowledge support from the project ANR-16-CE40-0003 ChroK. The second author is financed by a postdoctoral allocation of the DIM Math Innov - R\'egion \^Ile de France.}

\begin{document}
	
\begin{abstract}
	We define and study several new interleaving distances for persistent cohomology which take into account the algebraic structures of the cohomology of a space, for instance the cup product or the action of the Steenrod algebra. In particular, we prove that there exists a persistent  $\calA_{\infty}$-structure associated to data sets and  and we define the associated distance. We prove the stability of these new distances for \v{C}ech or Vietoris Rips complexes with respect to the Gromov-Hausdorff distance, and we compare these new distances with each other and the classical one, building some examples which prove that they are not equal in general and refine effectively the classical bottleneck distance. 
\end{abstract}
	
\maketitle

\section*{Introduction}

Persistent homology arised as a successful attempt to make invariants of algebraic topology computable in practice in various contexts. A prominent example being to  study data sets and their topology, which have become increasingly important in many area of sciences. 
In particular, to be able to discriminate and compare large data sets, it is natural  to associate invariants to each of them in order to be able to say if they  are similar and  describe similar phenomenon or not.  The latter operation is obtained by considering a metric on the invariants associated to the data which,  classically, is the interleaving or bottleneck distance on the  persistent homology of the data. The interested reader may consult \cite{Oudo15,Edel10} for an extended discussion of the theory and of its many applications.
Our goal is to study and compare several refinements of those distances obtained by considering more structure, inspired by homotopical algebra, on the persistent cohomology which discriminate more data sets. 
 
\subsection*{Topological data analysis}
Associating algebraic invariants to \emph{shapes} is a main apparatus of algebraic topology. \emph{Topological data analysis} (TDA for short) associates and studies the topology of data sets through the help of algebraic topology invariants characterizing as finely as possible the data. Roughly, a main idea of TDA is to associate to, a  potentially large, set ${X}$ of $N$ points a family of spaces  $X_\eps$ given by the union of balls centered on each point  with radius given by the parameter $\eps$.  Now we can consider the invariants of each space  but, even better, we can study the set $\{X_\eps\}$ as a continuous family of spaces, called a  \emph{persistent space}, and considering the evolution of these invariants when $\eps$ grows. The more accessible topological invariant is the homology of these spaces also known as \emph{persistent homology}.

\subsection*{Persistent homology }
The homology of  a persistent space gives us a parametrized family of graded vector space. To such object, one associates a \emph{barcode}, which represents the evolution  of the dimension of each homology group when the parameter varies. For instance, a $i^{\mathrm{th}}$-homology class can be born at the time $\eps_1$ in the $i$-th group and dies at  time $\eps_2$. This class is associated to a bar of length $\eps_2-\eps_1$ and the collection of those is the \emph{barcode} of the persistent homology groups.   
This barcode defines a invariant of the persistent space $\{X_\eps\}$. To compare two barcodes $B_X$ and $B_Y$ associated to datasets ${X}$ and ${Y}$, Cohen \emph{et al.} defined in \cite{cohen2007stability} the \emph{bottleneck distance} which is a (pseudo-)distance. A more intrinsic notion of distance, directly defined on the persistent (co)homology, is given by the  \emph{interleaving} distance introduced by Chazal \emph{et al.} in~\cite{chazal2014stability}. 

\subsection*{Applications}
These techniques of TDA have been applied in many areas: for example in reconstruction of shapes (see \cite{Towards,Shape}), gene expression analysis (see for example \cite{jeitziner2017two}), or in neurosciences (see \cite{kanari2018topological}). It should be noted that the bottleneck distance is only slightly or not sensitive to data noise.

\subsection*{Content of the paper}
Algebraic topologists have constructed several other invariants finer than homology of the space: for instance the cohomology has a natural graded \emph{algebra} structure induced by the cup product. This structure is itself a shadow of the differential graded algebra structure carried by the \emph{cochains} which is a better invariant, see \Cref{ex:borromean}. These refined invariants thus encode in a much more effective way the homotopy type of a space compared to mere homology and in fact, the homotopy of a (nilpotent finite type) space is completely encoded by this dg-algebra structure together with higher homotopies for its commutativity, i.e., its $\calE_\infty$-algebra structure~\cite{mandell2006cochains}. 
  
In this paper, we give the theoretical framework to construct and compare \emph{new interleaving distances} (on data sets) which take into account these extra algebraic structures in the persistence setting, in a \emph{systematic way}. In particular we exhibit a \emph{hierarchy} of such refined distances, see theorem~\ref{thm:maintheoremB}. We follow here the general principle of interleaving distance associated to persistent objects in any category $\sfC$ as defined by Bubenik \emph{et al.} in \cite{bubenik2014categorification}. Our distances refine the classical interleaving distance $\d_{\grVect}$ but are of different computational difficulties. In fact, we have the following commutative diagrams of functors: 
\begin{itemize}
	\item for $p$ a prime, we have
	\[
	\begin{tikzcd}
		& \Top^\op
		\ar[dl,"{\rmH^*(-,\bbF_p)}" description, bend right]
		\ar[d] \ar[dr, bend left]
		& \\
		\calA_p\mbox{-}\Alg \ar[r,"\mathrm{forget}"'] & \AsAlg \ar[r,"\mathrm{forget}"'] & \grVect
	\end{tikzcd}
	\]
	where $\calA_p\mbox{-}\Alg$ is the category of algebras over the Steenrod algebra $\calA_p$~see \Cref{sec:Steenrod_distance};
	\item denoting $\ho(\AsAlg)$ the derived category of dg-algebras (\ref{nota:hocat}),  we have
	\[
	\begin{tikzcd}
		& \Top^\op 
		\ar[dl,"{\rmC^*(-,k)}" description, bend right]
		\ar[d] \ar[dr, bend left]
		& \\
		\ho(\AsAlg) \ar[r,"\rmH^*(-)"'] & \AsAlg \ar[r,"\mathrm{forget}"'] & \grVect .
	\end{tikzcd}
	\]
\end{itemize}
\emph{Each} of the categories in the lines of the diagram \emph{gives rise to an interleaving distance}.

Also, several of the more refined interleaving distances introduced above are computed in the homotopy categories of cochains  with some extra structure and not a cohomological level for which we have the barcode decomposition. The homotopy category of such cochain algebra is hard to study and cochain algebras are too big to be easily used on a computer at the moment.

We bypass this problem by using the \emph{homotopy transfer theorem} for $\calA_{\infty}$-algebras  which allows to encode the cochain algebra on the cohomology groups \emph{without losing information}. Indeed, an $\calA_\infty$-algebra is an associative algebra up to homotopy (see \Cref{def:Ainfty}) and, for all topological spaces $X$, the singular cochain complex of $X$ is equivalent of the cohomology of $X$ as $\calA_\infty$-algebras (see \Cref{thm::HTT}). Further, unlike quasi-isomorphisms of dg-algebras, $\calA_\infty$-quasi-isomorphisms have inverses which simplifies greatly the study of interleaving in these category. However transfer theorems are not very functorial and therefore it is unlikely that they can be applied to abstract persistence spaces in general (see \Cref{R:notransfer}).  

Nevertheless, for persistent simplicial sets $X\colon \sfR \to \sSet$ satisfying  some  mild finiteness assumption (that we call \emph{finite filtered data}, see \Cref{def:ffdata}, this finiteness assumption being satisfied by Vietoris-Rips, alpha and  \v{C}ech complexes associated to any finite set of points), we prove the following theorem.

\begin{maintheorem}[$\calA_\infty$ interleaving distance (see  \Cref{prop:functor_to_Ainfty})]
There exists a persistent $\calA_\infty$-structure on the persistent cohomology of finite filtered data and an interleaving distance $\d_{\calA_\infty}$, which refine the cohomology algebra and takes into  account higher Massey products of singular cohomology.
\end{maintheorem}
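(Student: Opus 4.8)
The plan is to transfer the differential graded algebra structure of singular cochains onto cohomology pointwise, to promote this to a persistent object by exploiting the finiteness of the data together with the invertibility of $\calA_\infty$-quasi-isomorphisms, and then to read off both the distance and its refinement properties formally.

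First I would use the hypothesis of \Cref{def:ffdata} to reduce to a finite situation: for finite filtered data the persistent simplicial set $X\colon\sfR\to\sSet$ is locally constant away from finitely many critical parameters $\eps_1<\cdots<\eps_n$, so it is determined, up to reindexing, by the finite diagram $X_{\eps_1}\to\cdots\to X_{\eps_n}$ of finite simplicial sets. Applying singular cochains $\rmC^*(-,k)$, which is contravariant, yields a persistent differential graded algebra $\sfR^{\op}\to\AsAlg$ whose terms are degreewise finite dimensional and whose transition maps are identities between consecutive critical values.

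Next I would apply the homotopy transfer theorem of \Cref{thm::HTT} at each critical value: after choosing a homotopy retract of $\rmC^*(X_{\eps_i},k)$ onto its cohomology, one obtains a minimal $\calA_\infty$-structure on $\rmH^*(X_{\eps_i},k)$ together with an $\calA_\infty$-quasi-isomorphism $\rmH^*(X_{\eps_i},k)\xrightarrow{\sim}\rmC^*(X_{\eps_i},k)$. The step I expect to be the main obstacle is \emph{functoriality}: the retract data are not canonical, so the transferred structures do not assemble into a functor on the nose, which is precisely the failure recorded in \Cref{R:notransfer}. I would resolve this using two ingredients. Finiteness guarantees that only finitely many retracts need to be chosen, fixed once and for all at the critical values; and $\calA_\infty$-quasi-isomorphisms admit quasi-inverses, so conjugating each transition map $\rmC^*(X_{\eps_{i+1}})\to\rmC^*(X_{\eps_i})$ by the chosen quasi-isomorphisms produces $\calA_\infty$-morphisms between the cohomologies. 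The point is that the intermediate quasi-inverse--quasi-isomorphism pairs cancel \emph{strictly} in the homotopy category $\ho(\AinfAlg)$, so the assignment $\eps\mapsto\rmH^*(X_\eps,k)$ becomes an honest functor $\sfR^{\op}\to\ho(\AinfAlg)$ lifting persistent cohomology; this is the persistent $\calA_\infty$-structure asserted, and is the content of \Cref{prop:functor_to_Ainfty}.

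With this persistent $\calA_\infty$-object at hand, the distance $\d_{\calA_\infty}$ is defined by the categorical interleaving recipe of \cite{bubenik2014categorification}: $\d_{\calA_\infty}(X,Y)$ is the infimum of the $\delta$ for which there exist $\delta$-shifted $\calA_\infty$-morphisms $\rmH^*(X_\eps)\to\rmH^*(Y_{\eps+\delta})$ and back, natural in $\eps$, whose composites recover the $2\delta$-transition maps. Refinement is then formal: the forgetful functors $\ho(\AinfAlg)\to\AsAlg\to\grVect$ commute with the shift functors, hence carry a $\delta$-interleaving to a $\delta$-interleaving, giving $\d_{\grVect}\le\d_{\AsAlg}\le\d_{\calA_\infty}$. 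Finally, to confirm that $\d_{\calA_\infty}$ sees higher Massey products I would note that the transferred operation $m_3$, and its higher analogues, compute the triple and higher Massey products on $\rmH^*(X_\eps,k)$; a filtered space whose cup product vanishes but whose Massey products do not, such as the one of \Cref{ex:borromean}, therefore carries a nontrivial persistent $\calA_\infty$-structure that is invisible to $\d_{\AsAlg}$, showing the refinement is effective.
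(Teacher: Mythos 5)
Your proposal is correct and follows essentially the same route as the paper: restrict to finite filtered data, choose transfer data at the finitely many critical values, define the transition maps on cohomology by conjugating the cochain-level structure maps with the $\infty$-quasi-isomorphisms $i$ and $p$ from \Cref{thm::HTT}, and check functoriality in $\ho(\infty\textsf{-}\Alg_{\calA_\infty})$ using that $p\circ i$ is the identity there --- which is precisely the proof of \Cref{prop:functor_to_Ainfty}. The only cosmetic difference is that the paper works with finite simplicial (delta-complex) cochains and an explicit algorithmic choice of contractions rather than singular cochains, but this does not affect the argument.
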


Some approaches to use $\calA_\infty$-structures for persistence have already been considered in the literature, notably in the work of F. Belch\'i \emph{et al.} (see \cite{belchi2015,belchi2015,belchi2017,belchi2019}) and Herscovich (cf. \cite{Herscovich2018}). In both cases, they consider transferred structures but do not consider the full either $\calA_\infty$ or persistent structure  and in particular do not define an associated interleaving distance.

The homology of a space is sensitive to the characteristic of its coefficient and this reflects on the additional algebraic structure of the cochains. Therefore, we define (see \Cref{sec:Steenrod_distance}) two new distances: the first $\d_{p_\infty}$ given by the maximum between the distance defined by the structure of Steenrod module and the $A_\infty$-structure of the cohomology with coefficient in $\bbF_p$ (\Cref{sec:Steenrod_distance}) where $p$ is a fixed prime or $0$ (with the notation $\bbF_0=\Q$) and where there is no Steenrod structure; the second $\d_{\mathbb{P}}$ is given by the supremum of $\d_{\calA_{p_\infty}}$ over the set $\mathbb{P}=\{0,p \mbox{ prime}\}$~. Our second main contribution is the comparison of these distances. 

\begin{maintheorem}[see \Cref{sec::resume}]\label{thm:maintheoremB}
	All distances defined for finite filtered data in this paper satisfy the following inequalities:
	\[
	\begin{tikzcd}[row sep = small,  column sep = large]
	&&&
	\d_{\calA_{p}-\As}
	\ar[rd,phantom, "\geqslant",sloped, description]
	\ar[rd,phantom, "_{(\ref{item:4})}", shift right=2.2ex]
	&& \\
	\d_{\mathbb{P}}
	\ar[r,phantom, "\geqslant" sloped, description] 
	&
	\d_{p_\infty,q_\infty}
	\ar[r,phantom, "\geqslant" sloped, description] 
	&
	\d_{\calA_{p_\infty}} 
	\ar[rd,phantom, "\geqslant" sloped, description] 
	\ar[rd,phantom, "_{(\ref{item:7})}", shift right=2ex]
	\ar[ru,phantom, "\geqslant" sloped, description] 
	\ar[ru,phantom, "_{(\ref{item:5})}", shift right=2ex]
	&~&
	\d_{\As,\bbF_p} 
	\ar[r,phantom, "\geqslant" sloped, description] 
	\ar[r,phantom, "_{(\ref{item:1})}", shift right=2ex]
	& \d_{\grVect,\bbF_p} \\
	&&&
	\d_{\calA_\infty,\bbF_p} 
	\ar[ru,phantom, "\geqslant" sloped, description] 
	\ar[ru,phantom, "_{(\ref{item:5})}", shift right=2ex]
	&&
	\end{tikzcd} \ .
	\]
	which are not equalities in general.
\end{maintheorem}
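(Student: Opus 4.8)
The plan is to separate the statement into the chain of inequalities and the assertion of strictness, treating them by completely different methods. For the inequalities I would use a single structural fact underlying the functoriality of the interleaving distance in the sense of \cite{bubenik2014categorification}: if $F\colon \sfC\to \sfD$ carries $\eps$-interleavings to $\eps$-interleavings — which holds for any strict forgetful functor — then the induced map on persistent objects cannot increase the distance, so $\d_\sfC\geqslant \d_\sfD$. Concretely, an $\eps$-interleaving of $X$ and $Y$ in $\sfC$ consists of shifted morphisms whose composites are the structural shift maps; applying $F$ yields such data in $\sfD$, so the infimum defining $\d_\sfD$ is taken over a larger family and is therefore no larger.

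First I would match each labelled edge with the corresponding forgetful functor. Edge $(\ref{item:1})$ is the functor $\AsAlg\to\grVect$ forgetting the product; edge $(\ref{item:4})$ forgets the Steenrod action on an associative algebra; the two edges labelled $(\ref{item:5})$ are both instances of forgetting the higher operations $m_n$, $n\geqslant 3$, of an $\calA_\infty$-algebra (once from $\d_{\calA_{p_\infty}}$ down to the Steenrod-associative level, once from $\d_{\calA_\infty,\bbF_p}$ to $\d_{\As,\bbF_p}$); and edge $(\ref{item:7})$ forgets the Steenrod action while retaining the full $\calA_\infty$-structure. Each is a strict functor, so preservation of interleavings is immediate and the inequality follows with no computation. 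The two leftmost inequalities are of a different character, since $\d_{\mathbb{P}}$ and $\d_{p_\infty,q_\infty}$ are defined as suprema of the prime-indexed distances $\d_{\calA_{\ell_\infty}}$; here I would simply invoke monotonicity of the supremum in its index set: as $\{p,q\}\subseteq\mathbb{P}$, one has $\d_{\mathbb{P}}=\sup_{\ell\in\mathbb{P}}\d_{\calA_{\ell_\infty}}\geqslant\max(\d_{\calA_{p_\infty}},\d_{\calA_{q_\infty}})=\d_{p_\infty,q_\infty}\geqslant \d_{\calA_{p_\infty}}$, the final step discarding the $q$-summand.

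The real content is the last clause, that no inequality is an equality in general, and this is where the work lies: for each edge I must exhibit finite filtered data whose two distances genuinely differ. I would realize classical witnesses of each algebraic structure as persistent thickenings — concretely, as Vietoris--Rips or \v{C}ech complexes of explicit finite samplings, so that the data indeed falls under the finiteness hypothesis. To separate $\d_{\As,\bbF_p}$ from $\d_{\grVect,\bbF_p}$ I would compare models of $S^2\times S^2$ and $S^2\vee S^2\vee S^4$, which share their persistent cohomology but are told apart by the cup product. To separate $\d_{\calA_\infty,\bbF_p}$ from $\d_{\As,\bbF_p}$ I would use a thickening of the Borromean rings complement (\Cref{ex:borromean}), whose cup products vanish but whose triple Massey product is nonzero and hence detected by the transferred $\calA_\infty$-structure of \Cref{thm::HTT}. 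For the Steenrod edges, and to separate the prime-indexed levels feeding $\d_{\mathbb{P}}$, I would use $\RP$ and its suspensions, where $\Sq^1$ acts nontrivially and distinguishes the characteristic.

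In every case the inequality direction is automatic, so the task is to produce a genuine \emph{lower bound} on the finer distance: one shows that the relevant operation — cup product, Massey product, or Steenrod square — persists over an interval of positive length, and that any interleaving in the finer category must then have parameter bounded below by (half of) this length, whereas the coarser structure admits a strictly smaller interleaving. I expect this lower bound to be the main obstacle, because unlike $\d_{\grVect,\bbF_p}$ the finer distances are not read off from a barcode decomposition; the argument must instead track the persistent higher operation directly and verify that no shifted pair of morphisms in the finer category can kill it below the claimed threshold. Once one such example is produced per edge, rescaling the underlying point sets yields families realizing arbitrarily large gaps, so the inequalities are strict in the strongest sense.
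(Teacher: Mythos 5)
Your proposal follows essentially the same route as the paper: the inequalities are obtained exactly as in \Cref{lem::apply_functor} by applying the relevant forgetful functors to interleavings (plus monotonicity of the supremum for $\d_{\mathbb{P}}$ and $\d_{p_\infty,q_\infty}$), and strictness is established by realizing classical pairs of spaces with identical coarse but distinct fine invariants as \v{C}ech/Rips complexes of finite samplings, with the lower bound extracted from the persistence of the cup product, Massey product, or Steenrod operation over an interval of positive length. Your witness spaces differ slightly from the paper's (which uses the torus versus a sphere wedged with circles, the Borromean complement, and $\Sigma\bbC\mathrm{P}^2$ versus $S^3\vee S^5$), but they are of the same nature and the argument is the same.
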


We believe that these refined distances are reasonable approximation of the most refined of all, that is the interleaving distance associated to the $\calE_\infty$-structure of cochains, see~\Cref{sec:Einfty} or the even finer (but not algebraic) interleaving distance in the homotopy category of spaces studied by Blumberg and Lesnick~\cite{BL17}. Unlike this latter one, they are defined on the underlying  persistent \emph{cohomology groups} and therefore they seem much more \lq\lq{}computerizable\rq\rq{}. In particular, there are algorithms to compute the ${\calA_{2_\infty}}$-structure. 
In the category $\grVect$ of graded vector spaces,  the classical interleaving distance is easily computable because this distance is equal to the bottleneck distance by the \emph{isometry theorem} (see \cite[Theorem 3.1]{Oudo15}) which can be computed using matchings.
This is not yet the case for the new distances we define in this paper, 
but for some of them, one can use similar idea to compute explicit effective bounds for them (see \Cref{remark:computation_multiplicative_distance} and \Cref{remark:computation_Ainfty_distance}); see  \Cref{ex:torus_sphere}, \Cref{ex:torus_sphere_comput}, \Cref{ex:Borromeandiscret} and \Cref{ex:borromean_compute} for explicit or numerical examples.

Further, we prove refined stability theorems for those distances. Indeed all these distances satisfy a stability property for \v{C}ech or Vietoris Rips complexes (see \Cref{sec:CechRips}) with respect to the Gromov-Hausdorff distance.

\begin{maintheorem}[Stability results (see \Cref{thm:stabilite}, \Cref{thm:stability_Einfty_version})]
	Let $X$ and $Y$ be two finite set of points of $\R^n$. We have the following inequality:
	\begin{align*}
	\d_{\dagger}(\calR(X),\calR(Y)) \leqslant 2\dGH\left(X,Y\right) \ ; \\
	\d_{\dagger}(\Cech(X),\Cech(Y)) \leqslant 2\dGH\left(X,Y\right) \ ,
	\end{align*}
	where $\d_\dagger$ is one of the distances of \Cref{thm:maintheoremB}.
\end{maintheorem}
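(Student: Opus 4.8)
The plan is to realize the Gromov--Hausdorff distance through correspondences and to turn an almost optimal correspondence into an interleaving of persistent simplicial sets, which the structured cochain functors of the introduction then transport into an interleaving in each target category of \Cref{thm:maintheoremB}. Recall that $\dGH(X,Y) = \tfrac12 \inf_R \mathrm{dis}(R)$, where $R$ ranges over correspondences $R \subseteq X \times Y$ and $\mathrm{dis}(R) = \sup_{(x,y),(x',y') \in R} |d_X(x,x') - d_Y(y,y')|$. It therefore suffices to show that \emph{any} correspondence $R$ with $\mathrm{dis}(R) = \lambda$ yields $\d_\dagger(\calR(X),\calR(Y)) \leqslant \lambda$ in each relevant category; taking the infimum over $R$ then gives $\d_\dagger \leqslant \inf_R \mathrm{dis}(R) = 2\dGH(X,Y)$, and identically for $\Cech$.

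I would first build the interleaving at the level of persistent simplicial sets. Choosing for each $x \in X$ a point $\phi(x) \in Y$ with $(x,\phi(x)) \in R$, and symmetrically $\psi \colon Y \to X$, the distortion bound gives $d_Y(\phi(x),\phi(x')) \leqslant d_X(x,x') + \lambda$; hence $\phi$ sends a simplex of $\calR_\eps(X)$ (of diameter $\leqslant \eps$) to a simplex of $\calR_{\eps+\lambda}(Y)$ and defines a simplicial map $\phi \colon \calR_\eps(X) \to \calR_{\eps+\lambda}(Y)$, and likewise $\psi \colon \calR_\eps(Y) \to \calR_{\eps+\lambda}(X)$. A short estimate shows that for $\sigma \in \calR_\eps(X)$ the union $\sigma \cup \psi\phi(\sigma)$ has diameter $\leqslant \eps + 2\lambda$, so $\psi\phi$ is contiguous to the structure inclusion $\calR_\eps(X) \hookrightarrow \calR_{\eps+2\lambda}(X)$, and symmetrically for $\phi\psi$. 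Thus $(\phi,\psi)$ is a $\lambda$-interleaving in $\sSet$ up to simplicial homotopy. The same correspondence produces simplicial maps between the $\Cech$ complexes with the same shift (alternatively one invokes the standard Rips--Čech interleaving), giving the second inequality.

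It then remains to apply the structured cochain functors and check that the interleaving triangles commute in each target. Because the Alexander--Whitney cup product and the Steenrod operations are \emph{natural} with respect to simplicial maps, each of $\phi,\psi$ induces a genuine morphism in $\AsAlg$ over $\bbF_p$ and in $\calA_p\mbox{-}\Alg$; since contiguous simplicial maps are simplicially homotopic, they induce equal maps on cohomology, the triangles commute strictly, and we obtain $\lambda$-interleavings for $\d_{\As,\bbF_p}$, $\d_{\calA_{p}-\As}$ and $\d_{\grVect,\bbF_p}$. For the distance in $\ho(\AsAlg)$ one uses that $\rmC^*(-)$ carries weak equivalences of simplicial sets to quasi-isomorphisms of dg-algebras and hence descends to $\ho(\sSet)^\op \to \ho(\AsAlg)$; homotopic simplicial maps being equal in $\ho(\sSet)$, the triangles commute in $\ho(\AsAlg)$. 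The composite distances $\d_{\bbP}$, $\d_{p_\infty,q_\infty}$ and $\d_{\calA_{p_\infty}}$ are maxima and suprema of the previous ones, so the uniform bound $\lambda$ passes to them unchanged.

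The main obstacle is the $\calA_\infty$ (and, at the finest level, $\calE_\infty$) distance, since the persistent $\calA_\infty$-structure on cohomology is produced by homotopy transfer, which is \emph{not} functorial (\Cref{R:notransfer}): a simplicial map does not directly induce an $\calA_\infty$-morphism of the transferred structures. I would circumvent this exactly as in the construction of $\d_{\calA_\infty,\bbF_p}$ for finite filtered data (\Cref{def:ffdata}, \Cref{prop:functor_to_Ainfty}): the cochain-level functor is strictly natural, so $\phi,\psi$ already yield a $\lambda$-interleaving of persistent cochain $\calA_\infty$-algebras, which one transports to the cohomology model along the persistent $\calA_\infty$-quasi-isomorphism of \Cref{thm::HTT}, using the crucial fact that $\calA_\infty$-quasi-isomorphisms are \emph{invertible}. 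This invertibility guarantees both that the transferred interleaving is again a $\lambda$-interleaving and that the cochain- and cohomology-level distances coincide, so no information is lost; the same scheme with $\calE_\infty$ in place of $\calA_\infty$ handles the distance of \Cref{sec:Einfty}. Assembling the per-category bounds gives $\d_\dagger(\calR(X),\calR(Y)) \leqslant 2\dGH(X,Y)$ for every $\dagger$, and identically for $\Cech$.
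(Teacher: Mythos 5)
Your proposal is correct and follows essentially the same route as the paper: an almost-optimal correspondence is shown to be $\eps$-simplicial for the Rips and \v{C}ech filtrations, subordinate maps give contiguous (hence homotopic) simplicial maps whose images under the natural cochain functors produce interleavings in each structured category, and the $\calA_\infty$ (resp.\ $\calE_\infty$) case is reduced to the $\ho(\AsAlg)$ case via the invertibility of $\infty$-quasi-isomorphisms, exactly as in \Cref{prop:inequalities_Ainfty}. The only cosmetic difference is that you verify contiguity of $\psi\circ\phi$ with the structure inclusion by a direct diameter estimate, whereas the paper deduces it from both maps being subordinate to the composite correspondence $C^{T}\circ C$; (note only that your parenthetical alternative of invoking the standard Rips--\v{C}ech interleaving would degrade the constant, so the direct \v{C}ech argument you give first is the one to keep).
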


Such a theorem is very important for applications since, for set of points $X$ and $Y$ representing the same data up to some noise, this theorem implies that if the noise if small then the distance $	\d_{\dagger}(\calR(X),\calR(Y))$ is also small. 

\begin{remark*} 
	This work is a first theoretical step towards new distances taking into account more topological information in  Topological Data Analysis. For the moment, many of these distances are hard to compute. 
	But, one can use it to distinguish some data which are close with respect to the standard interleaving distance. See~\Cref{ex:torus_sphere}, \Cref{Prop:torusvswedge}, \Cref{ex:Borromeandiscret} for instance. 
	In future work we plan to study how to define bottleneck distances taking the multiplicative structure(s) into account, in order to get more computer friendly distances. The existence of a bottleneck distance in the derived category of sheaves~\cite{Berk18} is an evidence for those.  
\end{remark*}

\setcounter{tocdepth}{1}
\tableofcontents

\subsection*{Notations}
We introduce some notations used throughout the paper:

\begin{itemize}
	\item $k$ is a field unless otherwise specified; 
	
	\item If $\sfC$, $\sfD$ are categories, $\sfD^\sfC$ stands for the category of functors $\sfC\to \sfD$\ ;
	
	\item $(\sfR,\leqslant)$ is the poset of real numbers $(\R,\leqslant)$,  viewed as a category. For all $r<s$ in $\R$, the unique corresponding morphism in the category $\sfR$ is denoted by $(r\leqslant s)$ ; $\sfR^\op$ corresponds to the poset $(\R,\geqslant)$ viewed as a category: we denote by $(r\geqslant s)$ its morphisms. Functors $X :\sfR\to  \sfC$ and $Y :\sfR^\op \to  \sfC$ are denoted by $X_\bullet$  and $Y^\bullet$ respectively;
	
	\item $\grVect$ stands for the category of graded vector space;  $\Ch_k$ the category of $\Z$-graded chain complexes and  $\coCh_k$ the category of $\Z$-graded cochain complexes. When we needed, we will denote the degree of objects in $\Ch_k$  (resp. $\coCh_k$)  by a lower index $C_*$ (resp. an upper index $C^*$).
	
	\item $\Alg_\calO$ is the  category of $\calO$-algebras in the category of cochain complex over $k$, with $\calO$ an operad (see \cite[Chapter 5]{LV2012}) (and strict morphisms);
	
	\item  when we define an interleaving distance $\d_{\dagger}$ by using a functor of cohomology $\rmH^*:\Top^\op \to \sfC $, we denote it by $\d_{\sfC}(-_1,-_2)\coloneqq \d_{\sfC}(\rmH^*(-_1),\rmH^*(-_2))$.
	
	\item we denote by $\For$, "the" forgetfull functor (which occurs in several  contexts).
\end{itemize}

\section{Multiplicative distances}\label{sect:distance_As}

\subsection{Persistent objects and interleaving}
In this section, we recall the notion of persistent objects and interleaving distances in generic categories. 

\subsubsection{Definitions}

\begin{definition}[Persistent object -- Shifting]
	Consider a category $\sfC$. A \emph{persistent object} in $\sfC$ is a functor $F_\bullet:\sfR\rightarrow  \sfC$. The image $F_{r\leqslant s}$ of the morphism $(r\leqslant s)$ by the functor $F$ is called \emph{a structural morphism of $F$}. Let $\eps$ be an object of $\sfR$, we have the natural  shifting operation $(-)[\eps]$ which sends a persistent object $F_\bullet$ to $F[\eps]_\bullet$ which is defined, for all $r$ in $\sfR$, by $F[\eps]_r\coloneqq F_{r+\eps}$. For each $\eps$ in $\sfR$, we have the natural transformation $\eta_\eps: F  \to F[\eps]$, given, for all $r$ in $\sfR$, by the structural morphism   $(r\leqslant r+\eps)$.
\end{definition}

\begin{definition}[Copersistent object -- Shifting]
	A \emph{copersistent object} in $\sfC$ is a functor $F^\bullet:\sfR^\op\to \sfC$. Let $\eps$ be an object of $\sfR$, we have the natural shifting $(-)[\eps]$ defined, for all copersistent object $F^\bullet$ and all $r$ in $\sfR$, by $F[\eps]^r=F^{r-\eps}$. We also have the natural transformation $\eta_\eps: F  \to F[\eps]$, given, for all $r$ in $\sfR$, by the structural morphism  $(r\geqslant r-\eps)$.
\end{definition}

\begin{definition}[$\eps$-morphism]
	Let $F$ and $G$ be two (co)persistent object in the category $\sfC$, and $\eps$ in $\R$. A morphism $F\to G$ is a natural transformation and a \emph{$\eps$-morphism from $F$ to $G$} is a natural transformation $F\to G[\eps]$.
\end{definition}

\begin{definition}[Interleaving pseudo-distance]\label{def:interleaving}
	Let $F$ and $G$ be two functor in $\sfC^\sfR$ and let $\eps$ be in $\sfR$. The persistent objects $F$ and $G$ are $\eps$\emph{-interleaved} if there exists two $\eps$-morphisms
	\[
		\mu : F \longrightarrow G[\eps] 
		\qquad \mbox{and} \qquad
		\nu : G \longrightarrow F[\eps]
	\]
	such that, the following diagram  commutes
	\[
		\begin{tikzcd}
			F \ar[rr,"\eta_{2\eps}"] \ar[rd,"\mu"'] & & F[2\eps] \ar[rd,"\mu{[2\eps]}"] &\\
			& G[\eps] \ar[ru,"\nu{[\eps]}" description] \ar[rr,"\eta_{2\eps}{[\eps]}"'] & & G[3\eps]
		\end{tikzcd}
	\]
	
	We define the \emph{interleaving (pseudo-)distance} by:
	\[
		\d_{\sfC}(F,G)\coloneqq \inf\left\{ \eps \geqslant 0 ~|~ F \mbox{ and } G \mbox{ are $\eps$-interleaved}\right\}.
	\]
\end{definition}

\begin{remark}
	The definitions of $\eps$-interleaving and distance of interleaving between copersistent objects are completely similar.
\end{remark}

\begin{remark}
	Given an interleaving distance $\d_\sfD$ for persistent objects in the category $\sfD$ and a functor $H:\sfC\to\sfD$, then we obtain a (pseudo-)distance defined by  \[\d_\sfD(H(-_1),H(-_2))\]  for persistent objects in $\sfC$.
\end{remark}

\begin{remark}[About our notations for the distances]\label{rem:notation_distance}
	In this paper, we introduce several distances between persistent spaces. We make the following choice of notations: when such a distance is defined by using a functor of cohomology $\rmH^*:\Top^\op \to \sfC $, we decide to denote it as follows
	\[
		\d_{\dagger}(-_1,-_2)\coloneqq \d_{\sfC}(\rmH^*(-_1),\rmH^*(-_2))
	\]
	with a suitable choice of notation to replace the symbol $\dagger$. Furthermore, whenever we use (co)chain complexes of (co)homology functors to compute the distance, the result depends on the base ground field of coefficients. But we will usually \emph{not} include it in the notation. However, when there might be some confusion or we want to put emphasis on the correct coefficient we will write 
	\[
		\d_{\dagger, k}(-_1,-_2) 
	\] 
	for the distance computed with coefficients being $k$.
\end{remark}

We will often use tacitly the following easy lemma.

\begin{lemma}\label{lem::apply_functor}
	Let $F$ and $G$ be two (co)persistent objects in $\sfC$ and let $H:\sfC \to \sfD$ be a functor. We have
	\[
	\d_{\sfD}(H F,H G)\leqslant \d_{\sfC}(F,G).
	\]
\end{lemma}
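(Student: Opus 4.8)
The plan is to unwind the definitions and reduce everything to functoriality of $H$ applied to the interleaving data. The statement is that applying a functor can only decrease (or preserve) the interleaving distance, which morally holds because a functor transports any interleaving of $F$ and $G$ into an interleaving of $HF$ and $HG$ at the same value of $\eps$, so the infimum defining $\d_{\sfD}(HF,HG)$ is taken over a set containing (the image of) the one defining $\d_{\sfC}(F,G)$.

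First I would fix $\eps \geqslant 0$ and assume that $F$ and $G$ are $\eps$-interleaved in $\sfC$, with interleaving morphisms $\mu\colon F\to G[\eps]$ and $\nu\colon G\to F[\eps]$ as in \Cref{def:interleaving}. The key observation is that $H$ commutes with the shifting operation in the sense that $H(F[\eps])=(HF)[\eps]$ and that $H$ sends the natural transformation $\eta_\eps$ for $F$ to the corresponding $\eta_\eps$ for $HF$; this is immediate since $F[\eps]_r=F_{r+\eps}$ is defined levelwise and $\eta_\eps$ is built from the structural morphisms $(r\leqslant r+\eps)$, which $H$ simply carries to the structural morphisms of $HF$. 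Consequently $H\mu\colon HF\to (HG)[\eps]$ and $H\nu\colon HG\to (HF)[\eps]$ are $\eps$-morphisms in $\sfD$.

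Next I would check that these transported morphisms still form an $\eps$-interleaving, i.e.\ that the hexagonal diagram in \Cref{def:interleaving} commutes for $HF$ and $HG$. Since $H$ is a functor it preserves composition and identities, so applying $H$ to the commuting triangle-hexagon for $F,G$ yields exactly the required commuting diagram for $HF,HG$ after the identifications $H(\eta_{2\eps})=\eta_{2\eps}$ and $H(\mu[2\eps])=(H\mu)[2\eps]$, etc. Hence $HF$ and $HG$ are $\eps$-interleaved in $\sfD$ whenever $F$ and $G$ are $\eps$-interleaved in $\sfC$.

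Finally I would conclude by comparing infima: the set $\{\eps\geqslant 0 \mid F,G \text{ are }\eps\text{-interleaved in }\sfC\}$ is contained in $\{\eps\geqslant 0 \mid HF,HG \text{ are }\eps\text{-interleaved in }\sfD\}$, and taking infima reverses the inclusion to give $\d_{\sfD}(HF,HG)\leqslant \d_{\sfC}(F,G)$. There is no real obstacle here; the only point requiring a line of care is the verification that $H$ genuinely intertwines the shifting functor and the transformations $\eta_\eps$ with those on the target side, which is the heart of why this is stated as an \lq\lq easy lemma\rq\rq{} used tacitly throughout.
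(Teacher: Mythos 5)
Your proof is correct and follows essentially the same route as the paper's: apply the functor $H$ to an $\eps$-interleaving, note that functoriality preserves the commuting diagram, and compare infima. Your version is in fact slightly more careful, since you compare the two sets of interleaving parameters before taking infima rather than assuming an $\eps$-interleaving exists whenever $\d_{\sfC}(F,G)\leqslant\eps$.
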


\begin{proof}
	Let $\eps \geqslant 0$ such that $\d_{\sfC}(F,G) \leqslant \eps$, then there exists an $\eps$-interleaving between $F$ and $G$. As $H$ is a functor, it preserves the diagram of $\eps$-interleaving, so $\d_{\sfD}(H F,H G)\leqslant \eps$.
\end{proof} 

This lemma implies readily stability for persistent spaces constructed out of a Morse-type function stability thanks to the following initial stability result:

\begin{theorem}[Stability]\label{thm:stability1}
	Let $X$ be a topological space and let $f,g:X \to \R$ be two continuous maps. Denote 
	\[
	\begin{array}{rcl}
	X^f_\bullet : \sfR & \longrightarrow & \Top \\
	r & \longmapsto & f^{-1}\left( (-\infty,r] )\right)
	\end{array}
	\quad \mbox{ and } \quad 
	\begin{array}{rcl}
	X^g_\bullet : \sfR & \longrightarrow & \Top \\
	r & \longmapsto & g^{-1}\left( (-\infty,r] )\right)
	\end{array}
	\]
	the two filtered spaces associated to $f$ and $g$. We have
	\[
		\d_{\Top}(X^f_\bullet,X^g_\bullet) \leqslant \|f-g\|_\infty .
	\]
\end{theorem}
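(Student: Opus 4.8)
The statement to prove is Theorem \ref{thm:stability1}: given $f, g : X \to \R$ continuous, the sublevel-set filtrations satisfy $\d_{\Top}(X^f_\bullet, X^g_\bullet) \leqslant \|f-g\|_\infty$. The plan is to set $\eps \coloneqq \|f-g\|_\infty$ (assuming it is finite; otherwise there is nothing to prove) and exhibit an explicit $\eps$-interleaving in the category $\Top$ directly at the level of the filtered spaces, before any (co)homology functor is applied.

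<br>

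\begin{proof}
Set $\eps \coloneqq \|f-g\|_\infty$ and assume $\eps < \infty$ (otherwise the inequality is trivial). The key observation is the pointwise estimate: for every $x \in X$, we have $|f(x) - g(x)| \leqslant \eps$, hence $g(x) \leqslant f(x) + \eps$ and $f(x) \leqslant g(x) + \eps$. Consequently, if $f(x) \leqslant r$ then $g(x) \leqslant r + \eps$, which gives the inclusion of sublevel sets
\[
	X^f_r = f^{-1}\left((-\infty, r]\right) \subseteq g^{-1}\left((-\infty, r+\eps]\right) = X^g_{r+\eps} = X^g[\eps]_r,
\]
and symmetrically $X^g_r \subseteq X^f_{r+\eps} = X^f[\eps]_r$.

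These inclusions define, for each $r \in \R$, the components of two natural transformations $\mu : X^f_\bullet \to X^g[\eps]_\bullet$ and $\nu : X^g_\bullet \to X^f[\eps]_\bullet$. Naturality is immediate since all maps involved are inclusions of subsets of $X$ and the structural morphisms of $X^f_\bullet$ and $X^g_\bullet$ are themselves inclusions; thus every relevant square commutes automatically because all composites equal the canonical inclusion of the smaller subset into the larger one inside $X$. The same reasoning shows that the $\eps$-interleaving diagram of \Cref{def:interleaving} commutes: both composites $X^f_\bullet \to X^f[2\eps]_\bullet$ agree with the structural inclusion $\eta_{2\eps}$, as every arrow in the diagram is an inclusion of subspaces of $X$ and inclusions into a common ambient space compose uniquely. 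Hence $X^f_\bullet$ and $X^g_\bullet$ are $\eps$-interleaved, and therefore
\[
	\d_{\Top}(X^f_\bullet, X^g_\bullet) \leqslant \eps = \|f-g\|_\infty. \qedhere
\]
\end{proof}

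<br>

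**On the difficulty.** This is a foundational and essentially formal result, so there is no deep obstacle: the entire content is the elementary sublevel-set inclusion coming from the sup-norm bound, together with the remark that in $\Top$ all the arrows are inclusions of subspaces of the fixed ambient space $X$, which makes every diagram commute strictly and for free. The only point that requires a moment's care — and the place a reader should check — is the strict commutativity of the interleaving square; but since inclusions of subsets into a common space $X$ compose uniquely, this verification is immediate rather than a genuine computation. Once this topological-level interleaving is established, \Cref{lem::apply_functor} propagates the same bound to any distance obtained by postcomposing with a (co)homology functor $\rmH^*$, which is how the result feeds into the stability theorems advertised in the introduction.
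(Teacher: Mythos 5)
Your proof is correct and follows essentially the same route as the paper's: both derive the sublevel-set inclusions $X^f_r \subseteq X^g_{r+\eps}$ and $X^g_r \subseteq X^f_{r+\eps}$ from the pointwise bound $|f(x)-g(x)|\leqslant\eps$ and observe that these inclusions assemble into an $\eps$-interleaving. Your additional remark that commutativity of the interleaving diagram is automatic because all arrows are inclusions into the ambient space $X$ is a welcome explicit check that the paper leaves implicit.
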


\begin{proof}
	 Let $\eps\geqslant 0$ such that $\|f-g\|_\infty\leqslant \eps$ and fix $r$ in $\sfR$. Let $x$ be a point in $X^f_r$: there exists $s \leqslant r$ such that $f(x)=s$. By assumption, we have $|g(x) - f(x)| \leqslant \eps$, then $g(x)\leqslant s+\eps$ so $x$ is in $X^g_{r+\eps}$. So we have $X^f_r \subseteq X^g_{r+\eps}$. By the argument, we have, for all $r$ in $\sfR$, the following inclusions
	\[
	X^f_r \subseteq X^g_{r+\eps} \subseteq X^f_{r+2\eps} 
	\mbox{ and }
	X^g_r \subseteq X^f_{r+\eps} \subseteq X^g_{r+2\eps} \ ,
	\]
	which define an $\eps$-interleaving between $X^f_\bullet$ and $X^g_\bullet$.
\end{proof}

\subsubsection{Two canonical examples of persistent spaces}\label{sec:CechRips}
Many interesting persistent objects arise from simplicial complexes/sets constructions. 
We denote by $\sSet$, the category of simplicial sets. A simplicial set has a canonical associated topological space, called its geometric realization. We denote
	\[
		|-|:\sSet \longrightarrow \Top 
	\]
the associated functor. It is the left adjoint of the \emph{singular simplicial complex functor} $\Sing:\Top \to \sSet$  which is defined, for $X$ a topological space, by $\Sing_n(X)\coloneqq \Hom_{\Top}(\Delta_{\Top}^n,X)$, where $\Delta_{\Top}^n$ is the $n$-th topological standard simplex. An important property is that they have the same homotopy theories (and the functors actually form a Quillen equivalence). In the rest of the paper, we often identify simplicial sets and topological spaces when defining (co)chains and (co)homology type functors/constructions.

\begin{definition}[Vietoris-Rips complex]\label{def:Rips}
	Let $X$ be a metric space. For $a$ in $\sfR$, we define a simplicial set $\calR_a(X,\d_X)$ on the vertex set $X$ by the following condition:
	\[
		\sigma \in \calR_a(X,\d_X) \Longleftrightarrow \d_X(x,y) \leqslant a \mbox{ for all } x,y \mbox{ in } \sigma.
	\]
    The collection of these simplices is the \emph{Vietoris-Rips filtered complex} of $X$ denoted
	\[
		\Rips(X,\d_X):\sfR\longrightarrow \sSet.
	\]
\end{definition}

\begin{definition}[Intrinsic \v{C}ech complex]\label{def:Cech}
	Let $X$ be a metric space. For $a$ in $\sfR$, we define a simplicial set $\Cech_a(X,\d_X)$ on the vertex set $X$ by the following condition:
	\[
		[x_0,x_1,\ldots,x_k] \in \Cech_a(X,\d_X) \Longleftrightarrow \bigcap_{i=0}^k \rmB(x_i,a)\ne \varnothing.
	\]
	We denote the \emph{intrinsic \v{C}ech filtered complex} of $X$ by
	\[
		\Cech(X,\d_X):\sfR\longrightarrow \sSet.
	\]
	For $a$, a real number, and $\sigma=[x_0,\ldots, x_n]$, a simplex of $\Cech_a(X,\d_X)$, an element $\bar{x}$ of $\cap_i \rmB(x_i,a)$ is called a \emph{$a$-center} of $\sigma$.
\end{definition}

\begin{figure}[h!]
	\begin{minipage}[c]{.46\linewidth}
		\centering
	\begin{tikzpicture}[scale=0.6]
	\pgfmathsetmacro{\RAD}{1pt};
	\coordinate (A) at (0.04,-0.35);
	\coordinate (B) at (1.7,0.4);
	\coordinate (C) at (0.1,1.5);
	\coordinate (D) at (2.1,3.7);
	\coordinate (E) at (2.9,2.4) ;
	\draw[fill=gray!20, opacity=0.2, thick] (A) circle (\RAD);
	\draw[fill=gray!20, opacity=0.2, thick] (B) circle (\RAD);
	\draw[fill=gray!20, opacity=0.2, thick] (C) circle (\RAD);
	\draw[fill=gray!20, opacity=0.2, thick] (D) circle (\RAD);
	\draw[fill=gray!20, opacity=0.2, thick] (E) circle (\RAD);
	\draw[thick] 
	(D) node {$\scriptstyle{\bullet}$} node[above] {$\scriptstyle{x_4}$} to 
	(E) node {$\scriptstyle{\bullet}$} node[below] {$\scriptstyle{x_5}$} ;
	\draw[thick, fill=gray!50, opacity=0.7] 
	(A) node {$\scriptstyle{\bullet}$} node[below] {$\scriptstyle{x_1}$} to 
	(B) node {$\scriptstyle{\bullet}$} node[below] {$\scriptstyle{x_2}$} to 
	(C) node {$\scriptstyle{\bullet}$} node[above] {$\scriptstyle{x_3}$} to (A);
	\end{tikzpicture}
	\caption{The Vietoris Rips complex of a set $X$}
	\label{fig:vietoris}
\end{minipage}
\begin{minipage}[c]{.46\linewidth}
	\centering
	\begin{tikzpicture}[scale=0.6]
	\pgfmathsetmacro{\RAD}{1pt};
	\coordinate (A) at (0.04,-0.35);
	\coordinate (B) at (1.7,0.4);
	\coordinate (C) at (0.1,1.5);
	\coordinate (D) at (2.1,3.7);
	\coordinate (E) at (2.9,2.4) ;
	\draw[fill=gray!20, opacity=0.2, thick] (A) circle (\RAD);
	\draw[fill=gray!20, opacity=0.2, thick] (B) circle (\RAD);
	\draw[fill=gray!20, opacity=0.2, thick] (C) circle (\RAD);
	\draw[fill=gray!20, opacity=0.2, thick] (D) circle (\RAD);
	\draw[fill=gray!20, opacity=0.2, thick] (E) circle (\RAD);
	\draw[thick] 
	(D) node {$\scriptstyle{\bullet}$} node[above] {$\scriptstyle{x_4}$} to 
	(E) node {$\scriptstyle{\bullet}$} node[below] {$\scriptstyle{x_5}$} ;
	\draw[thick] 
	(A) node {$\scriptstyle{\bullet}$} node[below] {$\scriptstyle{x_1}$} to 
	(B) node {$\scriptstyle{\bullet}$} node[below] {$\scriptstyle{x_2}$} to 
	(C) node {$\scriptstyle{\bullet}$} node[above] {$\scriptstyle{x_3}$} to (A);
	\end{tikzpicture}
	\caption{The \v{C}ech complex of the same set $X$}
	\label{fig:cech} 
\end{minipage}
\end{figure}

\begin{notation}\label{not:alphacomplex}
	For a finite set $X$ of points in $\R^d$, there exists another filtered complexe associated to $X$, called \emph{alpha complex}, which we denote by 
	\[
		\Alpha(X) \colon \sfR \longrightarrow \sSet \ . 
	\]
	We refer to \cite[Section III.4]{edelsbrunner2010computational} for the definition of this filtered complex, which is usefull for concrete computations with the python librairy  \texttt{Gudhi} (see \cite{gudhiweb} for the documentation). The alpha complex has some nice properties. We can compare the \v{C}ech complex and the alpha complex of $X$ as follow : for all $\eps> 0$, we have $\Alpha(X)_\eps \subseteq \Cech(X)_\eps$. Moreover, the Nerve Theorem implies that
	\[
		|\Alpha(X)_\eps|	 \simeq \bigcup_{x\in X} \rmB(x,\eps) \ .
	\]
\end{notation}

\subsection{Distances for persistent algebras}\label{subsect:distance_As} 
We start by reviewing of some classical objects in algebraic topology. 

\begin{defiprop}[Singular (co)chain functor]\label{def:cupproduct}
	The \emph{singular chain functor}, denoted by $\rmC_*^{\mathrm{Sing}}:\Top \to \Ch_k$,  is defined by $\rmC_*^{\mathrm{Sing}}(X)\coloneqq k[\Sing(X)]$, where the differential is given by the signed sum of the maps induced by the face maps. The \emph{singular cochain functor} $\rmC^*_{\As}:\Top^\op\rightarrow \AsAlg$ is defined, for all topological space $X$, by the cochain complex  $\rmC^*_{\As}(X)\coloneqq \Hom_{k}(\rmC_*^{\mathrm{Sing}}(X),k)$. Equipped with the \emph{cup product} defined as the composite
	\[
		\begin{tikzcd}
		-\cup-:\rmC^*_{\As}(X)\otimes \rmC^*_{\As}(X)
		\ar[r]
		& \rmC^*_{\As}(X\times X) \ar[r,"\Delta^*"]
		& \rmC^*_{\As}(X)
		\end{tikzcd}
	\]
	where the first map is the K\"unneth map and the second is the map induced by the diagonal, the singular cochain complex is a \emph{differential graded associative} algebra (dg-algebra for short).	
\end{defiprop}\

Therefore we have in particular a notion of persistent cochain complex and a notion of persistent \emph{dg-algebra} which are naturally induced by persistent spaces $X\colon \sfR \rightarrow \Top$. The natural notion of equivalences for these are given by persistent quasi-isomorphisms, that is natural transformations 
$A^* \to B^* $ which induced isomorphisms in cohomology; here $A^*, B^*$ are either persistent  dg-algebras $\sfR \to 
\AsAlg$ or persistent (co)chain complexes $\sfR \to 
\Ch$. Both chain complexes and dg-algebras have natural notions of homotopies and we can pass to their homotopy categories. The cohomology functors from dg-algebras (resp. cochain complexes) factors through the respective homotopy categories. 

\begin{definition}\label{nota:hocat} Let
\begin{itemize}
	\item $\ho(\AsAlg) \coloneqq \AsAlg [\mathrm{qiso}^{-1}]$ be the homotopy category of dg-associative algebras where the quasi-isomorphisms are formally inverted;
	
	\item $\ho(\Ch) \coloneqq \Ch  [\mathrm{qiso}^{-1}]$ be the homotopy category of cochain complexes where the quasi-isomorphisms are formally inverted.
\end{itemize}
 We denote $\rmC_{\ho(\As)}: \Top^{\op}\to \ho(\AsAlg)$, the composition of 
$\rmC_{\As}$ with the canonical quotient functor $\AsAlg \to \ho(\AsAlg)$.
\end{definition}

\begin{notation}\label{nota:cochainfunctor}
	We consider the following functors associated to spaces (and simplicial sets):
	 \begin{enumerate}
	 	\item $\rmC_{\ho(\Ch)}^*:\Top^\op\rightarrow \ho(\Ch_k)$ with $\rmC_{\ho(\Ch)}^* = \For\circ \rmC_{\ho(\As)}^*$;
	 	\item $\rmC_{\Ch}^*:\Top^\op\rightarrow \Ch_k$ with $\rmC_{\Ch}^* = \For\circ \rmC_{\As}^*$;
	 	\item  $\rmH^*_{\Com}:\Top^\op \rightarrow \ComAlg$; 
	 	\item $\rmH^*_{\As}:\Top^\op \rightarrow \AsAlg$ with $ \rmH^*_{\As} = \For\circ \rmH^*_{\Com}$ or $\rmH^*_{\As} = \H \circ \rmC_{\As}^*$;
 		\item $\rmH^*_{\grVect}:\Top^\op \rightarrow \grVect$ with $ \rmH^*_{\grVect} = \For\circ \rmH^*_{\As}$\ .
 	 \end{enumerate}
\end{notation}

We can summarize all these functors in the following commutative diagram:
\[
 	\begin{tikzcd}
 		\Top^\op \ar[dd,"\rmH_{\Com}"'] \ar[rd,"\rmC_{\ho(\As)}"] \ar[rrdd, bend right=15, dotted, "\rmH"']  \ar[rrd,bend left, "\rmC_{\ho(\Ch)}"]  \ar[rdd,bend right, "\rmH_{\As}"] & & \\
		&	\ho(\AsAlg) \ar[r,"\For"] \ar[d,"\rmH"] & \ho(\Ch_k) \ar[d,"\rmH"] \\
		\ComAlg \ar[r,"\For"'] &	\AsAlg \ar[r,"\For"'] & \grVect 
 	\end{tikzcd} \ .
\]
Associated to each of these functors,  we can define their interleaving distances 
according to~\Cref{def:interleaving}.

\begin{notation}
	Let $X:\sfR \rightarrow \Top$ and $Y:\sfR \rightarrow \Top$ be two persistent topological spaces. According to \Cref{rem:notation_distance}, some of these distance are denoted as follows:
	\[
		\d_{\grVect}(X,Y)  \coloneqq \d_{\grVect}(\rmH^*(X),\rmH^*(Y)) \ , \ \ 
		\d_{\As}(X,Y)  \coloneqq \d_{\AsAlg}(\rmH^*(X),\rmH^*(Y)) \ .
	\]
	Remark that the distance $\d_{\grVect}$ is the \emph{classical} interleaving distance (see \cite{Oudot}) (or more precisely the supremum of the interleaving distance associated to every degrees).
\end{notation}

\begin{remark}
Considering the homotopy category of persistent cochain complexes, that is objects of the type $\rmC^*_{\ho(\Ch)}(X)$, is an analogue of considering the derived category of sheaves over $\bbR$ which is a recent and promising nice approach to persistent homology, see~\cite{Kash18}. Indeed, the associated interleaving distance $\d_{\ho(\Ch_k)}$ has been considered in~\cite{BerPetit} and seen to agree with the convolution distance of~\cite{Kash18} for sheaves.
\end{remark}

\begin{remark}
We can of course also consider the interleaving distance on the  persistent cochain  complexes $\rmC^*$ of  persistent spaces $X, Y:\sfR \rightarrow \Top$ both taken in  \emph{dg-algebras}. This is however not a very pertinent notion to look at. Indeed, if $T$ is a triangulation of simplicial complex $X$, there is no natural  dg-algebra map $\rmC^*(X) \to \rmC^*(T)$  though they are homotopy equivalent simplicial complexes and, in fact,  have the same underlying space. 

In particular, given a persistent space and a persistent triangulation of it, $\rmC^*(X)$ and $\rmC^*(T)$ are not $\varepsilon$-interleaved in $\AsAlg$ for small $\varepsilon$ and therefore $\d_{\AsAlg}(\rmC^*(X), \rmC^*(T)) \gg 0$ in general event though they represent the same topological space. In particular this distance will not satisfy stability (as in  \Cref{thm:stabilite}). This problem of course disappear precisely in the homotopy category of algebras.

Note that, unlike for dg-algebras, over a field, one can always find a highly non-natural inverse for a cochain complex map  $\rmC^*(X) \to \rmC^*(T)$ for fixed spaces so that the situation looks better. But however, in general, one can not find such an inverse in a persistent way. The problem is similar to \Cref{R:notransfer}.
\end{remark}

These distances satisfy the following inequalities.

\begin{proposition}\label{prop:inequalities}
	Let $X:\sfR \rightarrow \Top$ and $Y:\sfR \rightarrow \Top$ be two persistent topological spaces. We have the following inequalities:
	\begin{align*} 
		\d_{\grVect}(X,Y) &
			 \leqslant  \d_{\As}(\rmH^*(X),\rmH^*(Y))
			 = \d_{\ComAlg}(\rmH^*(X),\rmH^*(Y)) \ , \\
		\d_{\grVect}(X,Y) &
			\leqslant  \d_{\ho(\Ch)}(\rmC^*(X),\rmC^*(Y)) 
			\leqslant  \d_{\ho(\AsAlg)}(\rmC^*(X),\rmC^*(Y)) \ , \\
		\d_{\grVect}(X,Y) &
			\leqslant \d_{\As}(X,Y)
			\leqslant  \d_{\ho(\AsAlg)}(\rmC^*(X),\rmC^*(Y)) \ .
	\end{align*} 
\end{proposition}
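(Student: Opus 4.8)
The plan is to establish each of the displayed inequalities by combining the general functoriality lemma (\Cref{lem::apply_functor}) with a small number of genuinely new observations about the specific functors in the commutative diagram. The guiding principle is that any functor lowers or preserves the interleaving distance, so every forgetful arrow and every cohomology arrow in the diagram yields a comparison. I would organize the proof by reading the three lines of the claimed inequality chain off the diagram of functors.

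First I would treat the top line. The inequality $\d_{\grVect}(X,Y) \leqslant \d_{\As}(\rmH^*(X),\rmH^*(Y))$ is immediate from \Cref{lem::apply_functor} applied to the forgetful functor $\For\colon \AsAlg \to \grVect$, together with the factorization $\rmH^*_{\grVect} = \For \circ \rmH^*_{\As}$ from \Cref{nota:cochainfunctor}. For the equality $\d_{\As} = \d_{\ComAlg}$ on cohomology, one direction is again \Cref{lem::apply_functor} applied to $\For\colon \ComAlg \to \AsAlg$ together with $\rmH^*_{\As} = \For\circ \rmH^*_{\Com}$. The reverse inequality is the only slightly subtle point here: I would argue that an $\eps$-interleaving of the \emph{commutative} graded algebras $\rmH^*(X)$ and $\rmH^*(Y)$ in $\AsAlg$ is automatically an interleaving in $\ComAlg$, because the structural morphisms and the interleaving morphisms $\mu,\nu$ are maps of graded-commutative algebras whenever their sources and targets are graded-commutative; being a morphism of commutative algebras is a \emph{property} of an associative-algebra morphism between commutative algebras, not extra data, so the same diagrams witness the interleaving in $\ComAlg$.

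Next I would handle the second line, concerning the cochain-level distances. The inequality $\d_{\ho(\Ch)}(\rmC^*(X),\rmC^*(Y)) \leqslant \d_{\ho(\AsAlg)}(\rmC^*(X),\rmC^*(Y))$ follows from \Cref{lem::apply_functor} applied to the forgetful functor $\For\colon \ho(\AsAlg) \to \ho(\Ch_k)$, using $\rmC^*_{\ho(\Ch)} = \For\circ \rmC^*_{\ho(\As)}$. The leftmost inequality $\d_{\grVect}(X,Y) \leqslant \d_{\ho(\Ch)}(\rmC^*(X),\rmC^*(Y))$ then follows by further applying the functor $\rmH\colon \ho(\Ch_k) \to \grVect$, again via \Cref{lem::apply_functor} and the commutativity of the large diagram which gives $\rmH^*_{\grVect} = \rmH \circ \rmC^*_{\ho(\Ch)}$. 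The third line is assembled from pieces already obtained: its left inequality repeats the top line, while the right inequality $\d_{\As}(X,Y) \leqslant \d_{\ho(\AsAlg)}(\rmC^*(X),\rmC^*(Y))$ comes from \Cref{lem::apply_functor} applied to $\rmH\colon \ho(\AsAlg) \to \AsAlg$ together with the identity $\rmH^*_{\As} = \rmH \circ \rmC^*_{\ho(\As)}$ read off the diagram.

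I expect the main obstacle to be the equality $\d_{\As} = \d_{\ComAlg}$ rather than any of the inequalities, since all inequalities reduce mechanically to \Cref{lem::apply_functor} once the correct factorization of functors is identified in the commutative diagram. The care required is to justify that passing to the homotopy categories is compatible with these forgetful functors at the level of $\eps$-interleavings, i.e.\ that $\For$ and $\rmH$ genuinely descend to functors between the relevant homotopy categories so that \Cref{lem::apply_functor} applies verbatim; this is guaranteed by the construction of \Cref{nota:hocat}, where the cohomology functor factors through the homotopy category and the forgetful functor sends quasi-isomorphisms to quasi-isomorphisms. Once this compatibility is noted, every line of the proposition is a direct consequence of functoriality and the commutativity of the diagram preceding \Cref{prop:inequalities}.
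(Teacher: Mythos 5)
Your proof is correct and follows essentially the same route as the paper, whose entire proof is a one-line appeal to \Cref{lem::apply_functor}; you simply spell out which factorization of functors from the commutative diagram is invoked for each inequality. Your explicit treatment of the reverse direction of the equality $\d_{\As}=\d_{\ComAlg}$ — noting that commutativity of a morphism between graded-commutative algebras is a property rather than extra data, so an $\eps$-interleaving in $\AsAlg$ between commutative algebras is already one in $\ComAlg$ — is the one step not literally covered by the lemma, and you handle it correctly where the paper leaves it implicit.
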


\begin{proof}
    It follows from \Cref{lem::apply_functor}.
\end{proof}
\begin{notation}
 The interleaving distance $\d_{\As}$ in the category of associative graded algebra will be called the graded algebra interleaving distance or sometimes the cup-product interleaving distance because of how it appears in our examples. 
\end{notation}

\begin{example}\label{ex:torus_sphere} In this example we study examples (already with low dimensional spaces) for which the standard bottleneck distance can be made as small as one wishes while their copersistent graded algebras interleaving distance remains as close as we wished to a fixed number.  

More precisely we exhibit two data sets $D_X$ and $D_Y$ in $\R^3$ such that the interleaving distance between the cohomology of their \v{C}ech complex as graded vector space is strictly smaller than their interleaving distance as copersistent graded algebras. Consider the following two topological spaces $X$ and $Y$ living in  $\R^3$:
\begin{align*}
	X & = \{(x,y,0) \ | \ x^2+(y-2)^2=1\} \cup \{x^2+y^2+z^2=1\} \cup \{(x,y,0) \ |\ x^2+(y+2)^2=1\} \ ; \\
	Y & = \{(x,y,z) \ | \ (x^2+y^2+z^2+3)^2 =16(x^2+y^2)\} \ ,
\end{align*}
which are represented in \Cref{fig:sphere_and_cirles} and \Cref{fig:torus} respectively.
\begin{figure}[h!]
	\begin{minipage}[c]{.46\linewidth}
	\centering
	\includegraphics[scale=1]{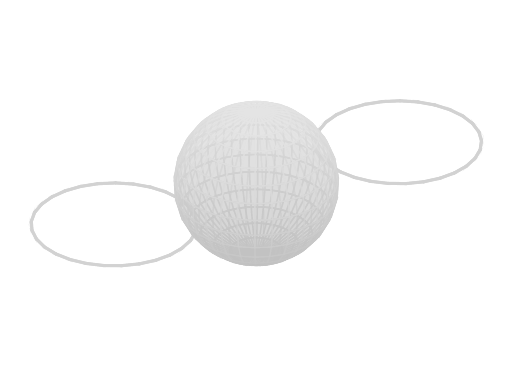}
	\caption{The space $X$}
	\label{fig:sphere_and_cirles}
	\end{minipage} 
	\hfill 
	\begin{minipage}[c]{.46\linewidth}
	\centering 
	\includegraphics[scale=0.7]{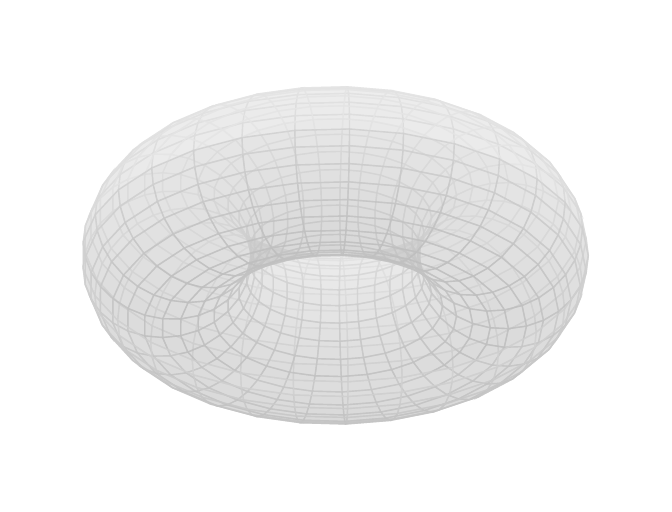}
	\caption{The space $Y$}
	\label{fig:torus}
	\end{minipage}
\end{figure} 

One can easily describe the cohomology algebra (with coefficient in the field $k$) of these two spaces (see \cite[Section 3.2]{Hatcher} for the description of computation of the cup product of the sphere and the torus). For the space $X$, we have 
\[
	\rmH^0(X) = k\cdot e, \quad \rmH^1(X)= k\cdot \alpha \oplus k\cdot \beta \quad \mbox{ and } \quad \rmH^2(X) = k\cdot \gamma
\]
where $\alpha$ and $\beta$ are the two classes induced by the two circles and $\gamma$ is the class induced by the $2$-sphere. It is clear that, by definition of the cup product, we have $\alpha \cup \beta = 0 = \beta \cup \alpha$, $e$ is the unit of the product and the other product are trivial by a degree argument. For the space $Y$, we also have 
\[
	\rmH^0(Y) = k\cdot e, \quad \rmH^1(Y)= k\cdot \alpha \oplus k\cdot \beta \quad \mbox{ and } \quad \rmH^2(Y) = k\cdot \gamma
\]
but the classes $\alpha$ and $\beta$ respectively representing meridian and parallel circles of the torus, and we have $\alpha \cup \beta = \gamma$
and $e$ is the unit of the cup product. Note that the graded vector spaces $\rmH^*(X)$ and $ \rmH^*(Y)$ are trivially isomorphic but graded commutative algebras $(\rmH^*(X),\cup)$ and $(\rmH^*(Y),\cup)$ are not because there is no injective graded algebra morphism from $\rmH^*(Y)$ to $\rmH^*(X)$.

For $\eps \geqslant 0$, we denote
\[
	X_\eps \coloneqq \bigcup_{x \in  X} B(x,\eps)
	\qquad \mbox{and} \qquad 
	Y_\eps \coloneqq \bigcup_{y \in  Y} B(y,\eps) \ .	
\] 
Now remark that, for $\eps < 1$, we have homotopy equivalences $X_\eps \simeq X$ and $Y_\eps \simeq Y$ and, for $\eps \geqslant 1$, $X_\eps$ and $Y_\eps$ are contractile. We fix $0< \alpha \ll   \frac{1}{2}$ and we choose a finite cover by open balls of radius $\alpha$ of $X_\alpha$ and $Y_\alpha$:
\[
	X_\alpha \subset \bigcup_{i \in  I} B(x_i,\alpha)
	\qquad \mbox{and} \qquad 
	Y_\alpha \subset \bigcup_{j \in  J} B(y_j,\alpha) 	
\]
where $I$ and $J$ are finite. We can then define the discrete spaces
\[
	D_X^\alpha = \bigcup_{i \in  I} \{x_i\}
	\qquad \mbox{and} \qquad 
	D_Y^\alpha = \bigcup_{j \in  J} \{y_j\}.
\]
We can think these spaces as noisy discretisations of our spaces $X$ and $Y$. Remark that, for $\alpha \leqslant \eps' < 1-\alpha$, we have 
\[
	\Cech(D_X^\alpha)_{\eps'} \simeq 	X_\alpha \simeq X
	\qquad \mbox{and} \qquad 
	\Cech(D_Y^\alpha)_{\eps'} \simeq 	Y_\alpha \simeq Y.
\]

\begin{figure}[h!]
	\begin{tikzpicture}
	\draw (0.7,0.4) -- (7.3,0.4);
	\draw (1,0.4) node {|} node[below=1.5] {$\scriptscriptstyle{0}$};
	\draw (2,0.4) node {|} node[below=1.5] {$\scriptscriptstyle{\alpha}$};
	\draw[fill,color=gray!10] (1,0.8) rectangle (2,2.3);
	\draw (1.5,1.5) node {\tiny{"noise"}};
	\draw (5,0.4) node {|} node[below=1.5] {$\scriptscriptstyle{1-\alpha}$};
	\draw (6,0.4) node {|} node[below=1.5] {$\scriptscriptstyle{1+\alpha}$};
	\draw[blue] (2,1.5) -- (5,1.5) ;
	\draw[fill,color=gray!10] (5,1.3) rectangle (6,2.3);
	\draw (5.5,1.8) node {\tiny{"noise"}};
	\draw[blue] (2,1.6) -- (5,1.6) node[above left] {$\scriptscriptstyle{\rmH^1}$};
	\draw[red] (2,2.1) -- (5,2.1) node[above left] {$\scriptscriptstyle{\rmH^2}$};
	\draw (2,1) -- (5,1) node[above left] {$\scriptscriptstyle{\rmH^0}$} (5,1) -- (7,1);
	\end{tikzpicture}
	\caption{A part of the barcode of $\rmH^*(\Cech(D_X^\alpha)_\bullet)$ and $\rmH^*(\Cech(D_Y^\alpha)_\bullet)$}
	\label{fig::cpxcercle}
\end{figure}

\begin{proposition}\label{Prop:torusvswedge}For any $0<\alpha <\frac{1}{2}$, one has
 \[
	\d_{\grVect}\left(\Cech(D_X^\alpha),\Cech(D_Y^\alpha)\right) \leqslant 2\alpha \ .
\]
while \[
	\frac{1-2\alpha}{2} \leqslant \d_{\As}\left( \Cech(D_X^\alpha),\Cech(D_Y^\alpha)\right)\leqslant \frac{1}{2} \ .
\]
\end{proposition}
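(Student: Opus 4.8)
The plan is to prove the three displayed bounds separately, working throughout with the description of the persistent cohomology already set up above. Write $A^\bullet \coloneqq \rmH^*(\Cech(D_X^\alpha))$ and $B^\bullet \coloneqq \rmH^*(\Cech(D_Y^\alpha))$, viewed as copersistent graded algebras. Two geometric inputs are needed: (i) on the \emph{stable range} $[\alpha,1-\alpha)$ one has algebra isomorphisms $A^r\cong \rmH^*(X)$ and $B^r\cong\rmH^*(Y)$, with every structural (restriction) map in that range an isomorphism; and (ii) the positive-degree cohomology $\rmH^{>0}$ of both $A^\bullet$ and $B^\bullet$ is concentrated in the interval $[\alpha,1+\alpha]$, while every object vanishes for negative parameter. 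Input (i) follows from the homotopy equivalences $\Cech(D_X^\alpha)_{\eps'}\simeq X$ recorded above together with the persistent nerve lemma; input (ii) follows from sandwiching $\bigcup_i \rmB(x_i,\eps')$ between $X_{\eps'-\alpha}$ and $X_{\eps'+\alpha}$ and using that $X_{\eps'}$ is contractible for $\eps'\geqslant 1$. For the bound on $\d_{\grVect}$ I would pass to barcodes and invoke the isometry theorem between the interleaving and bottleneck distances. Since $\rmH^*(X)\cong\rmH^*(Y)$ as graded vector spaces, the two barcodes have, in each degree, the same number of long bars spanning the stable range; matching these degreewise, the feature-births lie in a common window of width $\leqslant\alpha$ and the feature-deaths in $[1-\alpha,1+\alpha]$ of width $\leqslant 2\alpha$, so each matched pair differs in its endpoints by at most $2\alpha$, while the short $\rmH^0$ bars produced by the merging of sample points all lie in $[0,\alpha]$ and go to the diagonal at cost $\leqslant\alpha$. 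Hence the bottleneck distance, and therefore $\d_{\grVect}$, is at most $2\alpha$.

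For the lower bound $\d_{\As}\geqslant\frac{1-2\alpha}{2}$, suppose $A^\bullet$ and $B^\bullet$ are $\eps$-interleaved with $\eps<\frac{1-2\alpha}{2}$, that is $2\eps<1-2\alpha$. Then I can choose $r$ with $[r-2\eps,r]\subseteq[\alpha,1-\alpha)$, so that $r$, $r-\eps$ and $r-2\eps$ all lie in the stable range. The interleaving of \Cref{def:interleaving} supplies algebra maps $\nu^r\colon B^r\to A^{r-\eps}$ and $\mu^{r-\eps}\colon A^{r-\eps}\to B^{r-2\eps}$ whose composite $\mu^{r-\eps}\circ\nu^r$ is the structural map $B^r\to B^{r-2\eps}$, which by input (i) is an isomorphism; hence $\nu^r$ is injective. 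Transporting along the stable-range isomorphisms turns $\nu^r$ into an \emph{injective} graded-algebra morphism $\rmH^*(Y)\to\rmH^*(X)$. As already observed, no such morphism exists, since $\gamma=\alpha\cup\beta$ in $\rmH^*(Y)$ would map to a product of two degree-one classes of $\rmH^*(X)$, which vanishes, contradicting injectivity. This contradiction yields the bound.

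For the upper bound $\d_{\As}\leqslant\frac12$, I would construct, for every $\eps>\frac12$, an explicit $\eps$-interleaving by letting $\mu^r$ and $\nu^r$ be the unique unit-preserving algebra maps that vanish on $\rmH^{>0}$ (and the zero map whenever the target vanishes, i.e. for parameter $<0$). Such maps really are algebra morphisms: a linear map equal to an algebra map on $\rmH^0$ and to $0$ on the ideal $\rmH^{>0}$ is automatically multiplicative. Moreover, since $\eps>\frac12>\alpha$, whenever the target $B^{r-\eps}$ is non-zero the source $A^r$ is already connected, so $\rmH^0(A^r)=k\cdot 1$ and $\mu^r$ is completely pinned down by $1\mapsto 1$. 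The two composite conditions of \Cref{def:interleaving} then reduce, on $\rmH^0$, to $1\mapsto 1$ (agreeing with the unit-preserving structural map), and, on $\rmH^{>0}$, to the vanishing of the structural map $A^r\to A^{r-2\eps}$ in positive degrees; the latter holds because by input (ii) the source and target cannot both carry non-zero positive cohomology once $2\eps>1$. Letting $\eps\to\frac12$ gives $\d_{\As}\leqslant\frac12$.

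The main obstacle is the upper bound. The delicate point is not any individual map but checking that these \lq\lq trivial\rq\rq{} maps assemble into a natural transformation that is a genuine interleaving over \emph{all} parameters at once; this forces one to control the supports in input (ii) precisely — in particular that $\rmH^{>0}$ is concentrated in an interval of length at most $1$ — and to exploit that shifting by more than $\frac12$ pushes the low-scale, disconnected part of the filtration below parameter $0$, where the cohomology vanishes. Establishing input (ii) with these sharp endpoints through the nerve lemma is where the real work lies.
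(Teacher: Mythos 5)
Your proposal is correct and follows essentially the same route as the paper: a degreewise comparison of the persistent graded vector spaces on the stable and contractible ranges for the $2\alpha$ bound, the non-existence of an injective graded-algebra morphism $\rmH^*(Y)\to\rmH^*(X)$ (extracted from the interleaving triangle, whose composite is an isomorphism on the stable range) for the lower bound, and the trivial unit-preserving interleaving that kills $\rmH^{>0}$ for the upper bound. If anything, you are more explicit than the paper about the one genuinely delicate point, namely that the upper bound requires the structural maps over a gap exceeding $1$ to vanish in positive degrees at \emph{every} parameter, which the paper disposes of in a single sentence.
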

In particular, as soon as $\alpha< \frac{1}{4}$, we have
\[
	\d_{\grVect}\left(\Cech(D_X^\alpha),\Cech(D_Y^\alpha)\right)
	<\d_{\As}\left( \Cech(D_X^\alpha),\Cech(D_Y^\alpha)\right).
\]
and the smaller is $\alpha$, the bigger is the difference between the two distances.
Indeed one can make the usual bottleneck distance goes to $0$ while the associative distance goes to $\frac{1}{2}$. 

\begin{proof}[Proof of Proposition~\ref{Prop:torusvswedge}]
 For the first inequality $
	\d_{\grVect}\left(\Cech(D_X^\alpha),\Cech(D_Y^\alpha)\right) \leqslant 2\alpha 
$, note that for  all $\eps \in [\alpha, 1-\alpha[ \cup ]1+\alpha,+\infty [$,   $\rmH^*(\Cech(D_X^\alpha)_\eps) $ and $ \rmH^*(\Cech(D_Y^\alpha)_\eps)$ are isomorphic as graded vector spaces. Therefore, for all $\eta>0$, one can exhib a linear $(2\alpha+\eta)$-interleaving between $\rmH^*(\Cech(D_X^\alpha)) $ and $ \rmH^*(\Cech(D_Y^\alpha))$ and the result on the distance $\d_{\grVect}$ follows.

Now, suppose that there is $\eps < \frac{1-2\alpha}{2}$ such that there exists a $\eps$-interleaving in the category $\AsAlg$ between $\rmH^*(\Cech(D_\alpha^X))$ and $\rmH^*(\Cech(D_\alpha^X))$. Then, by definition, we will have the 
the following diagram
\[
	\begin{tikzcd}
		& \rmH^*(\Cech(D^\alpha_X))_{\alpha+\eps} \arrow[rd,"\nu"]& \\
		\rmH^*(\Cech(D^\alpha_Y))_\alpha  \arrow[rr,"\cong"'] \arrow[ru,"\mu"] && \rmH^*(\Cech(D^\alpha_Y))_{\alpha+2\eps}
	\end{tikzcd} \ ,
\]
that implies that $\mu$ is injective. But there is no injective graded algebra homomorphism $\rmH^*(\Cech(D^\alpha_Y))_\alpha \cong \rmH^*(Y) \to \rmH^*(X) \cong 
\rmH^*(\Cech(D^\alpha_X))_{\alpha+\eps}$ as previously mentioned (since $\alpha\cup \beta =\gamma$ in $\rmH^*(Y)$ while the cup product is zero in $\rmH^*(X)$). Therefore there is no such $\eps$-interlevaing and we have the inequality
\[
	\frac{1-2\alpha}{2} \leqslant \d_{\As}\left( \Cech(D_X^\alpha),\Cech(D_Y^\alpha)\right)
\] 
as a consequence. 
On the other hand, since the structure maps $\rmH^*(\Cech(D^\alpha_Y))_\eps \to (\Cech(D^\alpha_Y))_{1+\eps}$ and $\rmH^*(\Cech(D^\alpha_X))_\eps \to (\Cech(D^\alpha_X))_{1+\eps}$ are zero, it is obvious that there is an  $\frac{1}{2}$-multiplicative interleaving. We thus get  the missing inequality 
\[
	\d_{\As}\left( \Cech(D_X^\alpha),\Cech(D_Y^\alpha)\right)\leqslant \frac{1}{2} \ .
\]
\end{proof}
\end{example}

\begin{remark} 
	The previous example can be done for the Rips complex of the same data sets  $D_X^\alpha$, $D_Y^\alpha$ by considering similar balls but for the euclidean norm replaced by the $\|-\|_\infty$ norm.
\end{remark}

\begin{remark}
	It is easy to construct several examples along the line of~\Cref{ex:torus_sphere}. We follow this idea to make \Cref{ex:borromean}.
\end{remark}

\begin{example}[Concrete computations for \Cref{ex:torus_sphere}] \label{ex:torus_sphere_comput}

One can consider a concrete example of this situation,  considering a specific discretisation of these spaces. For the rest of this example, we fix an integer $n$, which suppose even for convenience of computation. We consider the following discrete space

\[
	\begin{aligned}
	\widetilde{X} \coloneqq &
	\left\{
		\left(
			\cos(-\frac{\pi}{2}+i\frac{\pi}{n})\cos(j\frac{\pi}{n}),
			\cos(-\frac{\pi}{2}+i\frac{\pi}{n})\sin(j\frac{\pi}{n}),
			\sin(-\frac{\pi}{2}+i\frac{\pi}{n}
		\right) ~|~
		i\in \llbracket0,n\rrbracket, \quad j\in \llbracket-n,n\rrbracket
	\right\} \\
	& \cup 
	\left\{
		(\cos(j\frac{\pi}{n}),
		\sin(j\frac{\pi}{n})-2,
		0) ~|~
		 j\in \llbracket-n,n\rrbracket
	\right\}
	\cup 
	\left\{
		(\cos(j\frac{\pi}{n}),
		\sin(j\frac{\pi}{n})+2,
		0) ~|~
		 j\in \llbracket-n,n\rrbracket
	\right\}
	\end{aligned}
\]
which is clearly a discretisation of the space $X$. One can describe the persistent homology of this discrete space. For $0\leqslant \eps < 2\sin(\frac{\pi}{n})$, we have 
\[
	\rmH^2(\Rips(\widetilde{X}))_\eps = 0 \ ,
\]
with several birth and death of classes in degree $1$ ; 
and, for all $\eps\in [2\sin(\frac{\pi}{n}), 1[$, 
\[
	\rmH^*(\Rips(\widetilde{X}))_\eps = \rmH^*(X) \ ,
\]
and, for $\eps \geqslant 1$, on a 
\[
	\rmH^*(\Rips(\widetilde{X}))_\eps = k
\]
concentrated in degree $0$. One also consider the discrete space 
\[
	\widetilde{Y} \coloneqq 
	\left\{
		\left(
		(2+\cos(i\frac{\pi}{n}))\cos(j\frac{\pi}{3n}),
		(2+\cos(i\frac{\pi}{n}))\sin(j\frac{\pi}{3n}),
		\sin(i\frac{\pi}{n})
		\right)
		~|~
		i\in \llbracket-n,n\rrbracket, \quad j\in \llbracket-3n,3n\rrbracket
	\right\}
\]
which is a discretisation of the space $Y$. For $0\leqslant \eps < B_n$, we have 
\[
	\rmH^2(\Rips(\widetilde{Y}))_\eps = 0 \ ,
\]
with several birth and death of classes in degree $1$, where 
\[
	B_n  \coloneqq \sqrt{(3\cos(\frac{\pi}{3n})-2 - \cos(\frac{\pi}{n}))^2  + 9\sin^2(\frac{\pi}{3n}) +\sin^2(\frac{\pi}{n})} \ ;	
\] 
and, for all $\eps\in [B_n, 1[$, 
\[
	\rmH^*(\Rips(\widetilde{Y}))_\eps = \rmH^*(Y) \ ,
\]
and, for $\eps \geqslant 1$, on a 
\[
	\rmH^*(\Rips(\widetilde{Y}))_\eps = k
\]
concentrated' in degree $0$. Remark that, for all $n\in \N^*$, $B_n < 2\sin(\frac{\pi}{n})$, then, for all $\eps > 2\sin(\frac{\pi}{n})$, we have 
\[
	\rmH^*(\Rips(\widetilde{X}))_\eps \underset{\grVect}{\cong} \rmH^*(\Rips(\widetilde{Y}))_\eps
\]
so 
\[
	\d_{\grVect}\left(\Rips(\widetilde{X})),\Rips(\widetilde{Y}))\right) \leqslant \sin(\frac{\pi}{n}) \ ,
\]
and
\[
	\frac{1-2\sin(\frac{\pi}{n})}{2} \leqslant \d_{\As}\left(\Rips(\widetilde{X})),\Rips(\widetilde{Y}))\right) \leqslant  \frac{1}{2} \ .
\]
One can use the python librairy \texttt{Gudhi} to make some computations of the bottleneck distance between $\widetilde{X}$ and $\widetilde{Y}$ using alpha complexes (see \Cref{not:alphacomplex}). For example, if $n=20$, then the persistent diagrams and the barcodes (where one just print the fifteenth bigger bars) are given in \Cref{fig:barcode_alpha_X}, \ref{fig:barcode_alpha_Y}, \ref{fig:pers_alpha_X} and \ref{fig:pers_alpha_Y}. One also compute (using again \texttt{Gudhi}) the bottleneck distance between persistent diagrams in each degree, which gives us :
\[
	\begin{aligned}
	\mathrm{d}_{\mathrm{bottle}}(\rmH^0(\Alpha(\widetilde{X})), \rmH^0(\Alpha(\widetilde{Y}))) & \approx 0.0030 \\
	\mathrm{d}_{\mathrm{bottle}}(\rmH^1(\Alpha(\widetilde{X})), \rmH^1(\Alpha(\widetilde{Y}))) & \approx 0.0055 \\
	\mathrm{d}_{\mathrm{bottle}}(\rmH^2(\Alpha(\widetilde{X})), \rmH^2(\Alpha(\widetilde{Y}))) & \approx 0.0027
	\end{aligned} 
\] 
Therefore the graded bottleneck distance is $\max_{j}(\mathrm{d}_{\mathrm{bottle}}(\rmH^j(\Alpha(\widetilde{X})), \rmH^j(\Alpha(\widetilde{Y})))\simeq 0,0055$.
\begin{figure}[h!]
	\begin{minipage}[c]{.46\linewidth}
	\includegraphics[scale=0.45]{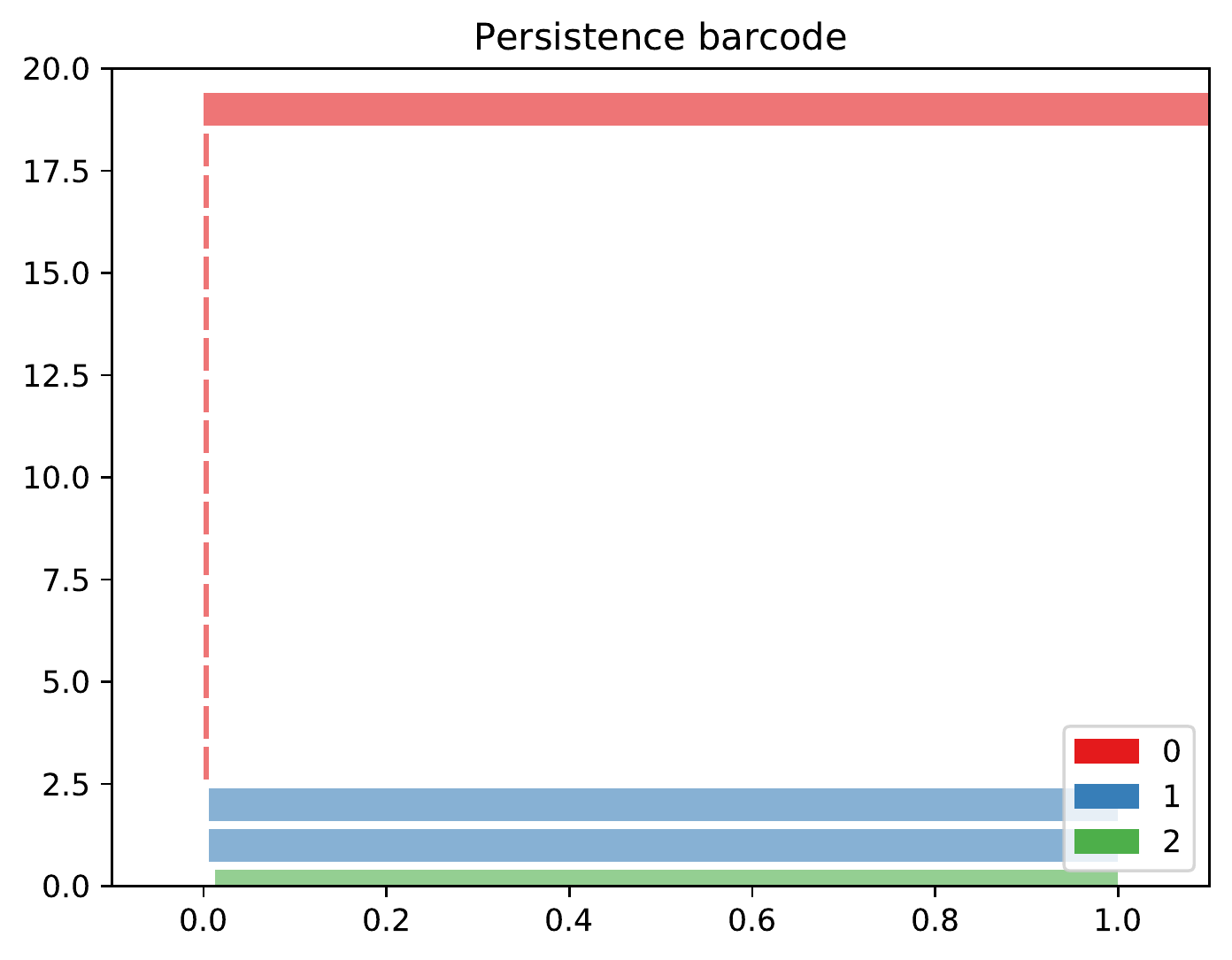}
	\caption{Persistent barcode of $\Alpha(\widetilde{X})$}
	\label{fig:barcode_alpha_X}
\end{minipage}
\begin{minipage}[c]{.46\linewidth}
	\includegraphics[scale=0.45]{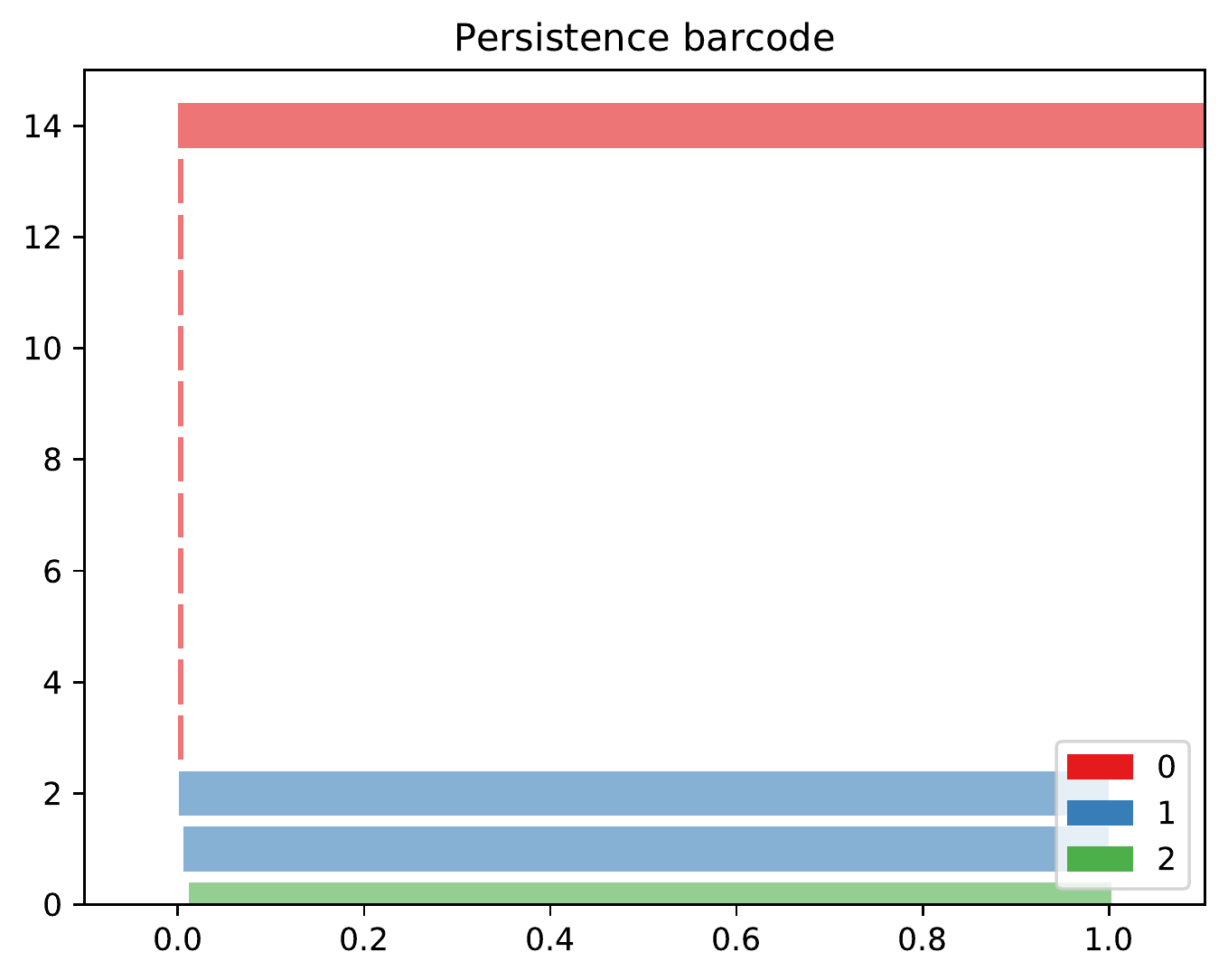}
	\caption{Persistent barcode of $\Alpha(\widetilde{Y})$}
	\label{fig:barcode_alpha_Y} 
\end{minipage}
\end{figure}

\begin{figure}[h!]
	\begin{minipage}[c]{.46\linewidth}
	\includegraphics[scale=0.45]{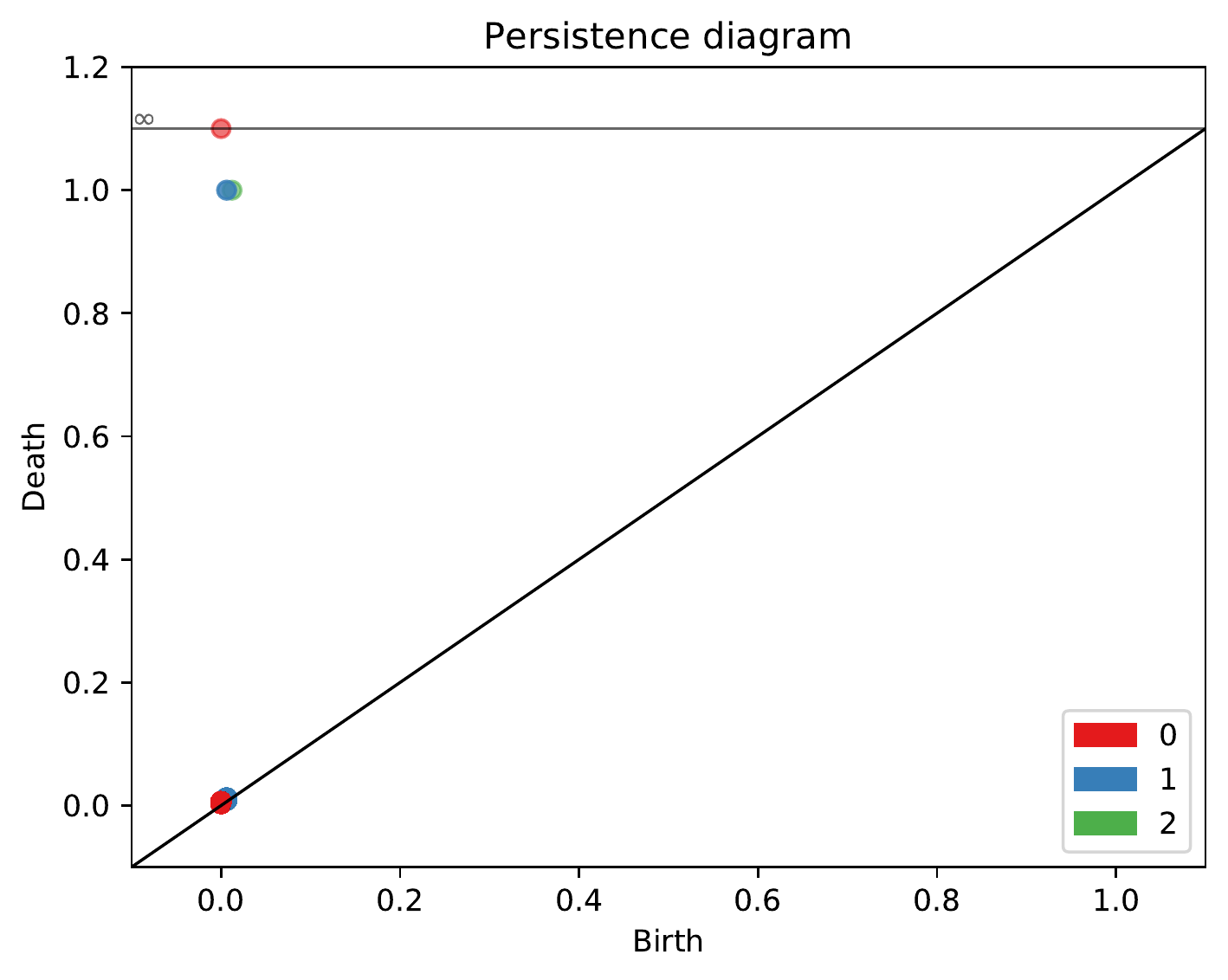}
	\caption{Persistent diagram of $\Alpha(\widetilde{X})$}
	\label{fig:pers_alpha_X}
\end{minipage}
\begin{minipage}[c]{.46\linewidth}
	\includegraphics[scale=0.45]{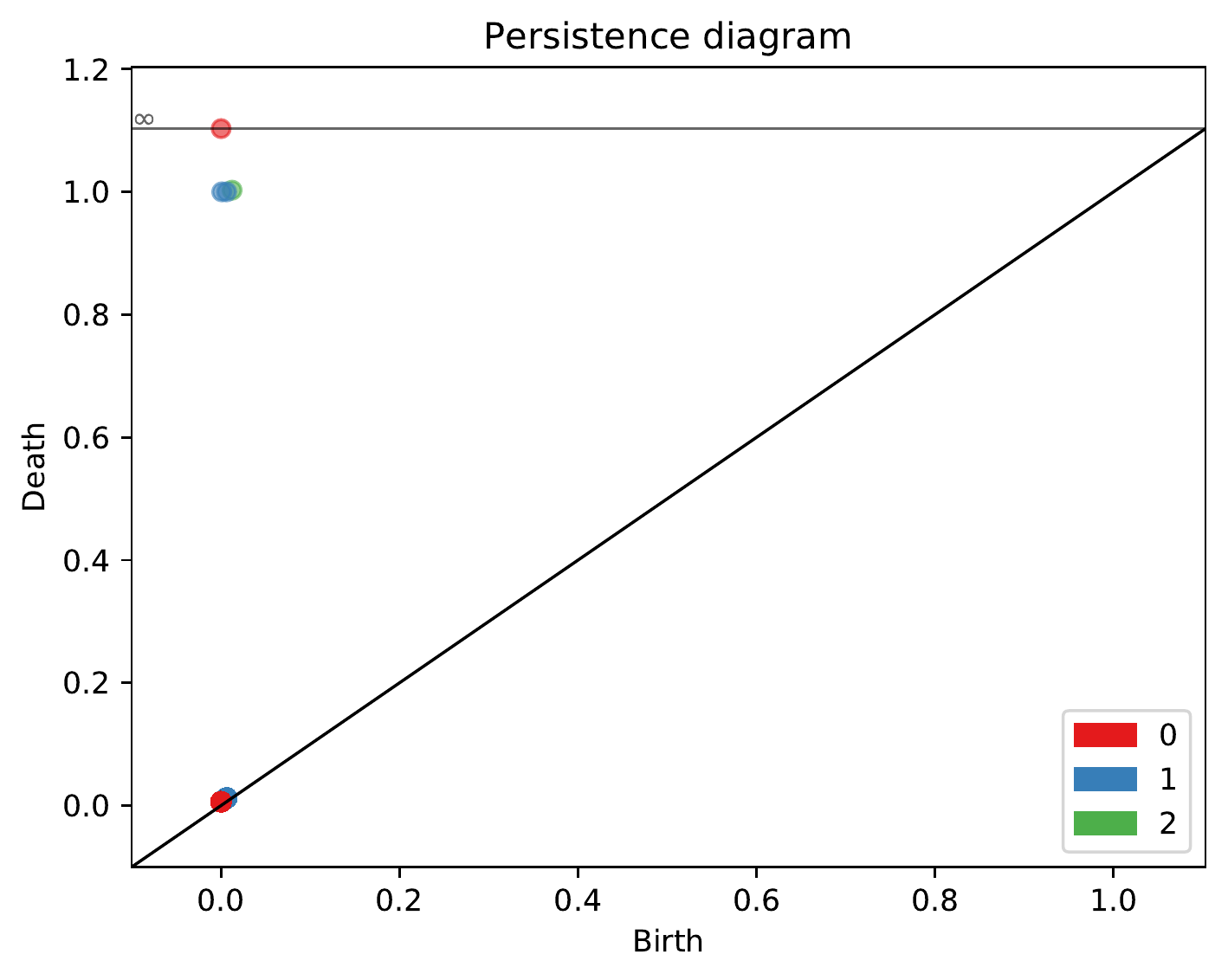}
	\caption{Persistent diagram of $\Alpha(\widetilde{Y})$}
	\label{fig:pers_alpha_Y} 
\end{minipage}
\end{figure}
We can estimate the associative distance as follows. We work over the field $\mathbb{F}_{11}$. 
The two long bars of the barcode of $\rmH^1(\Alpha(\tilde{Y}))$ corresponds to classes whose cup-product has a non-trivial component along the generator of the long bar in $\rmH^2(\Alpha(\tilde{Y}))$. On the other hand the two long bars of the barcode of 
of $\rmH^1(\Alpha(\tilde{X}))$ corresponds to classes whose cup-product is zero. Write $ (t_0^i, t_1^i]$ for those  respective 6 intervals ($i\in \{1\dots, 6\}$).  

\smallskip

Let $\delta =\min_{i}(t^i_1) -\max_{i}(t^i_0)$ be the difference  between the higher starting point of the dimension $1$ and $2$ long bars of $\rmH^*(\Alpha(\tilde{X}))$ and $\rmH^*(\Alpha(\tilde{Y}))$ and  the smaller endpoint corresponding to those long bars in dimension $1$ and $2$. Then there could be  no algebra $\frac{\delta}{2}$-interleaving in between $\rmH^*(\Alpha(\tilde{X}))$ and $\rmH^*(\Alpha(\tilde{Y}))$ because we would have a commutative diagram
\begin{equation}\label{eq:torus-spherecomp}
	\begin{tikzcd}
		& \rmH^1(\Alpha(\tilde{X}))_{\max_{i}(t_0^i)}^{\otimes 2} \arrow[r,"\cup"]& \rmH^2(\Alpha(\tilde{Y}))_{\min_{i}(t_1^i)} \arrow[rd, "\nu"]&\\
		\rmH^1(\Alpha(\tilde{Y}))_{\max_{i}(t_0^i)}^{\otimes 2}  \arrow[rr,"\cong"] \arrow[ru,"\mu"] && \rmH^1(\Alpha(\tilde{Y}))_{\min_{i}(t_1^i)}^{\otimes 2} \arrow[r, "\cup"] & \rmH^2(\Alpha(\tilde{Y}))_{\max_{i}(t_0^i)}
	\end{tikzcd} \ ,
\end{equation}
 for which the lower line will be non-zero while the top compositon is zero. The numerical computation gives us $\delta= 0.987702$ therefore we find that 
\[  0.49385\leqslant \d_{\As}\left(\rmH^*(\Alpha(\widetilde{X})), \rmH^*(\Alpha(\widetilde{Y}))\right)  \leqslant 0.5.\]
\end{example}

\begin{remark}[About computation of the graded algebra distance] \label{remark:computation_multiplicative_distance}
In the above~\Cref{ex:torus_sphere_comput}, it was fairly easy to give a good approximation of the graded associative algebra interleaving distance, since there was essentially very few long bars and one of the algebra structure was trivial, which was giving right away strong constraint on the possible graded algebras interleavings. Similar situations can be exploited for most other kind of distances we discuss in this paper, see~\Cref{ex:Borromeandiscret} for instance. 


We believe this kind of argument can be generalized in practice to give an algorithm to give a lower bound for computing the graded associative algebra distance, though we do not have such as the moment in general. The idea is the following. 

Let $A^*,\, B^*$ be pointwise finite dimensional copersistent graded algebras over the field $k=\mathbb{F}_2$, that is we assume that persistent vector space $\bigoplus_i A^i$ is pointwise finite dimensional. A graded algebra interleaving $A^* \to B^*$ gives an interleaving distance in the usual vector space sense in all degrees at once, but which satisfy the additional constraint of being compatible with the cup-product (such as in diagram~\eqref{eq:torus-spherecomp}). There exists an $\eps$-interleaving in between $A^i$ and $B^i$ if and only if there exists a $\eps$-matching in between their barcode~\cite{bauer2013induced}. Every bar in the barcode corresponds to (co)homology classes, and given two of them, say $a_1\in A^i$, $a_2\in A^j$, for which we can compute their product (in practice  our copersistent algebras arised as cohomology of finite simplicial complexes) which is a sum $\sum_{k\in I} c_k \in A^{i+j}$ where the $c_k$ are cohomology classes associated to bars of the graded barcode of $A^*$. Therefore, if one has an $\eps$-graded algeba interleaving such that  $a_1, a_2$ are $\eps$-matched to two bars $b_1\in B^i$, $b_2\in B^j$, then, denoting $b_1\cup b_2= \sum_{\ell\in J} d_j$,  from diagrams similar to~\eqref{eq:torus-spherecomp}, we get that   
the bars $(c_k)_{k\in I}$ are $\eps$-interleaved (with the cohomology classes representing the) bars $(d_\ell)_{\ell\in J}$ as well as $\eps$-matched with them.

Thus, in order to give an effective lower bound on the graded algebra distance, on can look for an $\eps$-matching between the degree $1$ bars of $A^1$ and $B^1$ such that for any matched pairs $(a_1, b_1)$, $(a_2, b_2)$, there exists an $\eps$-matching between the degree $2$ bars which restricts to $\eps$-matching in between  the corresponding non zero bars $(c_k)$ and $(d_k)$ appearing in  $a_1\cup a_2= \sum c_k$, and $b_1\cup b_2= \sum d_k$. And then further restricts to those matching such that there are similar conditions for cup product between higher degrees bars. Then the lower such $\eps$ is a lower bound for the associative distance. By finiteness, this is only a finite number of steps. Also it is not needed to consider degree 0 bars since the one appearing in the cohomology of a space are units of the corresponding cohomology of their connected component.

Note that this is precisely a generalization of what we computed  in~\Cref{ex:torus_sphere_comput} above.

\end{remark}

\begin{example}\label{ex:twoloops}There are several other types of examples of spaces with similar invariants that will be easily distinguished by their cup-product algebras like $\mathbb{P}^3(\mathbb{C})$ and $S^2\times S^4$.  

Another interesting example is to consider the complement $Y=S^3 \setminus E$ and $X=S^3\setminus T$ of two entangled circles $E$ and a trivial link $T$ of two circles. Both spaces have isomorphic cohomology vector spaces $\rmH^1(X)=k^2=\rmH^1(Y)$, $\rmH^2(X)=k=\rmH^*(Y)$ but they have different algebras. Indeed the cup-product of the generators corresponding to loops around each string of $E$ have a non-trivial cup-product while this is not the case in the trivial case. In that case the cup-product is the Poincar\'e dual of intersection pairing and describe whether or not two loops can be unknotted. 
 
In order to generalize this example to case where there are $n\geq 3$ loops involved, one needs to use higher homotopy structures as such coming from $\calA_\infty$-algebras. See~\Cref{ex:borromean}. 

On can extend this example into a data sets example in the same way as in~\Cref{ex:torus_sphere_comput} and \Cref{ex:Borromeandiscret}.
\end{example}

\section{The \texorpdfstring{$\calA_{\infty}$}-interleaving distance} \label{sect:Ainfty}
One big drawback of working with the homotopy category $\ho(\AsAlg)$ of dg-associative algebras is that it is hard to study, namely because we cannot, in general, invert a quasi-isomorphisms of dg-algebras (that is find a morphism in the opposite direction inducing the inverse in cohomology). One is forced to work with zigzags of morphisms (in other words to consider maps from $X$ to $Y$, one has to consider to pass through any other object $Z$) and to put a complicated equivalence relation on zigzags. One classical way to avoid that in algebraic topology, which will also  relate the structure to the barcode of a persistence space, is to replace dg-algebras by $\calA_\infty$-algebras. We  investigate the associated distance in this section. 

\subsection{Review on associative algebras up to homotopy}
This part is a rapid overview of the theory of $A_\infty$-algebras: the interested reader can consult \cite[Chapter 9]{LV2012} or \cite[Chapter 1]{lefevre2002categories}. 

\begin{definition}[$\calA_\infty$-algebra]\label{def:Ainfty}
	An $\calA_\infty$-algebra is a graded vector space $A=\{A_k\}_{k\in \Z}$,  equipped with an $n$-ary operation
	\[
		m_n:A^{\otimes n} \longrightarrow A \quad \mbox{ of degree $n-2$ for all $n \geqslant  1$,}
	\]
	which satisfy, for all $n\geqslant 1$, the following relation
	\begin{equation}\tag{$\mathrm{rel}_n$}
		\sum_{p+q+r=n} (-1)^{p+qr} m_{p+r+1} \circ ( \id_A ^{\otimes p}\otimes  m_q \otimes \id_A^{\otimes r} )=0 \ .
	\end{equation}
\end{definition}

\begin{remark}
	\begin{enumerate}
		\item By the relation $(\mathrm{rel}_1)$, the map $m_1$ is a differential, and the relation $(\mathrm{rel}_2)$ implies that the binary product $m_2$ is a chain complex map.
		
		\item A differential graded associative algebra $(A,\mu,d_A)$ is an $\calA_\infty$-algebra where $m_1=d_A$, $m_2=\mu$ and, for $n\geqslant 3$, $m_n=0$.
	\end{enumerate}
\end{remark}

The notion of morphisms of $\calA_\infty$-algebras is too rigid  to encode the homotopy theory of $\calA_\infty$-algebras in a practical way, so we use a more flexible notion of morphisms, called $\infty$-morphisms. This  notion of $\infty$-morphism allows to define the category $\infty$-$\AinfAlg$ (see \Cref{def:AinftyAlg_category}): this category has a notion of $\infty$-quasi-isomorphism which we use to define its homotopy category. The point is that each $\infty$-quasi-isomorphism has an inverse in homology in the strict category $\infty$-$\AinfAlg$. So, we do \emph{not} need to consider zigzags of morphisms to understand what is a morphism in the homotopy category of $\calA_\infty$-algebras. This makes the notion of interleaving completely straightforward.

\begin{definition}[$\infty$-morphisms between $\calA_\infty$-algebras]\label{def:Ainftymaps}
	Let $(A,m^A_*)$ and $(B,m^B_*)$ be two $\calA_\infty$-algebras. An $\infty$-morphism $f: A \rightsquigarrow B$ of $\calA_\infty$-algebras is a family of maps $\{f_n :A^{\otimes n} \to B \}_{n\geqslant 1}$ of degree $n-1$ such that $f_1$ is a morphism of chain complexes and, for $n\geqslant 2$, $f_n$ satisfies the relation
	\begin{align*}
		\partial(f_n) & = \
		\sum_{{\substack{p+q+r=n}}} (-1)^{p+qr}f_{p+r+1} \circ (\id_A,\ldots,\id_A,m_q^A,\id_A,\ldots,\id_A) 
		\\
		& \qquad - \sum_{{\substack{ k \geqslant 2 \\ i_1+\ldots+i_k=n}}} 
		(-1)^{\sum_{j=i}^{k-1}(k-j)(i_j-1)} m^B_k \circ (f_{i_1},\ldots , f_{i_k}) \ .
	\end{align*}
	The set of $\infty$-morphism from $A$ to $B$ is denoted by $\Hom_{\infty-\calA_\infty}(A,B)$.
\end{definition}

\begin{remark}
	A (strict) morphism $f:A\to B$ of $\calA_\infty$-algebras, i.e. which satisfies
	\[
		f\circ m^A_n = m_n^B\circ f^{\otimes n}
	\]
	for all $n\geqslant 1$, is canonically a $\infty$-morphism given by the family of maps $(f,0,0\ldots )$.
\end{remark}

\begin{defiprop}[The category $\infty\textsc{-}\Alg_{\calA_\infty}$]
	\label{def:AinftyAlg_category}
	There exists an associative composition of $\infty$-morphisms:
	\[
		\Hom_{\infty-\calA_\infty}(B,C) \times \Hom_{\infty-\calA_\infty}(A,B) \longrightarrow \Hom_{\infty-\calA_\infty}(A,C)
	\] 
	such that, for all $\calA_\infty$-algebra $A$, the identity $A \rightsquigarrow A$ is given by the strict classical one. The $\calA_\infty$-algebras and the $\infty$-morphisms form a category denoted by $\infty\textsc{-}\Alg_{\calA_\infty}$.
\end{defiprop}

An important tool we will use is the fact that a chain complex which is quasi-isomorphic to an $\calA_\infty$-algebra automatically inherits a transferred quasi-isomorphic $\calA_\infty$-structure. In particular, we have the following theorem, which gives us a canonical $\calA_\infty$-structure on the homology of an $\calA_\infty$-algebra.

\begin{theorem}[Homotopy Transfer Theorem  {\cite[Theorem 10.3.7 and 10.3.10]{LV2012}}]
	\label{thm::HTT}
	Let $A$ be an $\calA_\infty$-algebra, $i,p$ two morphisms of chain complexes such that $i$ is a quasi-isomorphism and $h$ a map of degree $1$:
	\[
		\begin{tikzcd}
			(A,d_A)\arrow[r, shift left, "p"] \arrow[loop left, distance=1.5em,, "h"] & (\rmH(A),0) \arrow[l, shift left, "i"]
		\end{tikzcd} \ ,
	\]
	such that $ip - \id_A =d_Ah+hd_A$.
	\begin{enumerate}
		\item There is a $\calA_\infty$-structure on the homology $\rmH_*(A)$ of the underlying chain complex of $A$, which extends its associative algebra structure.
		
		\item The embedding $i$ and the projection $p$, associated to the choice of sections for the homology, extend to $\infty$-quasi-isomorphisms of $\calA_\infty$-algebras.
		
		\item The $\calA_\infty$-structure on the homology $\rmH_*(A)$ is independent of the choice of sections of $\rmH_*(A)$ into $A$ in the following sense: any two such transferred structures are related by an $\infty$-isomorphism, whose first map is the identity on $\rmH_*(A)$.
	\end{enumerate}
\end{theorem}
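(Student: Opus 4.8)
The plan is to realise the transfer through the bar construction and the Homological Perturbation Lemma (HPL), which produces all three items at once. First I would encode the $\calA_\infty$-structure on $A$ as a square-zero coderivation $D_A$ on the reduced tensor coalgebra $T^c(sA)=\bigoplus_{n\geqslant 1}(sA)^{\otimes n}$, the relations $(\mathrm{rel}_n)$ amounting exactly to $D_A^2=0$; dually, $\infty$-morphisms in the sense of \Cref{def:Ainftymaps} correspond to morphisms of dg-coalgebras between such bar constructions. I would then split $D_A=\mathcal{D}+\delta$, where $\mathcal{D}$ is the coderivation induced by $m_1=d_A$ alone and $\delta$ is the one induced by the higher products $m_{\geqslant 2}$. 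The structural point is that $\mathcal{D}$ preserves the tensor-length filtration of $T^c(sA)$ whereas $\delta$ strictly lowers it.

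Next I would lift the contraction $(i,p,h)$ to a contraction between $(T^c(sA),\mathcal{D})$ and $(T^c(s\rmH(A)),0)$ by the standard tensor trick: the cofree extensions of $i$ and $p$ (which I still denote $i$ and $p$) together with an explicit homotopy $H$ assembled from $h$, $i$ and $p$ on each $(sA)^{\otimes n}$. After arranging the usual side conditions $pi=\id$, $h^2=0$, $ph=0$ and $hi=0$, which can always be met, I would apply the HPL with the perturbation $\delta$. Because $H$ preserves tensor length while $\delta$ strictly lowers it, the operator $H\delta$ strictly lowers tensor length, so the series $(1-H\delta)^{-1}=\sum_{k\geqslant 0}(H\delta)^{k}$ terminates on each element of the conilpotent coalgebra; the HPL therefore converges with no finiteness assumption and outputs a differential $\delta_\infty=p\,\delta\,(1-H\delta)^{-1}i$ on $T^c(s\rmH(A))$ together with perturbed maps $I=(1-H\delta)^{-1}i$ and $P=p\,(1-H\delta)^{-1}$. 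Extracting the corestriction of $\delta_\infty$ yields the transferred operations $m_n^{\rmH}$ as a sum over planar rooted trees with $n$ leaves (leaves decorated by $i$, internal vertices by the $m_k$, internal edges by $h$, and the root by $p$); the arity-$2$ summand $p\circ m_2\circ(i\otimes i)$ is precisely the product induced on homology, yielding item (1), while $I$ and $P$ provide $i_\infty$ and $p_\infty$, whose linear parts $i$ and $p$ are quasi-isomorphisms, yielding item (2).

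The main obstacle is to upgrade the raw HPL output from chain complexes to coalgebras: the lemma only guarantees a contraction of the underlying complexes, so I must check that $\delta_\infty$ is again a coderivation squaring to zero, hence genuinely defines an $\calA_\infty$-structure on $\rmH(A)$, and that $I$ and $P$ are morphisms of dg-coalgebras, hence define honest $\infty$-morphisms. This compatibility is not formal: it rests on the precise shape of the tensor-trick homotopy $H$ together with the side conditions, and is the technical core of the argument, verified by an induction on tensor length showing that the perturbed maps respect the deconcatenation coproduct.

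Finally, for item (3) I would exploit the invertibility of $\infty$-quasi-isomorphisms emphasised just before \Cref{def:Ainftymaps}. Given two choices of contraction data, with outputs $(m^{\rmH},i_\infty,p_\infty)$ and $(\overline{m}^{\rmH},\overline{i}_\infty,\overline{p}_\infty)$, the composite $\phi=\overline{p}_\infty\circ i_\infty$ is an $\infty$-quasi-isomorphism $(\rmH(A),m^{\rmH})\rightsquigarrow(\rmH(A),\overline{m}^{\rmH})$. Since both transferred structures have vanishing differential $m_1=0$, and $\overline{p}$ and $i$ both induce the identity in homology, the linear part $\phi_1=\overline{p}\,i$ is a chain endomorphism of $(\rmH(A),0)$ inducing the identity, hence equals $\id_{\rmH(A)}$. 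As an $\infty$-morphism with identity linear part is automatically an $\infty$-isomorphism, this is exactly the assertion of item (3).
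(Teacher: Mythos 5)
Your argument is correct, but be aware that the paper offers no proof of \Cref{thm::HTT} at all: it imports the statement from \cite{LV2012}, so the only meaningful comparison is with the proof given there. Loday and Vallette proceed in the opposite order from you: they first write down the transferred operations $m_n^{\rmH}$ explicitly as a sum over planar rooted trees decorated by $i$, $h$, $p$ and the $m_k$, and then verify the $\calA_\infty$-relations and the extension of $i$ to an $\infty$-quasi-isomorphism by operadic means, interpreting the tree sum as a twisting morphism from the Koszul dual cooperad of $\As$ to the endomorphism operad of $\rmH(A)$. You instead obtain the formula and the relations simultaneously from the homological perturbation lemma applied to the bar construction, with the tensor-length filtration guaranteeing convergence of $(1-H\delta)^{-1}$. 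Both routes are standard and both concentrate the real work in the same place under different labels: in your version it is the verification that the perturbed differential is again a coderivation and that the perturbed maps are coalgebra morphisms, which you correctly identify as the technical core but only sketch as an induction on tensor length --- this genuinely depends on the specific tensor-trick homotopy $H$ and on the side conditions, and it is the one step you would need to write out in full; in the operadic version the analogous work is the direct verification that the tree sum satisfies the $\calA_\infty$-relations. Your treatment of item (3), composing $\overline{p}_\infty$ with $i_\infty$ and observing that the linear part is an endomorphism of $(\rmH(A),0)$ inducing the identity on homology, hence equal to the identity, so that the composite is an $\infty$-isomorphism, matches the standard argument and is complete. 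One detail worth making explicit: the side condition $pi=\id$ is automatic in the setting of the theorem (since $ip$ is homotopic to $\id_A$, $i$ is a quasi-isomorphism and the target carries the zero differential), whereas $hi=0$, $ph=0$ and $h^2=0$ require the usual modification of $h$, which is harmless but should be recorded.
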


\begin{remark}[Higher multiplications and Massey products]
	In following examples, we will exhibit how we can use the $\calA_\infty$-structure (and later on $\calA_\infty$-interleaving distance) to differentiate spaces that are not distinguished by the bottleneck distance nor the graded algebra interleaving distance from \Cref{sect:distance_As}. 

	Let $X$ be a topological space, $p,i,h$ be a choice of contraction between $\rmC^*(X)$ and $ \rmH^*(X)$ as in \Cref{thm::HTT} above:
	\[
		\begin{tikzcd}
			(\rmC^*(X),\cup,\partial)\arrow[r, shift left, "p"] \arrow[loop left, distance=2em, "h"] & (\rmH^*(X),0) \arrow[l, shift left, "i"]
		\end{tikzcd} \ ,
	\]
	where $\cup$ is the associative cup product on $\rmC^*(X)$. Then, the $n$-ary operations of  the transferred $\calA_\infty$-structure on $\rmH^*(X)$ gives us some generalized higher Massey products which  we just call \emph{higher Massey products} for short.
	This denomination is not exaclty the classical one, but the two notions are closely related. For more details on classical Massey product and the higher operations $m_n$, the reader can refer to \cite{BMM20}. 
	
	This transferred structure depends of the choice of contraction, but two different choices of contractions gives us two $\mathrm{A}_\infty$-structures on $\rmH^*(X)$ which are $\infty$-isomorphic. 
	Particularly, if for a specific choice of contraction, the $\mathrm{A}_\infty$-structure is trivial (meaning the higher operations $m_n$ are null for $n\geq 3$), then it is true for every choice of contraction. See also \cite[Theorem C]{BMM20}. 
	These higher Massey products  allow to make the difference between some spaces which have the same cohomology as graded algebra but whose  $m_i$'s for $i\geq 3$ defines non-isomorphic $\calA_\infty$-structure. 
\end{remark}

\begin{example}[Borromean rings and trivial entanglement]\label{ex:borromean}



Consider the complement in the $3$-sphere $S^3\subset \R^4$ (of radius $1$) of the Borromean rings $B_3$ (see \Cref{fig:borromean}) and the trivial entanglement of three rings $U_3$ (see \Cref{fig:trivial_entanglement}). These two spaces have the same cohomology as graded algebra  : one has, for any field $k$, 
\[
	(\rmH^*(S^3 \setminus U_3),\cup) \cong (\rmH^*(S^3 \setminus B_3),\cup)
\]
where 
\[
	\rmH^0(S^3 \setminus U_3) = k\ , \quad \rmH^1(S^3 \setminus U_3) =k^3 \quad \mbox{ and } \rmH^2(S^3 \setminus U_3) = k^2 \ .
\] The cup-product are trivial since any two circles one can be entangled  if one forget the third in both examples (see~\Cref{ex:twoloops}).
However, these two spaces are not isomorphic as $\calA_\infty$-algebras, because $\rmH^*(S^3 \setminus U_3)$ has a trivial $m_3$ Massey product while $\rmH^*(S^3 \setminus B_3)$ has a non trivial one (see \cite[Section 9.4.5]{LV2012} or \cite[Proposition 3.5]{belchi2017} for a nice study of this example) for any field $k$.
\begin{figure}[h!]
	\begin{minipage}[c]{.46\linewidth}
		\centering
	\begin{tikzpicture}[scale=0.7]
	\begin{knot}[
		clip width=7,
		]
		\strand [thick] (0,0) circle (1.0cm);
		\strand [thick] (1,0) circle (1.0cm);
		\strand [thick] (0.5,1) circle (1.0cm);
		\flipcrossings{1, 2, 5, 6}
	\end{knot}
	\end{tikzpicture}
	\caption{Borromean rings} 
	\label{fig:borromean}
	\end{minipage}
	\begin{minipage}[c]{.46\linewidth}
		\centering
	\begin{tikzpicture}[scale=0.7]
		\begin{knot}[clip width=7]
		\strand [thick] (0,0) circle (1.0cm);
		\strand [thick] (1,0) circle (1.0cm);
		\strand [thick] (0.5,1) circle (1.0cm);
		\end{knot}
	\end{tikzpicture} 
	\caption{Trivial entanglement}
	\label{fig:trivial_entanglement}
	\end{minipage}
\end{figure} 
We are going below to use this example and a pattern similar to~\Cref{ex:torus_sphere} to give a concrete example of  discretisation which are distinguished by the $\calA_\infty$-analogue of interleaving distance but not by the graded algebra (therefore nor bottleneck either)-ones. See~\Cref{ex:Borromeandiscret}. 

Note that there are similar examples of link that can be proved to be non homotopic to trivial ones using similar Massey products arguments. 
\end{example}

\begin{example}
 The Borromean ring example~\ref{ex:borromean} above has obvious generalisation in higher dimension, allowing to prove that embeddings of higher spheres can not be untangled using higher Massey products. 
\end{example}

\begin{example}[Heisenberg manifold]
Other standard examples where non-trivial higher  Massey products arised and can be studied similarly.  
For instance, one can consider the  Heisenberg group, i.e. the group $\mathrm{Heis}$ of $3\times 3$ matrices of the form 
\[
	\begin{pmatrix}
		1 & a & b \\ 0 & 1 & c \\ 0 & 0 & 1
	\end{pmatrix}
\]
with $a,b,c \in \R$, and let $\Gamma$ be the subgroup of $\mathrm{Heis}$ with integer entries. The Heisenberg manifold is the quotient space $H=(\mathrm{Heis} / \Gamma)$;  it is a 3 dimensional compact manifold. Its cohomology algebra is $\rmH^1(H)= k \alpha \oplus k\beta$,  $\rmH^2(H)= k\gamma  \oplus k\tau$ and $\rmH^3(H)=k \omega$. The only non-trivial (that is the ones not imposed by degrees and the fact that odd degrees elements squared to zero) algebra relations are given by \[ \alpha\cup \beta =0=\alpha\cup \gamma = \beta\cup \tau,\quad \alpha \tau= \omega =-\beta \cup \gamma.\]
This is the same associative graded algebra structure as the connected sum of $S^1\times S^2$ with itself.
However, they are distinguised by the fact that the fact that the Heisenberg manifold has non-trivial Massey products; for instance $m_3(\alpha, \alpha, \beta)= \pm \gamma$. This can be seen since the cochain algebra $\rmC^*(H)$ of the Heisenberg manifold is quasi-isomorphic as a differential graded associative algebra with the graded symmetric algebra
$S(x,y, z)$, where $x,y, z$ are in degrees $1$, equipped with the diffential given by $d(x)=d(y)=0$, $dz= xy$. Note that the classes $\alpha=[x]$, $\beta= [y]$, $\gamma = [xz]$, $\tau=[yz]$ and $\omega =[xyz]$. See \cite[Chapter~2]{tralle2006symplectic} for more details.
One can build numerical examples similar to~\Cref{ex:torus_sphere_comput} and~\Cref{ex:Borromeandiscret} for which the $\calA_\infty$-distance allows to differentiate discretisation of the Heisenberg manifold from discretisation of the connected sum of two copies of $S^1\times S^2$ while the bottleneck or the graded algebra distance can not.  

\medskip  

The Heisenberg manifold example has many higher dimensional analogues with interesting non-trivial and useful higher Massey-products. For instance the famous Kodaira-Thurston manifold which is 
$
	K \coloneqq H  \times S^1 $ whose cohomology of $K$ with rationnal coefficients is  
\[
	\rmH^1(K;\Q) = \Q^3 \ , \ \rmH^2(K;\Q) = \Q^4 \ , \mbox{ and } \ \rmH^3(K;\Q) = \Q^3 \ .
\]
There are many symplectic non-K\"ahler manifolds that can be detected using non-triviality of Massey products and constructed along the same lines; see \cite{rudyak2000thom}.
\end{example}

The notion of $\calA_\infty$-algebra and $\calA_\infty$-morphisms gives a nice and practical model for the homotopy category of dg-algebras. 

\begin{theorem}[Equivalence of homotopy category (see {\cite{lefevre2002categories}} or  {\cite[Theorem 11.4.8]{LV2012}})]\label{thm:equivalence_homotopy_category}
	The homotopy category of differential graded associative algebras and the homotopy category of $\calA_\infty$-algebras with the $\infty$-morphisms are equivalent:
	\[
	\ho\left(\AsAlg\right) \cong \ho\left(\infty\textsf{-}\AsAlg\right) \cong
	\ho\left(\infty\textsf{-}\Alg_{\calA_\infty}\right).
	\]
\end{theorem}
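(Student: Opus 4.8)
The plan is to compare the three categories through two separate equivalences, treating the passage from strict to $\infty$-morphisms and the passage from dg-algebras to general $\calA_\infty$-algebras one at a time. Throughout I regard $\infty\textsf{-}\AsAlg$ as the full subcategory of $\infty\textsf{-}\Alg_{\calA_\infty}$ whose objects are dg-algebras (i.e. $\calA_\infty$-algebras with $m_n=0$ for $n\geq 3$), and I use the two canonical functors
\[
\AsAlg \longrightarrow \infty\textsf{-}\AsAlg \longrightarrow \infty\textsf{-}\Alg_{\calA_\infty},
\]
where the first keeps the objects but enlarges the morphisms from strict maps to $\infty$-morphisms, and the second enlarges the objects. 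Both send (strict) quasi-isomorphisms and $\infty$-quasi-isomorphisms to $\infty$-quasi-isomorphisms, so by the universal property of localization each descends to a functor between homotopy categories; it remains to see these are equivalences.

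For the right-hand equivalence $\ho(\infty\textsf{-}\AsAlg) \cong \ho(\infty\textsf{-}\Alg_{\calA_\infty})$, the decisive structural fact --- advertised before \Cref{def:Ainftymaps} and underlying \Cref{thm::HTT} --- is that every $\infty$-quasi-isomorphism admits an $\infty$-quasi-inverse. Consequently the localizations require no zigzags: the hom-sets in both homotopy categories are simply $\infty$-morphisms modulo the natural homotopy relation, with $\infty$-quasi-isomorphisms inverted. Since $\infty\textsf{-}\AsAlg$ is a full subcategory, the induced functor is automatically fully faithful. Essential surjectivity is the only content: I must show every $\calA_\infty$-algebra is $\infty$-quasi-isomorphic to an honest dg-algebra. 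I would obtain this from the bar--cobar construction, taking $\Omega B A$, which is a genuine dg-algebra, together with the counit $\Omega B A \to A$, which is an $\infty$-quasi-isomorphism. The minimal model supplied by \Cref{thm::HTT} is in general only $\calA_\infty$, so it does not by itself give essential surjectivity onto dg-algebras; the cobar construction is what guarantees a strictly associative representative.

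For the left-hand equivalence $\ho(\AsAlg) \cong \ho(\infty\textsf{-}\AsAlg)$, essential surjectivity is trivial since the objects coincide, and the whole point is full faithfulness: inverting strict quasi-isomorphisms in $\AsAlg$ and inverting $\infty$-quasi-isomorphisms in $\infty\textsf{-}\AsAlg$ must produce the same morphism sets. I would prove this by identifying both with morphisms out of a cofibrant (quasi-free) resolution. Concretely, for dg-algebras $A,B$ the bar construction identifies $\infty$-morphisms $A \rightsquigarrow B$ with strict dg-algebra maps $\Omega B A \to B$, while $\Omega B A \to A$ is a functorial cofibrant resolution; hence in both homotopy categories the hom-set from $A$ to $B$ is computed as homotopy classes $[\Omega B A, B]$, and the comparison functor realizes this identification. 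Equivalently, the cleanest unified formulation is that the bar--cobar adjunction is a Quillen equivalence between augmented dg-algebras and conilpotent dg-coalgebras, so that all three categories share the common homotopy category $\ho$ of conilpotent dg-coalgebras, $\infty$-morphisms being exactly the dg-coalgebra maps of bar constructions.

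The main obstacle is precisely the bookkeeping that makes these identifications legitimate, and it concentrates in two delicate points. First, one must genuinely prove that $\infty$-quasi-isomorphisms are invertible up to homotopy; this is an obstruction-theoretic argument on the bar construction, building the inverse and the homotopies degree by degree out of the contraction data of \Cref{thm::HTT}, and it is exactly what collapses the localization to a mere quotient by homotopy. Second, across the bar--cobar adjunction one must match weak equivalences correctly: the relevant weak equivalences of conilpotent dg-coalgebras are \emph{not} the underlying quasi-isomorphisms but exactly the maps sent to quasi-isomorphisms by $\Omega$, and verifying that an $\infty$-morphism is an $\infty$-quasi-isomorphism if and only if the associated coalgebra map is such a weak equivalence --- together with controlling the conilpotency and filtration hypotheses needed for the Quillen equivalence --- is where the real work lies.
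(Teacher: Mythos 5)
The paper does not prove this theorem at all — it imports it from the cited references (Lef\`evre-Hasegawa's thesis and Loday--Vallette, Theorem 11.4.8), and those sources establish it exactly along the lines you describe: rectification of an arbitrary $\calA_\infty$-algebra by the genuine dg-algebra $\Omega B A$, homotopy-invertibility of $\infty$-quasi-isomorphisms collapsing the localization to the quotient by the homotopy relation, and the bar--cobar Quillen equivalence with conilpotent dg-coalgebras identifying the strict and $\infty$-morphism localizations. Your sketch, including your identification of the two genuinely delicate points, is faithful to that argument; the only step stated too quickly is that fullness of $\infty\textsf{-}\AsAlg$ inside $\infty\textsf{-}\Alg_{\calA_\infty}$ does not ``automatically'' persist after localization --- one must still check that the homotopy relation between $\infty$-morphisms of two dg-algebras is the same whether computed in the smaller or the larger category, which holds because in both cases it is defined through the bar construction.
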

In particular, one can use $\calA_\infty$-algebra to study the homotopy category of algebras. We will in the next section explain under which condition we can use this structure in the persistance context.

\subsection{The problem of persistent transfer datum}\label{sec::filtered_data}
For a persistent topological space $X:\sfR\rightarrow \Top$ and $a>b$, we want to associate canonically contractions datum:
\[
	\begin{tikzcd}
	\rmC^*(X_a) \arrow[r, shift left, "p_a"] \arrow[loop left, distance=1.5em, "h_a"] \arrow[d,"\rmC^*(\phi_{ab})"'] 
	& \rmH^*(X_a)  \arrow[l, shift left, "i_a "] \\
	\rmC^*(X_b) \arrow[r, shift left, "p_b"] \arrow[loop left, distance=1.5em, "h_b"] 
	& \rmH^*(X_b)  \arrow[l, shift left, "i_b "] 
	\end{tikzcd} \ ;
\]
This is not possible in general, because we have to make choices, which have no reason to be compatible with the persistence structure maps (see~\Cref{R:notransfer}). Therefore we restrict our category of persistent topological spaces to a subcategory of objects satisfying some  finiteness conditions. These conditions are automatically satisfied for a set of data such as those arising in applications. 

Let us denote by $\DCpx$, the category of delta complexes as in~\cite{Hatcher}. 
Note that for \v{C}ech and Rips complexes arising from a data set, it is sufficient to consider simplicial complexes and not general delta complexes.

\begin{definition}[Finite filtered data]\label{def:ffdata}
We consider the full subcategory of the category of persistent delta-complex  $\Func(\sfR,\DCpx)$ with objects $X :\sfR \to \DCpx$ satisfying:
\begin{enumerate}
	\item for all $r$ in $\sfR$, the delta complex $X_r$ is finite;
	\item for all $a<b$ in $\sfR$, the morphism $X_a \hookrightarrow X_b$ is an injection;
	\item the set $\left\{X_r ~|~ r\in\sfR \right\}/_\sim$, where $X_a \sim X_b$ if the structural morphism $X_{a<b}$ is an isomorphism, is finite ;  
	\item for each $a$ in $\sfR$, we have a total order $\leqslant_a$ on $X_a$ such that
	\begin{itemize}
		\item for $\alpha$ and $\beta$ in $X_a$ such that $\alpha \subset \beta$ then $\alpha \leqslant_a \beta$ ;
		\item for all $a<b$ in $\sfR$, for all $\beta$ in $X_b \backslash X_a$, and all $\alpha$ in $X_a$, then $\alpha \leqslant_a \beta$.
	\end{itemize}
\end{enumerate}
We call this category  \emph{the category of finite filtered data} and we denote it by $\mathsf{fData}^\sfR$. 
\end{definition} 

Our motivating example for this definition is the following.
\begin{example}\label{ex:finiteRips}
	Let $X$ be a finite set of points in a metric space. Then, $\Cech(X)_\bullet$ and $\calR(X)_\bullet$ are objects  in $\mathsf{fData}^\sfR$ (see  \Cref{def:Rips} and \Cref{def:Cech}).
\end{example}
\begin{example}\label{ex:finiteMorse}
 Let $X$ be a compact smooth (or piecewise linear) manifold and $f: X\to \R$ be a smooth (or PL) Morse function. Then  the sublevel sets $f^{-1}((-\infty, t])$  are a family compact  manifolds with boundary  which can be given finite $\Delta$-complexes structures. As long as $ t \in [s, s')$ where $s$ and $s'$ are two critical values of $f$, the $\Delta$-structures can be taken to be homotopic and therefore, since they are finitely many critical values, we can get a  persistent $\Delta$-complex $(\rmC_*(f^{-1}((-\infty, t)))_{t\in \R}$ (where $C_*$ is the chain complex computing simplicial homology) which is a finite filtered data.  Its homology is the standard sublevel set $(\rmH_*(f^{-1}((-\infty, t]))_{t\in \R}$. 
\end{example}

The conditions of a finite filtered data ensures that the  interleaving distances are closed on this subcategory:

\begin{lemma}\label{lem:distance_nulle}
	Let $X$ and $Y$ be two finite filtered data and $F:\Top \to \sfC$ be a functor. We have that $\d_{\sfC}(F(X),F(Y))=0$ if, and only if, the persistent objects $F(X)$ and $F(Y)$ are isomorphic in $\sfC^{\sfR}$.
\end{lemma}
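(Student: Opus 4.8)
The forward implication is immediate: an isomorphism $F(X)\cong F(Y)$ in $\sfC^\sfR$, together with its inverse, constitutes a $0$-interleaving in the sense of \Cref{def:interleaving}, so $\d_\sfC(F(X),F(Y))=0$. The content is the converse, and here the finiteness hypotheses of \Cref{def:ffdata} are essential, since in general an infimum of $0$ need not be attained by an actual $0$-interleaving.

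First I would record that $F(X)$ and $F(Y)$ are piecewise constant. By condition (3) of \Cref{def:ffdata} the structural morphisms $X_{a\leqslant b}$ are isomorphisms outside a finite set of critical values; applying $F$, which preserves isomorphisms, the same holds for $F(X)$ and $F(Y)$. Let $c_1<\dots<c_m$ be the union of the critical values of $X$ and $Y$ and set $\delta\coloneqq\min_i(c_{i+1}-c_i)>0$; then every structural morphism of $F(X)$ or $F(Y)$ whose source and target lie in a common open interval $(c_i,c_{i+1})$ is an isomorphism. Since $\d_\sfC(F(X),F(Y))=0$, for every $\eps>0$ there is an $\eps$-interleaving, and I fix one with $\eps<\delta/2$, given by natural transformations $\mu\colon F(X)\to F(Y)[\eps]$ and $\nu\colon F(Y)\to F(X)[\eps]$ for which the commutativity of \Cref{def:interleaving} yields the identities $\nu_{r+\eps}\circ\mu_r=F(X)_{r\leqslant r+2\eps}$ and $\mu_{r+\eps}\circ\nu_r=F(Y)_{r\leqslant r+2\eps}$.

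For $r$ with $[r,r+2\eps]$ inside a single interval $(c_i,c_{i+1})$ all structural morphisms in that range are isomorphisms, so I can set
\[
\varphi_r\coloneqq (F(Y)_{r\leqslant r+\eps})^{-1}\circ\mu_r\colon F(X)_r\to F(Y)_r, \qquad \psi_r\coloneqq (F(X)_{r\leqslant r+\eps})^{-1}\circ\nu_r .
\]
Using the naturality square of $\nu$ to rewrite $\nu_r\circ(F(Y)_{r\leqslant r+\eps})^{-1}$ as $(F(X)_{r+\eps\leqslant r+2\eps})^{-1}\circ\nu_{r+\eps}$, then applying the identity $\nu_{r+\eps}\circ\mu_r=F(X)_{r\leqslant r+2\eps}$ and functoriality of $F(X)$, a short computation gives $\psi_r\circ\varphi_r=\id$, and symmetrically $\varphi_r\circ\psi_r=\id$; hence each $\varphi_r$ is an isomorphism. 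A second computation, feeding the naturality squares of $\mu$ into the corresponding diagram, shows that the $\varphi_r$ commute with the structural morphisms, so they assemble into a natural isomorphism on the interior regions $(c_i,c_{i+1}-2\eps)$, which are nonempty because $\delta>2\eps$.

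It remains to promote this to a global isomorphism in $\sfC^\sfR$, which I expect to be the main obstacle. Because each persistent object is constant, up to the canonical isomorphisms given by its structural maps, on each interval $(c_i,c_{i+1})$, the whole of $F(X)$ is recovered from its restriction to these intervals together with the jump morphisms across the $c_i$; correspondingly an isomorphism of persistent objects amounts to a family of isomorphisms on the intervals that intertwine these jump morphisms. The delicate point is therefore compatibility at the critical values: one must verify that the isomorphisms $\varphi$ obtained on two adjacent intervals are intertwined by the jump morphisms of $F(X)$ and $F(Y)$ across $c_i$. This again follows from the naturality of $\mu$ (and $\nu$) applied to a morphism $r\leqslant s$ straddling $c_i$, the bound $\eps<\delta/2$ ensuring that the shifted indices $r+\eps,s+\eps$ stay in controlled positions; transporting the interior isomorphisms to all parameters via the invertible structural maps then yields a well-defined natural isomorphism $F(X)\xrightarrow{\,\cong\,}F(Y)$, completing the proof.
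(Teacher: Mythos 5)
Your proof is correct and follows the same basic strategy as the paper's: the finiteness conditions give piecewise constancy of $F(X)$ and $F(Y)$, and an interleaving with $\eps$ smaller than (half) the minimal gap between critical values forces the interleaving maps to be invertible on each constancy interval. In fact you carry the argument further than the paper's own proof, which only verifies the objectwise isomorphisms $F(X)_t\cong F(Y)_t$ and leaves implicit the assembly into an isomorphism in $\sfC^{\sfR}$; your construction $\varphi_r=(F(Y)_{r\leqslant r+\eps})^{-1}\circ\mu_r$, the verification that $\psi_r$ is a two-sided inverse via the interleaving identities, and the naturality check across critical values (which indeed reduces to naturality of $\mu$ plus functoriality of $F(Y)$) all go through as you indicate.
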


\begin{proof}
	We denote by $r_1,\ldots,r_n$, the objects of the category $\sfR$ such that, for all $1\leqslant i \leqslant n$ and for all $\eps>0$ in $\sfR$, $F(X)_{r_i-\eps}\ncong F(X)_{r_i}$ or $F(Y)_{r_i-\eps}\ncong F(Y)_{r_i}$.
	
	Suppose that $\d_{\sfC}(F(X),F(Y))=0$. Let $t<r_n$ be in $\sfR$: there exists $1\leqslant i \leqslant n$ such that $r_i \leqslant t < r_{i+1}$. We consider  $\eps= \frac{r_{i+1}-t}{3} $. By assumption, there exists an $\eps$-interleaving between $F(X)$ and $F(Y)$:
	\[
	\begin{tikzcd}
	F(X)_t \ar[r] & F(Y)_{t+\eps} \ar[r] & F(X)_{t+2\eps}  ;
	\end{tikzcd}
	\]
	as $t+2\eps<r_{i+1}$,  then $F(X)_t \cong  F(X)_{t+2\eps}$, so $F(X)_t \cong F(Y)_{t+\eps} \cong F(Y)_{t}$. By the same argument, for $t\geqslant r_n$, $F(X)_t \cong F(Y)_t$.
\end{proof}

\subsection{Contractions in family}\label{sub:contraction}
In order to apply the homotopy transfer theorem for a persistent space given by a  finite filtered data, we need to encode some part of the data allowing to obtain the transferred structure in an explicit way. This is the role of the following of definition.

\begin{definition}[Category of transfer data]
	The category $\Trans_{\Ch}$ (resp, $\Trans_{\coCh}$) is defined as follows: its objects are $(A,H,i,p,h)$ such that
	\[
	\begin{tikzcd}
	(A,d_A)\arrow[r, shift left, "p"] \arrow[loop left, distance=1.5em,, "h"] & (H,0) \arrow[l, shift left, "i"]
	\end{tikzcd} \ ,
	\]
	where $A$ is a chain complex (resp. cochain complex), $H$ is a graded vector space, $i$ and $p$ are morphisms of chain complexes, $h$ is a linear map of homological degree $1$ (resp. homological degree $-1$) such that:
	\[
	ip -\id_A = d_Ah+hd_A
	\]
	and such that they satisfy the \emph{side conditions}:
	\[
	 \quad pi=\id_H, \quad hi=0, \quad ph=0\quad \mbox{and} \quad h^2=0.
	\]	
	
	The morphisms $(A,H,i,p,h)\to (A',H',i',p',h')$ in the category $\Trans_{\Ch}$ are defined as the morphisms $A \to A'$ of chain complexes. We also define the category $\Trans_{\Ch,\As}$ to be the subcategory of $\Trans_{\Ch}$ whose objects are $(A,H,i,p,h)$ such that $A$ is a differential graded associative algebra, and the morphisms are morphisms of differential graded associative algebras.
\end{definition}

\begin{remark}\label{R:notransfer}
	Let $\phi:(A,H,i,p,h)\to (A',H',i',p',h')$ be a morphism in $\Trans_\Ch$. 
	We do not suppose any compatibility between the morphism $\phi$ and the structural morphisms $i,p,h$ and $i',p',h'$, contrarily to the classical notion of \cite{manetti2017projective}. The reason is that we do not have a persistent inclusion of the homology of a chain complex in it, even in the simplest cases, as the following example shows. Let $X$ be the union of two points $(0,0)$ and $(1,0)$ in $\R^2$, and consider the associate Vietoris-Rips complex $\calR(X)_\bullet$. We have the table of \Cref{tab::example}. 
	
	\begin{figure}[h!] 
		\centering
		\begin{tabularx}{16cm}{c|c|c|c}
			Radius $\eps$ & Picture & $\rmC_*(\calR(X))_\eps$ & $\rmH_0(\calR(X))_\eps$ \\
			$\eps<\frac{1}{2}$ & 
			\begin{tikzpicture}[baseline=(n.base)]
			\node (n) at (0,-0.2) {};
			\pgfmathsetmacro{\RAD}{0.3pt};
			\coordinate (A) at (0,0);
			\coordinate (B) at (1,0);
			\draw[fill=gray!20, opacity=0.2, thick] (A) circle (\RAD);
			\draw[fill=gray!20, opacity=0.2, thick] (B) circle (\RAD);
			\draw[thick] (A) node {$\scriptstyle{\bullet}$} node[below] {$\scriptstyle{x_1}$} ;
			\draw[thick] (B) node {$\scriptstyle{\bullet}$} node[below] {$\scriptstyle{x_2}$} ;
			\end{tikzpicture}
			& 
			$kx_1 \oplus kx_2 \overset{d}{\longleftarrow} 0$
			& 
			$kx_1 \oplus kx_2 $
			\\
			$\eps>\frac{1}{2}$ & 
			\begin{tikzpicture}[baseline=(n.base)]
			\node (n) at (0,-0.2) {};
			\pgfmathsetmacro{\RAD}{0.7pt};
			\coordinate (A) at (0,0);
			\coordinate (B) at (1,0);
			\draw[fill=gray!20, opacity=0.2, thick] (A) circle (\RAD);
			\draw[fill=gray!20, opacity=0.2, thick] (B) circle (\RAD);
			\draw[thick, fill=gray!50, opacity=0.7] 
			(A) node {$\scriptstyle{\bullet}$} node[below] {$\scriptstyle{x_1}$} to 
			(B) node {$\scriptstyle{\bullet}$} node[below] {$\scriptstyle{x_2}$};
			\draw[thick] (0.5,-0.1) node[above]  {$\scriptstyle{x_{12}}$};
			\end{tikzpicture}
			& 
			$kx_1 \oplus kx_2 \overset{d}{\longleftarrow} kx_{12}$
			& 
			$kx_1$
		\end{tabularx}
		\caption{Simplicial chain complex and simplicial homology of the Vietoris-Rips complex associated to $\{(0,0),(1,0)\}$}
		\label{tab::example}
	\end{figure} 
	
	Suppose that we have a persistent inclusion $i_\bullet:\rmH_*(\calR(X))_\bullet \to \rmC_*(\calR(X))_\bullet$. Then, we obtain, for all $\eps< \frac{1}{2}$, the following diagram
	\[
		\begin{tikzcd}
			kx_1\oplus kx_2 \ar[r,"i_\eps","\cong"'] \ar[d,"(\eps\leqslant \eps+\frac{1}{2})"'] 
				& kx_1\oplus kx_2 \ar[d, "(\eps\leqslant \eps+\frac{1}{2})","\cong"'] \\
			kx_1 \ar[r,"i_{\eps+\frac{1}{2}}"'] & kx_1\oplus kx_2
		\end{tikzcd}
	\]
	which is \emph{not} commutative. Therefore, it cannot exist such a persistent inclusion. By the same argument, it cannot exist a persistent projection $p^\bullet:\rmH^*(\calR(X))^\bullet \to \rmC^*(\calR(X))^\bullet$ from the persistent cohomology to the persistent cochain complex.
\end{remark}

\begin{notation}
	We denote by  $\Trans_{\Ch}^{\sfR}$ the  category of functors $\sfR\to \Trans_{\Ch}$, that is the category of persistent transfer data. We also denote by  $\fTrans_{\Ch}^{\sfR}$, the full subcategory of $\Trans_{\Ch}^{\sfR}$ of objects $A_{\bullet}$ such that the set $\left\{A_r ~|~ r\in\sfR \right\}/_\sim$, where $A_\alpha \sim A_\beta$ if the structural morphism $A_{\alpha<\beta}$ is an isomorphism, is finite.
\end{notation}

The functor of simplicial complex $\rmC_*:\DCpx \rightarrow \Ch_k$ induces a functor
\[
	\rmT\rmC_*:\mathrm{fData}^\sfR \longrightarrow \fTrans_{\Ch}^\sfR 
\]
where the contraction are given by \cite[Algorithm 1]{real2009cell}: consider a $\sfR$-filtered data $X:\sfR\to\DCpx$. For all $r$ in $\sfR$, $X_r$ is totally ordered, and, as $X_a \hookrightarrow X_b$ for all $a<b$ in $\sfR$, we denote
$X_0=\left\{c_0,\ldots,c_{i_0} \right\}, X_1=\left\{c_0,\ldots,c_{i_0},c_{i_0+1} \ldots c_{i_1} \right\}$ and for all $r$ in $\sfR$,
\[
	X_r=\left\{ c_0, c_1,\ldots c_{i_0},c_{i_0+1}, \ldots, c_{i_1}, \ldots, c_{i_r} \right\}.
\]
Fix $r$ in $\sfR$, denote by $m$, the cardinal of $X_r$, we construct  algorithmically a linear map $h_m$ of homological degree one on  $(\bigoplus_{i=1}^m kc_i,\partial_m)$ as follows:
\[
\begin{array}{ll}
	&C_0\coloneqq \{c_0\},\partial_0, h_0(c_0)\coloneqq 0 \\
	&\texttt{For} \ i=1 \ \texttt{to} \ m \ \texttt{do}\\
	&\qquad C_i\coloneqq \left\{C_{i-1}\cup\{c_i\},\partial_i \right\}\\
	&\qquad \texttt{If } (\partial_i -\partial_{i-1}h_{i-1}\partial_i)(c_i)=0,\texttt{ then} \\
	&\qquad \qquad h_i(c_i)\coloneqq 0 \\
	&\qquad \qquad \texttt{For } j=0 \texttt{ to } i-1 \texttt{ do} \\
	&\qquad \qquad \qquad h_i(c_j)=h_{i-1}(c_j) \\
	&\qquad  \texttt{If } (\partial_i -\partial_{i-1}h_{i-1}\partial_i)(c_i)=\sum_{k=1}^n \lambda_k u_k \ne 0 \texttt{ with } u_1<\ldots<u_k<\ldots <u_n\in C_{i-1}, \texttt{ then} \\
	&\qquad \qquad \bar{\phi}(u_1)\coloneqq -\lambda_1^{-1}c_i \texttt{ and } \bar{\phi}(u_k)\coloneqq 0 \texttt{ otherwise} \\
	&\qquad \qquad \texttt{For } j=0 \texttt{ to } i\texttt{ do} \\
	&\qquad \qquad \qquad h_i(c_j)=(h_{i-1}+\bar{\phi}-\bar{\phi}h_{i-1}\partial_{i-1}\bar{\phi}\partial_{i-1}h_{i-1})(c_j)
	\\
	&\texttt{Output:} h\coloneqq h_m
\end{array} 
\]
Then, for each $r$ in $\sfR$, the functor $\rmC_*$ sends $X_r$ on the contraction $((\rmC_*(X_r),\partial), \mathrm{Im}(\pi),\iota,\pi,h)$ where $\pi\coloneqq \id - \partial h-h\partial$, $\iota$ is the inclusion of $\mathrm{Im}(\pi)$ in $\rmC_*(X_r)$, and $h$ given by the previous algorithm. Finally, we have define a functor
\[
	\rmT\rmC_*:\mathrm{fData}^\sfR \longrightarrow \fTrans_{\Ch}^\sfR .
\]
Composition with the functor $\Hom_{k}(-,k)$ gives us a functor
\begin{equation}\label{eqdef:TC}
	\rmT\rmC^*:\mathrm{fData}^\sfR \longrightarrow \fTrans_{\coCh,\As}^{\sfR^\op} \ 
\end{equation}
which we call the \emph{persistent transfer data dg-algebra}  functor.

\subsection{$\calA_\infty$-interleaving distance}\label{subsec:distance_Ainfty}

\begin{theorem}\label{prop:functor_to_Ainfty}
There is a functor 
	\[
	\begin{array}{rcc}
		\rmH_* : \fTrans_{\coCh,\As}^{\sfR^\op} & \longrightarrow & \ho\left(\infty\textsf{-}\Alg_{\calA_\infty}\right)^{\sfR^\op} \\
		(A,H,i,p,h)^\bullet & \longmapsto & (H,\{\mu_i\}_{i\in\N })^\bullet
		\end{array} \ 
	\] whose composition with the forgetful functor 
	$ \ho\left(\infty\textsf{-}\Alg_{\calA_\infty}\right)^{\sfR^\op} \to (\grVect)^{\sfR^{\op}}$  is the underlying persistent cohomology.  We call $\rmH_*$ the \emph{transferred $\calA_\infty$-structure} functor.
\end{theorem}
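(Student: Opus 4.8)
The plan is to apply the Homotopy Transfer Theorem (\Cref{thm::HTT}) pointwise and to realise morphisms through the composite $p'\circ\phi\circ i$ that \Cref{R:notransfer} forces upon us, checking functoriality only after passing to the homotopy category.

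First I would define $\rmH_*$ on objects. Let $\mathbf{A}=(A,H,i,p,h)^\bullet\colon\sfR^\op\to\fTrans_{\coCh,\As}$ be a persistent transfer datum. For each $r$, the cochain algebra $A^r$ is a dg-associative algebra, hence an $\calA_\infty$-algebra with $m_1=d_{A^r}$, $m_2=\cup$ and $m_{\geqslant 3}=0$; \Cref{thm::HTT} then equips $H^r=\rmH^*(A^r)$ with a transferred $\calA_\infty$-structure $\{\mu_i^r\}$ and promotes $i^r,p^r$ to mutually quasi-inverse $\infty$-quasi-isomorphisms $i^r\colon H^r\rightsquigarrow A^r$ and $p^r\colon A^r\rightsquigarrow H^r$. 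To a structural morphism, which is a strict dg-algebra (hence $\infty$-) map $\phi_{rs}\colon A^r\to A^s$ for $r\geqslant s$, I associate the class of the $\infty$-morphism $p^s\circ\phi_{rs}\circ i^r\colon H^r\rightsquigarrow H^s$ in $\ho(\infty\textsf{-}\Alg_{\calA_\infty})$. That this defines a functor $\sfR^\op\to\ho(\infty\textsf{-}\Alg_{\calA_\infty})$ rests on the invertibility of $\infty$-quasi-isomorphisms in the homotopy category (\Cref{thm:equivalence_homotopy_category}): there one has $p^r\circ i^r=\id_{H^r}$ and $i^r\circ p^r=\id_{A^r}$. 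Identities are then immediate, and for composition, taking $r\geqslant s\geqslant t$,
\[
	(p^t\circ\phi_{st}\circ i^s)\circ(p^s\circ\phi_{rs}\circ i^r)=p^t\circ\phi_{st}\circ\phi_{rs}\circ i^r=p^t\circ\phi_{rt}\circ i^r,
\]
where the first equality inserts $i^s\circ p^s=\id_{A^s}$ and the second uses $\phi_{st}\circ\phi_{rs}=\phi_{rt}$, all in $\ho(\infty\textsf{-}\Alg_{\calA_\infty})$.

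Next I would define $\rmH_*$ on a morphism of persistent transfer data, that is a natural family of strict dg-algebra maps $\eta^r\colon A^r\to A'^r$, by sending it to the family $p'^r\circ\eta^r\circ i^r\colon H^r\rightsquigarrow H'^r$. That this is a natural transformation between the two functors $\sfR^\op\to\ho(\infty\textsf{-}\Alg_{\calA_\infty})$, and that $\rmH_*$ itself satisfies $\rmH_*(\zeta\circ\eta)=\rmH_*(\zeta)\circ\rmH_*(\eta)$, are both the same one-line computation as above: one inserts a factor $i\circ p=\id$ on the intermediate algebra and uses that $\eta$ commutes strictly with the relevant structure maps. For the compatibility with the forgetful functor to $(\grVect)^{\sfR^\op}$, it suffices to observe that the underlying graded vector space of $(H,\{\mu_i\})^r$ is $H^r=\rmH^*(A^r)$ and that the linear ($f_1$) component of $p^s\circ\phi_{rs}\circ i^r$ is the composite $p^s\circ\phi_{rs}\circ i^r$ of the underlying linear maps, which is precisely the map induced by $\phi_{rs}$ on cohomology; hence the composite recovers the persistent cohomology $\rmH^*$ with its usual structural maps. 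Note that no finiteness hypothesis is needed for this construction, which is entirely pointwise.

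The main obstacle is precisely this functoriality, and its resolution explains why the target must be the \emph{homotopy} category rather than $\infty\textsf{-}\Alg_{\calA_\infty}$. By \Cref{R:notransfer}, morphisms in $\fTrans_{\coCh,\As}^{\sfR^\op}$ carry no persistent compatibility with the chosen contractions $(i,p,h)$, so one cannot transfer maps strictly; the composite $p'\circ\phi\circ i$ is the only canonical candidate, yet it fails to be strictly functorial, since $p^s\circ i^s$ and $i^s\circ p^s$ are only \emph{homotopic} to the respective identities. It is exactly the invertibility of $\infty$-quasi-isomorphisms in $\ho(\infty\textsf{-}\Alg_{\calA_\infty})$ that upgrades these homotopies to equalities and makes every diagram commute.
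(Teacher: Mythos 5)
Your proof is correct and rests on the same two pillars as the paper's: the Homotopy Transfer Theorem (\Cref{thm::HTT}) applied at each parameter value, and the formula $p^s\circ\phi_{rs}\circ i^r$ for the induced structural morphisms, with functoriality obtained only after passage to $\ho(\infty\textsf{-}\Alg_{\calA_\infty})$ because the contractions are not persistent (\Cref{R:notransfer}). The route differs in one organisational respect: the paper uses the finiteness condition built into $\fTrans$ to single out finitely many critical values $t_0,\dots,t_n$, transfers the structure there, takes identities as structural maps on each interval $[t_j,t_{j+1})$, and only constructs the comparison maps between consecutive critical values; you instead define the transferred structure and the morphism $p^s\circ\phi_{rs}\circ i^r$ for every pair $r\geqslant s$ and verify the functor equations directly by inserting $i^s\circ p^s=\id_{A^s}$. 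Your version is more uniform, correctly observes that the finiteness hypothesis is not needed for this particular statement (it matters elsewhere, e.g.\ for \Cref{lem:distance_nulle} and for producing the contractions algorithmically), and makes explicit two points the paper's proof leaves implicit: the action of $\rmH_*$ on morphisms of persistent transfer data and the compatibility with the forgetful functor to $\grVect$. The only point to phrase more carefully is your appeal to \Cref{thm:equivalence_homotopy_category} for $p^r\circ i^r=\id_{H^r}$ and $i^r\circ p^r=\id_{A^r}$: invertibility of $\infty$-quasi-isomorphisms only gives that $i^r$ has \emph{some} inverse in the homotopy category, and one should add that $p^r$ is that inverse because the Homotopy Transfer Theorem produces $i_\infty$ and $p_\infty$ as homotopy inverses of one another — which is exactly the fact the paper's own proof invokes, so this is a matter of attribution rather than a gap.
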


\begin{proof}
	Let $(A,H,i,p,h)^\bullet$ be an object in $\fTrans_{\coCh,\As}^{\sfR^\op}$, and consider $t_0,\ldots,t_n$ in $\sfR$ such that, for all $1 \leqslant j \leqslant n$, for all $\eps>0$, $A_{t_j-\eps} \ncong A_{t_j}$ as differential graded algebras.
	By the Homotopy Transfer Theorem (see \Cref{thm::HTT}), for each $t_j$, $H_{t_j}$ has an $\calA_\infty$-structure, and for all $t_i\leqslant t < t_{j+1}$, as $H_{t_j}$ and $H_{t}$ are isomorphic as graded vector spaces. We can thus put on $H_{t}$ the same $\calA_{\infty}$-structure as on $H_{t_j}$ and take the identity as the structural morphism between them:
	\[
		(t_j\leqslant t) : H_{t_j} \overset{\id}{\longrightarrow} H_t \ .
	\]
	We need to construct the structural morphism $ \eta_i^H\coloneqq (t_j\leqslant t_{j+1})_H:H_{t_{j}} \to H_{t_{j+1}} $. If we denote by $\eta^A_i$, the structural morphism $A_{j_i} \to A_{t_{j+1}}$, we define $ \eta_i^H$ by the following composition:
	\[
	\begin{tikzcd}
		H_{t_{j}} \arrow[d,  "i_{j}"'] \arrow[r, dotted, "\eta_i^H"] & H_{t_{j+1}} \\
		A_{t_j} \arrow[r,"\eta_i^A"']  & A_{t_{j+1}} .\arrow[u, shift left, "p_{j+1}"']
	\end{tikzcd}
	\]
	Then, by \Cref{thm::HTT}, $i_{j+1}$ is a $\infty$-quasi-isomorphism and $p_{j+1}$ is its inverse, so the following diagram is homotopy commutative
	\[
	\begin{tikzcd}
	H_{t_{j}} \arrow[d,  "i_{j}"'] \arrow[r,  "\eta_i^H"] & H_{t_{j+1}} \arrow[d, "i_{j+1}"] \\
	A_{t_j} \arrow[r,"\eta_i"']  & A_{t_{j+1}} 
	\end{tikzcd} \ .
	\]
So we have proved that the datum of $H_\bullet$ with transferred structure and morphisms defined previously is a persistent object in the category $\ho\left(\infty\textsf{-}\Alg_{\calA_\infty}\right)$.
\end{proof}

Combining the persistent transfer data dg-algebra functor~\eqref{eqdef:TC} and the transferring $\calA_\infty$-structure functor of \Cref{prop:functor_to_Ainfty}, we obtain the following definition.

\begin{definition}\label{def:Ainftycochainalgebra} 
	We define the \emph{$\calA_\infty$-algebra homology functor} as the composition
	\[
	 \rmH_* \circ \rmT\rmC^*: \mathsf{fData}^\sfR \longrightarrow \ho\left(\infty\textsf{-}\Alg_{\calA_\infty}\right)^{\sfR^\op}.
	\]
\end{definition}

It is therefore a functor from finite filtered data to the (homotopy) category fo $\calA_\infty$-algebras. This functor is the $\calA_\infty$ analogue of the cochain algebra functor from persistent spaces to (homotopy classes of) differential graded algebras (see~\Cref{def:cupproduct} and~\Cref{nota:cochainfunctor}).

\begin{definition}[$\calA_\infty$-interleaving distance]
	Let $X$ and $Y$ be two filtered data. We define the \emph{$\calA_\infty$-interleaving distance} by 
	\[
	\d_{\calA_\infty}(X,Y) \coloneqq
	\d_{\ho(\infty\textsf{-}\Alg_{\calA_\infty})}(\rmH_*(\rmT\rmC^*(X)),\rmH_*(\rmT\rmC^*(Y))).
	\] 
	where the functor $\rmH_*\circ \rmT\rmC^*$ is given by~\Cref{def:Ainftycochainalgebra}. 
\end{definition}

The $A_\infty$-interleaving distance realizes the interleaving distance in the homotopy category of differential graded associative algebras, see~\Cref{prop:inequalities_Ainfty} below. The point of the $\calA_\infty$-distance is that we only need to consider actual $\calA_\infty$-morphisms (instead of zigzags passing to arbitrary intermediate dg-algebras) to study interleavings.
\begin{example}
By examples~\ref{ex:finiteRips} and~\ref{ex:finiteMorse}, we can associate a $\calA_\infty$-persistence structure and $\calA_\infty$-interleaving distance to Rips and Mayer-Vietoris complexes of a data set as well as to the sublevel sets of Morse functions on compact smooth manifolds. 
\end{example}

\begin{remark}
	F. Belch\'i \emph{et al.} give a definition of $\calA_\infty$-barcode, and consider the associated bottleneck distance (see \cite{belchi2015,belchi2015,belchi2017,belchi2019}). They  work with the transferred $\calA_\infty$-coalgebra structure $\{\Delta_n\}$ of the homology of space and construct the $\calA_\infty$-barcode using the kernel of the coproducts $\Delta_n$. However, this definition of barcode has a drawback: they only consider the kernel of the first coproduct $\Delta_n$ which is not trivial because the kernel of the higher coproducts depend highly on the transfer data (see \cite[Section 3]{belchi2017}). Therefore this definition lose a large part of the $\calA_\infty$-structure in general and thus some reachable topological information.
\end{remark}

\begin{proposition}\label{prop:inequalities_Ainfty}
	Let $X$ and $Y$ be two filtered data.
	\begin{enumerate} 
	\item We have the inequality
	$
	\d_{\calA_\infty}(X,Y)
	\leqslant
	\d_{\Alg_{\As}}\left(\rmC^*(X),\rmC^*(Y)\right) \ .
	$
	\item There is the equality $
	\d_{\calA_\infty}(X,Y)
	=
	\d_{\ho(\Alg_{\As})}(\rmC^*(X),\rmC^*(Y)) \ .
	$
	\end{enumerate}
\end{proposition}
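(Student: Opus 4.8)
The plan is to deduce both statements from the equivalence of homotopy categories (\Cref{thm:equivalence_homotopy_category}) combined with the functoriality of the interleaving distance (\Cref{lem::apply_functor}). I would prove the equality (2) first, since the inequality (1) falls out of it for free. Throughout, $\rmC^*(X)$ denotes the persistent dg-algebra $\rmC^*_{\As}(X)$, which we view in $\ho(\Alg_{\As})^{\sfR^\op}$ by applying the quotient functor $\Alg_{\As}\to\ho(\Alg_{\As})$ pointwise.

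The first key point is that an equivalence of categories is automatically an isometry for the interleaving distance. Write $E\colon \ho(\Alg_{\As}) \xrightarrow{\ \simeq\ } \ho(\infty\textsf{-}\Alg_{\calA_\infty})$ for the equivalence of \Cref{thm:equivalence_homotopy_category} and let $E'$ be a quasi-inverse, so that $E'E\cong\id$ and $EE'\cong\id$. Post-composition yields an equivalence $E_*$ of the persistent categories $\ho(\Alg_{\As})^{\sfR^\op}\to \ho(\infty\textsf{-}\Alg_{\calA_\infty})^{\sfR^\op}$, with quasi-inverse $E'_*$ induced by $E'$ (the natural isomorphisms whisker to $E'_*E_*\cong\id$ and $E_*E'_*\cong\id$). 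Applying \Cref{lem::apply_functor} to $E_*$ gives $\d_{\ho(\infty\textsf{-}\Alg_{\calA_\infty})}(E\rmC^*(X),E\rmC^*(Y))\leqslant \d_{\ho(\Alg_{\As})}(\rmC^*(X),\rmC^*(Y))$, and applying it to $E'_*$ gives the reverse inequality once we note that $E'E\rmC^*(X)\cong\rmC^*(X)$ and that the interleaving distance of \Cref{def:interleaving} depends on a persistent object only up to isomorphism in the target category. Hence
\[
\d_{\ho(\infty\textsf{-}\Alg_{\calA_\infty})}\bigl(E\rmC^*(X),E\rmC^*(Y)\bigr)=\d_{\ho(\Alg_{\As})}\bigl(\rmC^*(X),\rmC^*(Y)\bigr).
\]

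It remains to identify $E\rmC^*(X)$ with $\rmH_*(\rmT\rmC^*(X))$ as persistent objects, after which the displayed equality becomes statement (2). Pointwise, the Homotopy Transfer Theorem (\Cref{thm::HTT}) supplies, for each $r$, an $\infty$-quasi-isomorphism $i_r\colon \rmH_*(\rmT\rmC^*(X))_r \rightsquigarrow \rmC^*(X_r)$, which is an isomorphism in $\ho(\infty\textsf{-}\Alg_{\calA_\infty})$ realizing the comparison produced by $E$. The substance of the proof of \Cref{prop:functor_to_Ainfty} is precisely that the squares relating the $i_r$ to the structural morphisms commute in the homotopy category; consequently the $i_r$ assemble into a natural isomorphism of persistent objects $\rmH_*(\rmT\rmC^*(X))\cong E\rmC^*(X)$. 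Invariance of the interleaving distance under such isomorphisms then turns the displayed equality into $\d_{\calA_\infty}(X,Y)=\d_{\ho(\Alg_{\As})}(\rmC^*(X),\rmC^*(Y))$, which is (2). Statement (1) follows at once: the quotient functor $\Alg_{\As}\to\ho(\Alg_{\As})$ is a functor, so \Cref{lem::apply_functor} gives $\d_{\ho(\Alg_{\As})}(\rmC^*(X),\rmC^*(Y))\leqslant \d_{\Alg_{\As}}(\rmC^*(X),\rmC^*(Y))$, and combining this with (2) yields the claimed bound. I expect the main obstacle to be the persistence (naturality) of the transfer isomorphism $\rmH_*(\rmT\rmC^*(X))\cong E\rmC^*(X)$ rather than the formal isometry argument; but this naturality is exactly what the construction underlying \Cref{prop:functor_to_Ainfty} already guarantees, so the genuine difficulty has been dispatched there.
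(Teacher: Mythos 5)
Your proposal is correct and follows essentially the same route as the paper's proof: both rest on the Homotopy Transfer Theorem to identify $\rmH_*(\rmT\rmC^*(X))$ with $\rmC^*(X)$ pointwise in $\ho(\infty\textsf{-}\Alg_{\calA_\infty})$, on the persistence/naturality of these identifications established in \Cref{prop:functor_to_Ainfty}, and on the equivalence $\ho(\infty\textsf{-}\Alg_{\calA_\infty})\cong\ho(\Alg_{\As})$ of \Cref{thm:equivalence_homotopy_category}, with \Cref{lem::apply_functor} supplying the inequalities. The only differences are organizational and cosmetic: you deduce (1) from (2) via the quotient functor $\Alg_{\As}\to\ho(\Alg_{\As})$ while the paper proves (1) directly from \Cref{lem::apply_functor} and \Cref{prop:functor_to_Ainfty}, and you spell out two points the paper leaves implicit, namely that an equivalence of categories is an isometry for interleaving distances and that the pointwise $\infty$-quasi-isomorphisms must assemble into an isomorphism of persistent objects.
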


What we are really interested about is point (2) of the proposition.

\begin{proof}
	\begin{enumerate}
		\item It is an immediate consequence of \Cref{lem::apply_functor} and \Cref{prop:functor_to_Ainfty}.
		\item By the Homotopy Transfer Theorem (see \Cref{thm::HTT}),  for each $r$ in $\sfR$, we have the following two $\infty$-quasi-isomorphisms
		\[
		\begin{tikzcd}
		\rmC^*(X)^r \arrow[r, squiggly, shift left=1.5, "p^r_\infty"] & 
		\rmH^*(X)^r \arrow[l, squiggly, shift left=1.5, "i^r_\infty"]
		\end{tikzcd} \ 
		\] which are quasi-inverse of each others. 
		Therefore, for each $r$ in $\sfR$, $\rmH^*(X)^r$ and $\rmC^*(X)^r$ are isomorphic in the homotopy category of $\calA_\infty$-algebras so that we have
		\[
		\d_{\calA_\infty}(X,Y)
		=
		\d_{\ho(\infty\textsf{-}\Alg_{\calA_\infty})}(\rmC^*(X),\rmC^*(Y)) \ .
		\]
		By the \Cref{thm:equivalence_homotopy_category}, we have $\ho(\infty\textsf{-}\Alg_{\calA_\infty}) \cong \ho(\Alg_{\As})$, so we deduce the result.
	\end{enumerate}
\end{proof}

\begin{remark}
	Recall that to define the distance $\d_{\ho(\Alg_{\As})}(\rmC^*(X),\rmC^*(Y))$, we do not need any finiteness assumption on the functors $X,Y: \sfR \to \Top$.
\end{remark}

\begin{example}[Borromean rings]\label{ex32}\label{ex:Borromeandiscret}
	We now gives a finite filtered data version of \Cref{ex:borromean}, following the strategy of \Cref{ex:torus_sphere}.

 We fix an embedding of the borromean rings $B_3$ and the trivial entanglement $U_3$ in the $3$-sphere $S^3\subset \R^4$ of radius $R>0$, see~\Cref{ex:borromean}.

 Let  $0<\ell< \frac{R}{2}$, such that 
\[
	(B_3)_\ell \coloneqq \bigcup_{x\in B_3}	\rmB(x,\ell) \simeq B_3
	\ \mbox{ and } \
	(U_3)_\ell \coloneqq \bigcup_{x\in B_3}	\rmB(x,\ell) \simeq U_3 \ ,
\]
so the choice of the positive number $\ell$ depends of the choice of the embeddings of $B_3$ and $U_3$ in $S^3$, but can be made to be large enough provided $R$ is. For $\eps \geqslant 0$, we denote
\[
	X^\ell_\eps \coloneqq \bigcup_{x \in  (S^3 \setminus (B_3)_\ell)} \rmB(x,\eps)
	\qquad \mbox{and} \qquad 
	Y^\ell_\eps \coloneqq \bigcup_{y \in  (S^3 \setminus (U_3)_\ell)} \rmB(y,\eps) \ .	
\] 
Remark that, for $\eps < \ell$, we have homotopy equivalences $X^\ell_\eps \simeq S^3 \setminus (B_3)_\ell$ and $Y^\ell_\eps \simeq S^3 \setminus (U_3)_\ell$ ; for $\ell <\eps < 1$, $X^\ell_\eps \simeq S^3$ and $Y^\ell_\eps \simeq S^3$ ; and, for $\eps \geqslant 1$, $X^\ell_\eps$ and $Y^\ell_\eps$ are contractile. Further $S^3 \setminus (B_3)_\ell$ and $S^3 \setminus (U_3)_\ell$ are compact subspaces of $\R^4$. 

\smallskip

Now we construct discretizations of those subspaces of $\R^4$.  
Fix $0< \alpha \ll  \ell$ and we choose a finite cover by open balls (in $\R^4$) of radius $\alpha$ of $X^\ell_\alpha$ and $Y^\ell_\alpha$:
\[
	X^\ell_\alpha \subset \bigcup_{i \in  I} \rmB(x_i,\alpha)
	\qquad \mbox{and} \qquad 
	Y^\ell_\alpha \subset \bigcup_{j \in  J} \rmB(y_j,\alpha) 	
\]
where $I$ and $J$ are finite. We can then define the discrete spaces
\[
	D_X^{\alpha,\ell} = \bigcup_{i \in  I} \{x_i\}
	\qquad \mbox{and} \qquad 
	D_Y^{\alpha,\ell} = \bigcup_{j \in  J} \{y_j\}.
\]
We can think these spaces as noisy discretisations of our spaces $S^3\setminus (B_3)_\ell$ and $S^3\setminus (U_3)_\ell$. Remark that, for $\alpha \leqslant \eps < \ell-\alpha$, we have the following homotopy equivalences 
\begin{equation}\label{eq:htpyequiv}
	\Cech(D_X^{\alpha,\ell})_{\eps} \simeq 	X^\ell_\alpha \simeq S^3\setminus (B_3)_\ell
	\qquad \mbox{and} \qquad 
	\Cech(D_Y^{\alpha,\ell})_{\eps} \simeq 	Y^\ell_\alpha \simeq S^3\setminus (U_3)_\ell.
\end{equation}

\begin{figure}[h!]
	\begin{tikzpicture}
	\draw (0.7,0.4) -- (11.3,0.4);
	\draw (1,0.4) node {|} node[below=1.5] {$\scriptscriptstyle{0}$};
	\draw[fill,color=gray!10] (1,0.8) rectangle (2,3);
	\draw (1.5,1.9) node {\tiny{"noise"}};
	\draw (2,0.4) node {|} node[below=1.5] {$\scriptscriptstyle{\alpha}$};
	\draw (5,0.4) node {|} node[below=1.5] {$\scriptscriptstyle{\ell-\alpha}$};
	\draw (6,0.4) node {|} node[below=1.5] {$\scriptscriptstyle{\ell+\alpha}$};
	\draw (9,0.4) node {|} node[below=1.5] {$\scriptscriptstyle{1-\alpha}$};
	\draw (10,0.4) node {|} node[below=1.5] {$\scriptscriptstyle{1+\alpha}$};
	\draw[blue] (2,1.5) -- (5,1.5) ;
	\draw[blue] (2,1.6) -- (5,1.6) ;
	\draw[blue] (2,1.7)  node[above right] {$\scriptscriptstyle{\rmH^1}$} -- (5,1.7);
	\draw[red] (2,2.2) -- (5,2.2) ;
	\draw[red] (2,2.3)  node[above right] {$\scriptscriptstyle{\rmH^2}$} -- (5,2.3) ;
	\draw[purple] (6,2.6) -- (9,2.6) node[above left] {$\scriptscriptstyle{\rmH^3}$};
	\draw (2,1) -- (11,1) node[above left] {$\scriptscriptstyle{\rmH^0}$};
	\draw[fill,color=gray!10] (5,1.2) rectangle (6,3);
	\draw (5.5,2.1) node {\tiny{"noise"}};
	\draw[fill,color=gray!10] (9,2.2) rectangle (10,3);
	\draw (9.5,2.6) node {\tiny{"noise"}};
	\end{tikzpicture}
	\caption{A part of the barcode of $\rmH^*(\Cech(D_X^{\alpha,\ell})_\bullet)$ and $\rmH^*(\Cech(D_Y^{\alpha,\ell})_\bullet)$}
	\label{fig::cpx_borromean}
\end{figure}

\begin{proposition}\label{Prop:Borromean}
 For any $0<\alpha <\ell$, one has
 \[
	\d_{\grVect}\left(\Cech(D_X^{\alpha,\ell}),\Cech(D_Y^{\alpha,\ell})\right) \leqslant \d_{\As}\left(\Cech(D_X^{\alpha,\ell}),\Cech(D_Y^{\alpha,\ell})\right) \leqslant 2\alpha \ , \ .
\]
while \[
	\frac{\ell-2\alpha}{2} \leqslant \d_{\calA_\infty}\left( \Cech(D_X^{\alpha,\ell}),\Cech(D_Y^{\alpha,\ell})\right)\leqslant \frac{\ell+\alpha}{2} \ .
\]
\end{proposition}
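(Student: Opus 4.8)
The plan is to follow the blueprint of \Cref{Prop:torusvswedge}, replacing the cup-product obstruction by the triple Massey product $m_3$ and the category $\AsAlg$ by $\ho(\infty\textsf{-}\Alg_{\calA_\infty})$. The inequality $\d_{\grVect}\leqslant\d_{\As}$ is an instance of \Cref{prop:inequalities}. For $\d_{\As}\leqslant 2\alpha$ I would argue as for the first inequality of \Cref{Prop:torusvswedge}: by \eqref{eq:htpyequiv} and \Cref{ex:borromean}, outside the two noise windows $[\ell-\alpha,\ell+\alpha]$ and $[1-\alpha,1+\alpha]$ (each of width $2\alpha$) the copersistent cohomologies of $\Cech(D_X^{\alpha,\ell})$ and $\Cech(D_Y^{\alpha,\ell})$ are isomorphic as graded \emph{algebras}, since in both complements every cup product of positive-degree classes vanishes. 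Hence for each $\eta>0$ a shift of $2\alpha+\eta$ skips over each window and matches isomorphic graded algebras on either side, producing a $(2\alpha+\eta)$-interleaving in $\AsAlg$; letting $\eta\to 0$ gives the bound.

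For the upper bound $\d_{\calA_\infty}\leqslant\frac{\ell+\alpha}{2}$ I would exploit that a shift of $\ell+\alpha$ carries the complement phase $[\alpha,\ell-\alpha)$ past the first noise window into the $S^3$-phase $(\ell+\alpha,1-\alpha)$. There both objects equal the formal algebra $\rmH^*(S^3)$, whose transferred $\calA_\infty$-structure is trivial, and the structure maps out of $[\alpha,\ell-\alpha)$ kill the (dying) $\rmH^1$- and $\rmH^2$-classes while missing the freshly born $\rmH^3$-class. Thus $\eta_{\ell+\alpha}$ factors through the common degree-zero part for both objects, and one builds the two interleaving $\infty$-morphisms through this shared augmentation/unit factorisation exactly as in the $\frac{1}{2}$-bound of \Cref{Prop:torusvswedge}.

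The heart of the statement is the lower bound $\frac{\ell-2\alpha}{2}\leqslant\d_{\calA_\infty}$. I would suppose, for contradiction, that there is an $\eps$-interleaving in $\ho(\infty\textsf{-}\Alg_{\calA_\infty})$ with $\eps<\frac{\ell-2\alpha}{2}$. Choosing base points in $[\alpha,\ell-\alpha)$, the condition $\alpha+2\eps<\ell-\alpha$ keeps the relevant structure maps $\eta_{2\eps}$ inside this phase, on which each object is constant, with value $\rmH^*(S^3\setminus B_3)$ resp.\ $\rmH^*(S^3\setminus U_3)$ and all structure maps isomorphisms. By naturality the two interleaving $\infty$-morphisms are then base-point independent and mutually inverse, so they yield an isomorphism $\rmH^*(S^3\setminus U_3)\cong\rmH^*(S^3\setminus B_3)$ in $\ho(\infty\textsf{-}\Alg_{\calA_\infty})$. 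By \Cref{thm:equivalence_homotopy_category} and \Cref{thm::HTT} such an isomorphism is carried by an $\infty$-quasi-isomorphism, hence an $\infty$-isomorphism as the differentials vanish; this contradicts \Cref{ex:borromean}, where $\rmH^*(S^3\setminus B_3)$ has a non-trivial $m_3$ and $\rmH^*(S^3\setminus U_3)$ does not. So no such $\eps$ exists.

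The main obstacle is exactly this last step: the genuinely $\calA_\infty$-theoretic input is that an interleaving of small enough shift can be \emph{inverted} on the constant phase, turning the one-sided comparison available in $\ho(\AsAlg)$ into a bona fide $\infty$-isomorphism, against which the non-vanishing Massey product of the Borromean complement is a real obstruction. I would also check the harmless numerics ($\alpha<\ell/2$, and disjointness of the two windows) guaranteeing that the phases used above are non-degenerate.
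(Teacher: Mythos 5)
Your proposal is correct and follows essentially the same route as the paper's proof: a $2\alpha$-interleaving of graded algebras outside the noise windows for the first bound, a trivial interleaving past the $\rmH^1,\rmH^2$-killing shift for the upper $\calA_\infty$-bound, and for the lower bound the key observation that on the constant phase $[\alpha,\ell-\alpha]$ the interleaving maps become mutually inverse $\infty$-morphisms, hence $\infty$-isomorphisms, contradicting the non-trivial $m_3$ of the Borromean complement. The only cosmetic difference is that the paper deduces invertibility directly from $\mu_1,\nu_1$ being linear isomorphisms rather than routing through \Cref{thm:equivalence_homotopy_category}.
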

In particular as soon as $\alpha< \frac{\ell}{4}$, we have
\[
	\d_{\grVect}\left(\Cech(D_X^{\alpha,\ell}),\Cech(D_Y^{\alpha,\ell})\right)
	<\d_{\calA_\infty}\left(\Cech(D_X^{\alpha,\ell}),\Cech(D_Y^{\alpha,\ell})\right).
\]
and the smaller is $\alpha$, the bigger is the difference between the two distances: the associative and bottleneck distance converging to zero while the $\calA_\infty$ one goes to $\frac{\ell}{2}$.

\begin{proof} Recall that the complement in the $3$-sphere $S^3\subset \R^4$  of the Borromean rings $B_3$  and the trivial entanglement of three rings $U_3$  have isomorphic (trivial) cohomology algebras  
	where 
	\[
		\rmH^0(S^3 \setminus U_3) = k\ , \quad \rmH^1(S^3 \setminus U_3) =k^3 \quad \mbox{ and } \rmH^2(S^3 \setminus U_3) = k^2 \ .
	\] 
We thus get from the homotopy equivalences~\eqref{eq:htpyequiv} that, 	
 for all $\eps \in [\alpha,\ell-\alpha] \cup [\ell+\alpha, 1]$, we have a graded algebra isomorphism $\rmH^*(\Cech(D_X^{\alpha,\ell})) \underset{\AsAlg}{\cong} \rmH^*(\Cech(D_Y^{\alpha,\ell}))$. Therefore 
\[
	\d_{\As}\left(\Cech(D_X^{\alpha,\ell}),\Cech(D_Y^{\alpha,\ell})\right) \leqslant 2\alpha \ ,
\]
because one can construct an $2\alpha$-interleaving in the category $\AsAlg$ between $\rmH^*(\Cech(D^{\alpha,\ell}_X))$ and $\rmH^*(\Cech(D^{\alpha,\ell}_Y))$.
The first inequalities follows.

\smallskip

Now, note that the homotopy equivalences~\eqref{eq:htpyequiv} imply that the canonical structure maps 
\[\rmH^*(\Cech(D^{\beta,\ell}_X)) \to \rmH^*(\Cech(D^{\tau,\ell}_X)), \quad \mbox{and} \quad
\rmH^*(\Cech(D^{\beta,\ell}_Y)) \to \rmH^*(\Cech(D^{\tau,\ell}_Y))\] are the identity maps for all $\alpha\leqslant \tau\leqslant \beta \leqslant \ell-\alpha$.
Suppose that there is $\eps < \frac{\ell-2\alpha}{2}$ such that there exists a $\eps$-interleaving in the category $\infty\textsc{-}\Alg_{\calA_\infty}$ between $\rmH^*(\Cech(D^{\alpha,\ell}_X))$ and $\rmH^*(\Cech(D^{\alpha,\ell}_Y))$ and write $\mu:\rmH^*(\Cech(D^\alpha_X))_\alpha \to \rmH^*(\Cech(D^\alpha_Y))_{\alpha+\eps}$, 
$u:\rmH^*(\Cech(D^\alpha_Y))_\alpha \to \rmH^*(\Cech(D^\alpha_X))_{\alpha+\eps} $ for the interleaving maps. Since those are persistence modules maps ant the structure maps are the identity in the range $[\alpha, \ell- \alpha]$, we get that for $\eps < \frac{\ell-2\alpha}{2}$, we have that $\mu[\eps] =\mu$ and $\nu[\eps]=\nu$.
 Therefore we  have the following diagrams (of $\calA_\infty$-algebras)
\[
	\begin{tikzcd}
		& \rmH^*(\Cech(D^\alpha_Y))_{\alpha+\eps} \arrow[rd,squiggly,"\nu"]& \\
		\rmH^*(\Cech(D^\alpha_X))_\alpha  \arrow[rr,"="'] \arrow[ru,squiggly, "\mu"] && \rmH^*(\Cech(D^\alpha_X))_{\alpha+2\eps}
	\end{tikzcd} \ ,\]\[ \begin{tikzcd}
		& \rmH^*(\Cech(D^\alpha_X))_{\alpha+\eps} \arrow[rd,squiggly,"\mu"]& \\
		\rmH^*(\Cech(D^\alpha_Y))_\alpha  \arrow[rr,"="'] \arrow[ru,squiggly, "\nu"] && \rmH^*(\Cech(D^\alpha_Y))_{\alpha+2\eps} \ .
	\end{tikzcd}
\]
that imply that $\calA_\infty$-morphisms $\mu$ and $\nu$ are $\infty$-isomorphims (See~\Cref{def:Ainfty} and~\ref{def:Ainftymaps} for the notations). Indeed, this diagram imposes that the underlying linear morphisms $\mu_1$ and $\nu_1$ are isomorphisms of graded vector spaces which implies the result by~\Cref{def:Ainftymaps}.  
This is a contradiction because $	\rmH^*(\Cech(D^\alpha_X))_\alpha \cong \rmH^*(S^3\setminus (B_3)_\ell)$ and $	\rmH^*(\Cech(D^\alpha_Y))_{\alpha+\eps}\rmH^*(S^3\setminus (U_3)_\ell)$ are not isomorphic as $\calA_\infty$-algebras.  Hence
\[
	\d_{\As}\left( \Cech(D_X^\alpha),\Cech(D_Y^\alpha)\right)\geqslant \frac{\ell-2\alpha}{2} \ .
\]
On the other hand for $\eps \geq \frac{\ell +\alpha}{2}$, one has a trivial interleaving since $\rmH^*(\Cech(D^\alpha_Y))_{t+2\eps}=0=\rmH^*(\Cech(D^\alpha_X))_{t+2\eps}$ in degrees $1,2$ for $t\in [0, \ell +\alpha]$. This gives the last inequality.
\end{proof}
\end{example}

\begin{example}[Concrete computations for \Cref{ex:borromean}] \label{ex:borromean_compute}
Following the strategy of \Cref{ex:torus_sphere} and \Cref{ex:torus_sphere_comput}, we use \Cref{ex:Borromeandiscret} to construct two discretes spaces $\widetilde{X}$ and $\widetilde{Y}$ such that $\d_{\As}(\Alpha(\widetilde{X})_\bullet,\Alpha(\widetilde{Y})_\bullet)$ is small compared to $\d_{\calA_\infty}(\Alpha(\widetilde{X})_\bullet,\Alpha(\widetilde{Y})_\bullet)$, where $\Alpha(D)$ denote the alpha complex of the discrete space $D$ (see \Cref{not:alphacomplex}).

For the rest of this example, we fix the radius $R >0$  of our sphere $S^3$ centered in $0$. We denote by 
\[
	\begin{array}{rccc}
		\Phi \colon &  \R^3 & \longrightarrow & \R^4 \\
		& u = (x,y,z) & \longmapsto	& \left( \frac{2Rx}{1+\|u\|_{2}}, \frac{2Ry}{1+\|u\|_{2}}, \frac{2Rz}{1+\|u\|_{2}},\frac{R(\|u\|_{2}-1)}{1+\|u\|_{2}}  \right)
	\end{array}
\]
the stereographic projection from $\R^3$ to $S^3$. Let $n$ be an even integer $n\in 2\N^*$, we consider the following discretisation of the $S^3$ sphere with radius $R$
\[
	\widetilde{S^3} \coloneqq 
	\left\{
		{\scriptstyle
		\left(R\cos(\frac{i\pi}{n})\sin(\frac{k\pi}{n}),
		R\sin(\frac{i\pi}{n})\sin(\frac{k\pi}{n}),
		R\cos(\frac{j\pi}{n})\cos(\frac{k\pi}{n}),
		R\cos(\frac{j\pi}{n})\cos(\frac{k\pi}{n})\right)
		}
		~\big|~ i,j \in \llbracket 0,2n\rrbracket, \ k\in \llbracket -\frac{n}{2},\frac{n}{2}\rrbracket
	\right\} \ .
\]
Now, we fix two real numbers $0<r_1 <r_2$ and an integer $N\in\N^*$ with $n \ll N$. We consider the discretisation of the borromean rings in $\R^3$ given by 
\[
	\widetilde{B_3} \coloneqq
	\left\{
		{\scriptstyle (0,r_1\cos(\frac{i\pi}{N}),r_2\sin(\frac{i\pi}{N}))} 
		|i\in \llbracket 0,2N \rrbracket
	\right\}
	\sqcup 
	\left\{
		{\scriptstyle{}(r_2\cos(\frac{i\pi}{N}),0,r_1\sin(\frac{i\pi}{N}))}
		|i\in \llbracket 0,2N \rrbracket
		\right\}
	\sqcup 
	\left\{
		{\scriptstyle{}(r_1\cos(\frac{i\pi}{N}),r_2\sin(\frac{i\pi}{N}),0)}
		|i\in \llbracket 0,2N \rrbracket
	\right\} \ ,
\]
and the discretisation of trivial entanglement in $\R^3$
\[
	\widetilde{U_3} \coloneqq 
	\left\{
		{\scriptstyle (0,r_1\cos(\frac{i\pi}{N}),r_2\sin(\frac{i\pi}{N}))} 
		|i\in \llbracket 0,2N \rrbracket
	\right\}
	\sqcup 
	\left\{
		{\scriptstyle (r_1\cos(\frac{i\pi}{N}),0,r_1\sin(\frac{i\pi}{N}))}
		|i\in \llbracket 0,2N \rrbracket
		\right\}
	\sqcup 
	\left\{
		{\scriptstyle(r_2\cos(\frac{i\pi}{N}),r_2\sin(\frac{i\pi}{N}),0)}
		|i\in \llbracket 0,2N \rrbracket
	\right\} \ .
\]	
We choose a positive real number $0<L<\frac{R}{2}$ such that 
\[
	\bigcup_{x\in \Phi(\widetilde{B_3})} \rmB(x,L) \simeq 
	S^1 \amalg S^1 \amalg S^1 \simeq 
	\bigcup_{x\in \Phi(\widetilde{U_3})} \rmB(x,L)
\]
which corresponds to the radius of the thickening of the rings embedded in $\R^4$, and depends 	of $n,N,R$.

We define the discrete spaces
\[
	\widetilde{X} \coloneqq \widetilde{S^3} \setminus \left(\bigcup_{x \in \Phi(\widetilde{B_3})} B(x,L)\right) 
	\qquad \mbox{ and } \qquad 
	\widetilde{Y} \coloneqq  \widetilde{S^3} \setminus \left(\bigcup_{x \in \Phi(\widetilde{U_3})} B(x,L)\right) \ .
\]

Using the librairy \texttt{Gudhi}, we compute the barcode and the persistent diagrams of the alpha complexes of $\widetilde{X}$ and $\widetilde{Y}$ ; see \Cref{fig:barcode_B3}, \ref{fig:barcode_U3}, \ref{fig:persistent_B3} and \ref{fig:persistent_U3}, where the computation is maked coefficient in $\mathbb{F}_{11}$ and with the following choice of parameters: $n=40$, $R=20$, $r_1 = 0.7$, $r_2=3.2$, $N=100$ and $L=5$ (for the barcode diagrams, one just show the twenty bigger bars). We also compute the bottleneck distance between persistent diagrams in each degree, which gives us :
\[
	\begin{aligned}
	\mathrm{d}_{\mathrm{bottle}}(\rmH^0(\Alpha(\widetilde{X})), \rmH^0(\Alpha(\widetilde{Y}))) & \approx 0.048 \\
	\mathrm{d}_{\mathrm{bottle}}(\rmH^1(\Alpha(\widetilde{X})), \rmH^1(\Alpha(\widetilde{Y}))) & \approx 1.188 \\
	\mathrm{d}_{\mathrm{bottle}}(\rmH^2(\Alpha(\widetilde{X})), \rmH^2(\Alpha(\widetilde{Y}))) & \approx 0.985 \ .
	\end{aligned} 
\] 
One also compute that, for all $1.187\leqslant \eps \leqslant 23.768$, we have 
\[
	\rmH(\Alpha(\widetilde(X))_\eps) \cong \rmH(S^3\setminus B_3) \ ,
\]
and for all $1.086\leqslant \eps \leqslant 23.889$, we have 
\[
	\rmH(\Alpha(\widetilde(Y))_\eps) \cong \rmH(S^3\setminus U_3) \ .
\]
Now, similarly to~\Cref{ex:torus_sphere_comput} and~\Cref{remark:computation_multiplicative_distance}, we can  
give a rather precise estimate of the $\calA_\infty$-distance. Indeed recall that $\rmH^*(S^3 \setminus U_3)$ and $\rmH^*(S^3 \setminus B_3)$ are not isomorphic as $\calA_\infty$-algebras  since the $m_3$ products (in the $\calA_\infty$-structure between the classes corresponding to the three significative bars of degree $1$ is zero in the case corresponding to the trivial entanglement but is non-zero for the Borromean one and the same holds for their discretisation. 

Letting  $\delta =\min_{i}(t^i_1) -\max_{i}(t^i_0)$ be the difference  between the higher starting point of the dimension $1$ and $2$ long bars of $\rmH^*(\Alpha(\tilde{X}))$ and $\rmH^*(\Alpha(\tilde{Y}))$ and  the smaller endpoint corresponding to those long bars in dimension $1$ and $2$. Then there could be  no algebra $\frac{\delta}{2}$-interleaving in between $\rmH^*(\Alpha(\tilde{X}))$ and $\rmH^*(\Alpha(\tilde{Y}))$ because we would have a commutative diagram
\begin{equation}\label{eq:diagBorrocomp}
	\begin{tikzcd}
		& \rmH^1(\Alpha(\tilde{X}))_{\max_{i}(t_0^i)}^{\otimes 3} \arrow[r,"m_3"]& \rmH^2(\Alpha(\tilde{Y}))_{\min_{i}(t_1^i)} \arrow[rd, "\nu"]&\\
		\rmH^1(\Alpha(\tilde{Y}))_{\max_{i}(t_0^i)}^{\otimes 3}  \arrow[rr,"\cong"] \arrow[ru,"\mu"] && \rmH^1(\Alpha(\tilde{Y}))_{\min_{i}(t_1^i)}^{\otimes 3} \arrow[r, "m_3"] & \rmH^2(\Alpha(\tilde{Y}))_{\max_{i}(t_0^i)}
	\end{tikzcd} \ ,
\end{equation}
 for which the lower line will be non-zero while the top compositon is zero.

On the other hand since the smallest difference  in between the smallest ending point of the three significative degree 1 bars of $\rmH^1(\Alpha(\tilde{X}))$ and the highest starting points of those bars is  $\delta':=23.219$,  we obtain a trivial $\calA_\infty$ $\frac{\delta'}{2}$-interleaving  (since $\rmH^1(\Alpha(\tilde{X}))_{t+\delta'}$ has a trivial $\calA_\infty$-structure).  Therefore we have 
\[
	\frac{23.768-1.187}{2} \approx 11.29 \leqslant \mathrm{d}_{\calA_\infty}(\rmH^*(\calA(\widetilde{X})), \rmH^*(\calA(\widetilde{Y})))  \leqslant \frac{23.219}{2} \approx 11.609 \ .
\]

\begin{figure}[h!]
	\begin{minipage}[c]{.46\linewidth}
	\includegraphics[scale=0.45]{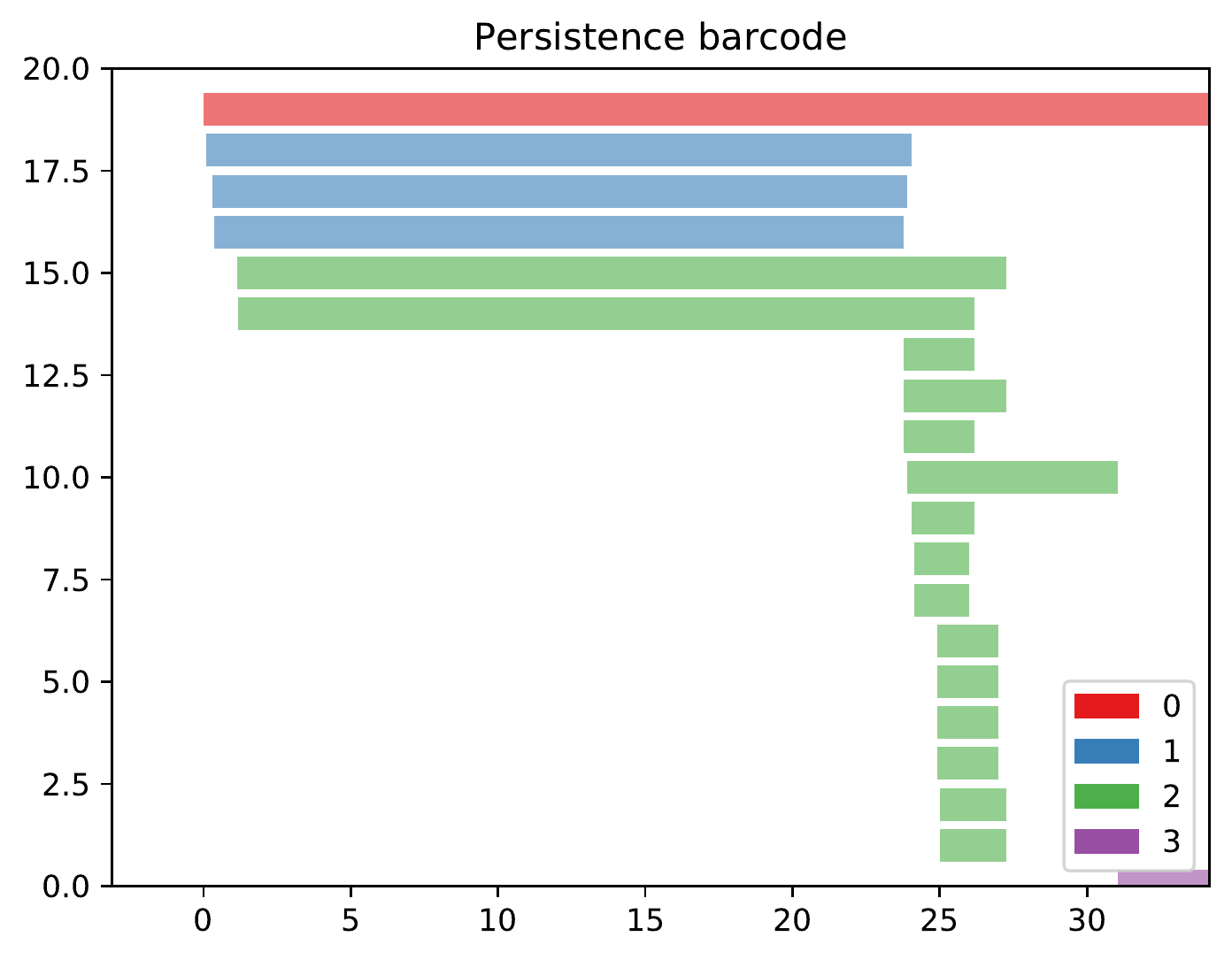}
	\caption{Persistent barcode of $\Alpha(\widetilde{X})$}
	\label{fig:barcode_B3}
\end{minipage}
\begin{minipage}[c]{.46\linewidth}
	\includegraphics[scale=0.45]{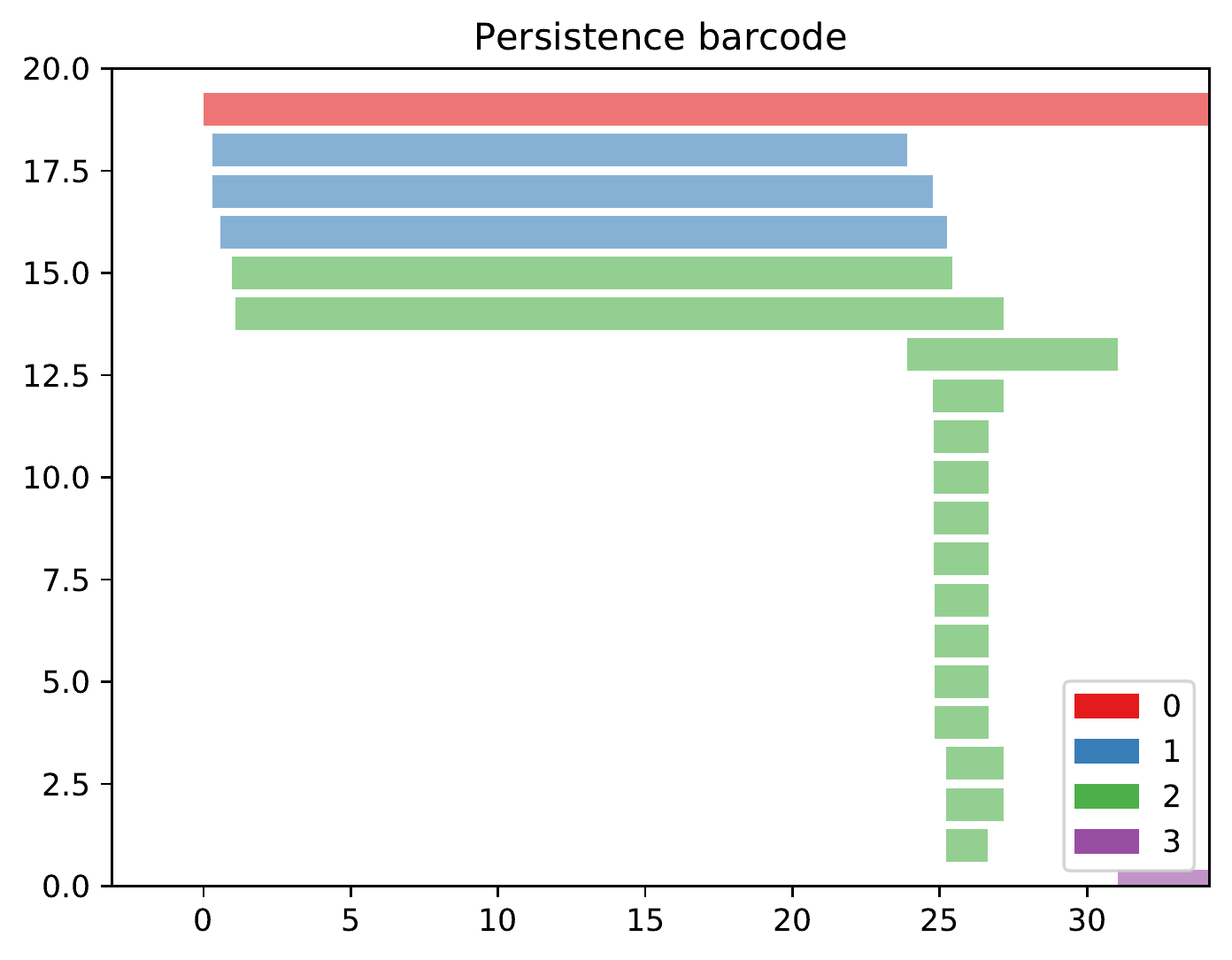}
	\caption{Persistent barcode of $\Alpha(\widetilde{Y})$}
	\label{fig:barcode_U3} 
\end{minipage}
\end{figure}

\begin{figure}[h!]
	\begin{minipage}[c]{.46\linewidth}
	\includegraphics[scale=0.45]{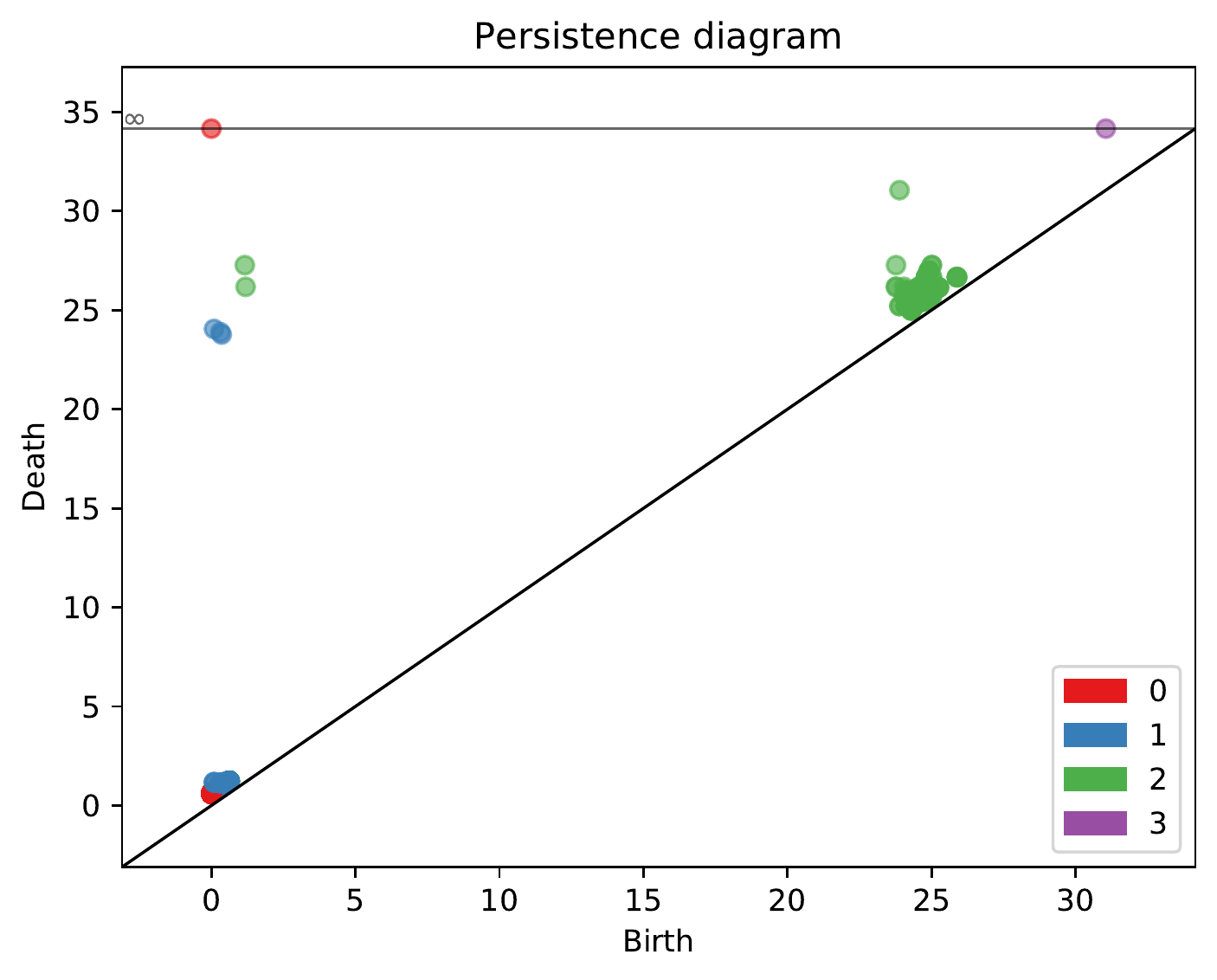}
	\caption{Persistent diagram of $\Alpha(\widetilde{X})$}
	\label{fig:persistent_B3}
\end{minipage}
\begin{minipage}[c]{.46\linewidth}
	\includegraphics[scale=0.45]{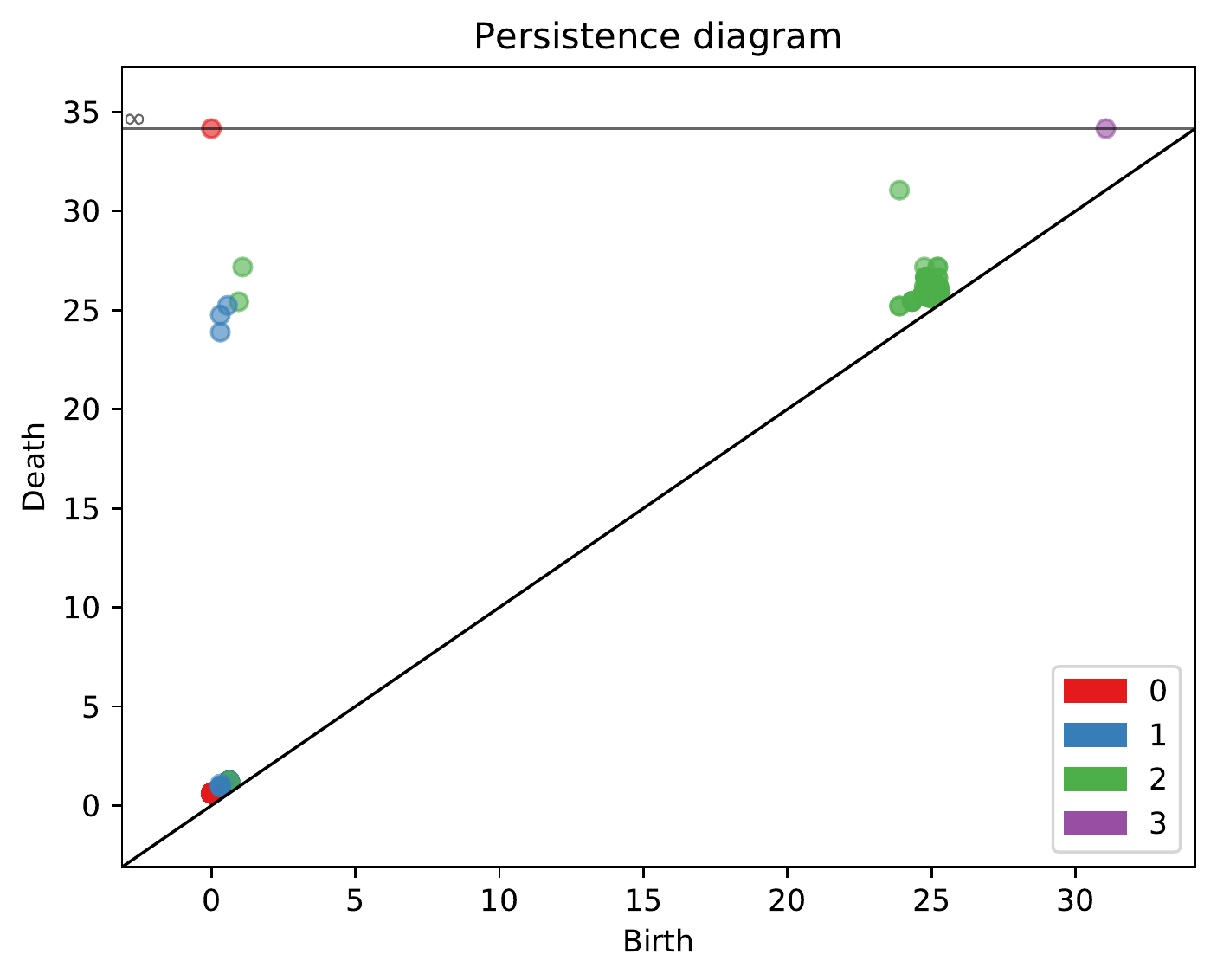}
	\caption{Persistent diagram of $\Alpha(\widetilde{Y})$}
	\label{fig:persistent_U3} 
\end{minipage}
\end{figure}
\end{example}
\begin{remark}[About computation of the  $\calA_\infty$-distance] \label{remark:computation_Ainfty_distance} 
One can give lower bounds of the $\calA_\infty$-distance in the same way 
as for the graded algebra ones in~\Cref{remark:computation_multiplicative_distance}.

Indeed, 
let $A^*,\, B^*$ be pointwise finite dimensional copersistent  $\calA_\infty$-algebras over the field $k=\mathbb{F}_2$.  Then every bar in the barcode correspond to (co)homology classes, and given $n$-many  of them, say $a_1,\dots, a_n$, we can compute their $m_n$-$\calA_\infty$-product  which is a sum $\sum_{k\in I} c_k \in A^{i_1+\dots i_n +2-n}$ where the $c_k$ are cohomology classes associated to bars of the graded barcode of $A^*$. Therefore, if one has an $\eps$-$\calA_\infty$-interleaving such that  the $a_i$'s are $\eps$-matched to bars $b_i\in B^*$, then, denoting $m_n(b_1, \dots,,b_n)= \sum_{\ell\in J} d_j$,  from diagrams similar to~\eqref{eq:diagBorrocomp}, we get that   
the bars $(c_k)_{k\in I}$ are $\eps$-interleaved (with the cohomology classes representing the) bars $(d_\ell)_{\ell\in J}$ as well as $\eps$-matched with them.

Thus, in order to give an effective lower bound on the $\calA_\infty$-distance, on can look for  $\eps$-matching between the positive degrees  bars of $A^*$ and $B^*$ such that for any matched pairs $(a_1, b_1)$,..., $(a_n, b_n)$, those  $\eps$-matching restricts to $\eps$-matching in between  the corresponding non zero bars $(c_k)$ and $(d_k)$ appearing in  $m_n(a_1, \dots, a_n)= \sum c_k$, and $m_n(b_1,\dots, b_n)= \sum d_k$.  Then the lower such $\eps$ is a lower bound for the $\calA_\infty$-distance. By finiteness, this is only a finite number of steps. 
\end{remark}

\section{A  stability theorem for multiplicative distances}

Consider $X$ and $Y$, two compact topological spaces. Assume $X$ and $Y$ are almost the same,  i.e. the Gromov-Hausdorff distance between $X$ and $Y$ is small (for example, if $Y$ is a small perturbation of $X$), then, we want  the interleaving distances between $\rmC^*(X)$ and $\rmC^*(Y)$ or $\rmH^*(X)$ and $\rmH^*(Y)$ (defined in \Cref{subsect:distance_As} and \Cref{subsec:distance_Ainfty}) to be small as well. We obtain this result (see \Cref{thm:stabilite}) by adapting the proof of its classical version (cf. \cite{chazal2014stability}). Note that in \cite{bubenik2017interleaving}, Bubenik \emph{et al.} have given a general framework to show stability theorems.

\subsection{Gromov-Hausdorff distance}
We first review the basic definitions of Gromov-Hausdorff distance. 

\begin{definition}[Multivalued map -- Correspondence]
	Let $X$ and $Y$ be two sets. A \emph{multivalued map} from $X$ to $Y$ is a subset $C$ of $X\times Y$ such that the canonical projection restrict to $C$ $\pi_X|_C:C \rightarrow X$ is surjective.  We denote a multivalued map $C$ from $X$ to $Y$ by $C:X \rightrightarrows Y$. 	
	The \emph{image} $C(\sigma)$ of a subset $\sigma$ of $X$ is the canonical  projection onto $Y$ of the preimage of $\sigma$ through $\pi_X$.
	A map $f:X\rightarrow Y$ is \emph{surbordinate} to $C$ if, for all $x$ in $X$, the pair $(x,f(x))$ is in $C$. In that case, we write $f:X \overset{C}{\rightarrow} Y$.	
	The \emph{composition} of two multivalued maps $C:X \rightrightarrows Y$ and $D:Y\rightrightarrows Z$ is the multivalued map $D\circ C :X\rightrightarrows Z$, defined by:
	\[
		(x,y) \in D\circ C \Longleftrightarrow \mbox{ there exists } y\in Y  \mbox{ such that } (x,y)\in C  \mbox{ and } (y,z)\in D.
	\]
	A multivalued map $C:X\rightrightarrows Y$ such that the canonical projection $C$ $\pi_Y|_C:C \rightarrow Y$ is surjective, is called \emph{a correspondence}. The \emph{transpose} of a correspondence $C$, denoted $C^T$, is the correspondence defined by the image of $C$ through the symmetry $(x,y)\mapsto (y,x)$.	
\end{definition}

\begin{remark}\label{rem:idetite_subordonnee}
	Consider a correspondence $C:X\rightrightarrows Y$. Then we have
	\[
		\begin{tikzcd}
				\id_X : X \arrow[r,"C^T\circ C"] &  X
		\end{tikzcd}
		\quad \mbox{and} \quad
		\begin{tikzcd}
			\id_Y : Y \arrow[r,"C\circ C^T"] &  Y
		\end{tikzcd} \ .
	\]
\end{remark}
To a correspondence $C:X \rightrightarrows Y$, we associate a quantity called the distortion metric, and we define the Gromov Hausdorff distance.

\begin{definition}[Distortion of a correspondence -- Gromov-Hausdorff distance]\label{def::metric_distortion}
	Let $(X,\d_X)$ and $(Y,\d_Y)$ be two metric spaces. The \emph{distortion} of a correspondence $C:X\rightrightarrows Y$ is defined as follows:
	\[
		\dist_m(C)\coloneqq \underset{(x,y),(x',y')\in C}{\sup} \ |\d_X(x,x')-\d_Y(y,y')|.
	\]
	The \emph{Gromov-Hausdorff  distance} between the metric spaces $X$ and $Y$ is defined as follows:
	\[
		\dGH(X,Y)\coloneqq \dfrac{1}{2}\underset{C:X\rightrightarrows Y}{\inf} \ \dist_m(C).
	\]
\end{definition}

\begin{remark}
	Let $X$ and $Y$ be two metric spaces. The Gromov-Hausdorff distance between $X$ and $Y$ is equal to the following one:
	\[
		\dGH(X,Y)= \inf_{(\gamma,\eta)\in \Gamma} \min\{\eps\geqslant 0 \ | \ \gamma(X)\subset \eta(Y)_\eps \ \mbox{and} \ \eta(X)\subset \gamma(X)_\eps \}
	\]
	where $\Gamma=\{ X \overset{\gamma}{\rightarrow} Z \overset{\eta}{\leftarrow} Y\ | \  \mbox{$(Z,d_Z)$ metric space and $\gamma$ and $\eta$ are isometrical embeddings} \}$, and 
	\[
	(\gamma(X))_\eps \coloneqq \bigcup_{x \in  \gamma(X)} B_Z(x,\eps)	.
	\]
\end{remark}

\subsection{Simplicial multivalued map} 

\begin{definition}[$\eps$-simplicial multivalued map]
	Let $\calS$ and $\calT$ be two persistent  delta complexes such that, for all $r$ in $\sfR$, the vertex sets of $\calS_r$ and $\calT$ are $X$ and $Y$ respectively. A multivalued map $C:X\rightrightarrows Y$ is \emph{$\eps$-simplicial for $\calS$ and $\calT$}  if, for any $r$ in $\sfR$ and any simplex $\sigma$ in $\calS_r$, every finite subset of $C(\sigma)$ is a simplex of $\calT_{r+\eps}$.
\end{definition}

\begin{definition}[Contiguous maps]
	Let $K$ and $L$ be two simplicial complexes. Two simplicial maps $f,g :K\rightarrow L$ are \emph{contiguous} if, for each simplex $v_0,\ldots,v_n$ of $K$, the points 
	\[
	f(v_0),\ldots,f(v_n),g(v_0),\ldots,g(v_n)
	\]
	span a simplex $\tau$ of $L$.
\end{definition}

\begin{lemma}[{\cite[Proposition 3.3]{chazal2014stability}}] \label{lemma:subordinate_are_homotopic}
	Let $\calS,\calT:\sfR \rightarrow \Delta\Cpx$ with vertex sets $X$ and $Y$ respectively and let $C:X\rightrightarrows Y$ be a $\eps$-simplicial multivalued map from $\calS$ to $\calT$. Then, any two subordinate maps $f_1,f_2:X \overset{C}{\rightarrow} Y$ induce simplicial maps $\calS_a\rightarrow \calT_{a+\eps}$ which are contiguous.
	Also, the maps $|f_1|$ and $|f_2|$ are homotopic.
\end{lemma}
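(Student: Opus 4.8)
The plan is to unwind the definitions, which immediately yield both the simpliciality and the contiguity, and then to invoke the classical fact that contiguous simplicial maps realize to homotopic maps. Throughout I treat $\calS_a$ and $\calT_{a+\eps}$ as simplicial complexes, as is legitimate for the \v{C}ech and Rips complexes of interest.

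First I would verify that a subordinate map $f \colon X \overset{C}{\rightarrow} Y$ descends to a simplicial map $\calS_a \to \calT_{a+\eps}$. Fix a simplex $\sigma = [v_0, \ldots, v_n]$ of $\calS_a$. Subordination gives $(v_i, f(v_i)) \in C$, hence $f(v_i) \in C(v_i) \subseteq C(\sigma)$ for every $i$, so $\{f(v_0), \ldots, f(v_n)\}$ is a finite subset of $C(\sigma)$ and, by the $\eps$-simpliciality hypothesis, a simplex of $\calT_{a+\eps}$. Thus $f$ sends simplices to simplices and induces a simplicial map; the same holds for $f_1$ and $f_2$. The contiguity is then produced by exactly the same mechanism: for the simplex $\sigma$ above, both $\{f_1(v_i)\}_i$ and $\{f_2(v_i)\}_i$ lie in $C(\sigma)$, so the union $\{f_1(v_0), \ldots, f_1(v_n), f_2(v_0), \ldots, f_2(v_n)\}$ is again a finite subset of $C(\sigma)$ and therefore spans a simplex $\tau$ of $\calT_{a+\eps}$. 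This is precisely the defining condition for the two induced maps to be contiguous.

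Finally I would upgrade contiguity to a homotopy $|f_1| \simeq |f_2|$ by straight-line interpolation inside the common simplex $\tau$. For a point $x = \sum_i t_i v_i$ of $|\sigma|$ and $s \in [0,1]$, I would set $H(x, s) = \sum_i t_i\bigl((1-s)\,|f_1|(v_i) + s\,|f_2|(v_i)\bigr)$; since all the vertices $f_1(v_i), f_2(v_i)$ belong to $\tau$, this affine combination lands in $|\tau| \subseteq |\calT_{a+\eps}|$, and $H$ interpolates between $|f_1|$ at $s=0$ and $|f_2|$ at $s=1$. The only point requiring care is well-definedness, namely that a point lying in several faces of $|\calS_a|$ receives a single value: this holds because $f_1, f_2$ are simplicial and hence compatible with face inclusions, so the barycentric formula glues to a continuous map $H \colon |\calS_a| \times [0,1] \to |\calT_{a+\eps}|$. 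This gluing is the only mildly technical ingredient, and it is entirely standard (the contiguity-implies-homotopy lemma); everything else is a direct reading of the hypotheses.
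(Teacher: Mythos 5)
Your proposal is correct and follows essentially the same route as the paper: both arguments deduce simpliciality and contiguity directly from the definition of an $\eps$-simplicial multivalued map (the images of the vertices under both subordinate maps lie in $C(\sigma)$, hence span a simplex of $\calT_{a+\eps}$), and then pass from contiguity to homotopy of realizations. The only difference is that the paper simply cites the standard result that contiguous simplicial maps have homotopic realizations, whereas you write out the straight-line homotopy explicitly, which is fine.
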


\begin{proof}
	Let $\calS,\calT:\sfR \rightarrow \Delta\Cpx$ with vertex sets $X$ and $Y$ respectively and let $C:X\rightrightarrows Y$ be a $\eps$-simplicial multivalued map from $\calS$ to $\calT$. Any subordinate map $f:X \overset{C}{\rightarrow} Y$ induces a simplicial map $\calS_r \to \calT_{r+\eps}$ for each $r$ in $\sfR$ by definition of an $\eps$-simplicial multivalued map.
	
	Consider two subordinate maps $f_1,f_2:X \overset{C}{\rightarrow} Y$, and let $\sigma=[v_0,\ldots,v_n]$ be a simplex in $\calS_r$. As $C$ is a $\eps$-simplicial multivalued map, then, by definition, every subset of $C(\sigma)$ is a simplicial set of $\calT_{r+\eps}$. Therefore, since every $f_i(v_j)$ is in $C(\sigma)$,  the set  $f_1(v_0),\ldots,f_1(v_n),f_2(v_0),\ldots,f_2(v_n)$ is a simplex of $\calT_{r+\eps}$. Consequently, the simplical maps induced by $f_1$ and $f_2$ are contiguous. By \cite[Proposition 10.20]{mccleary06}, contiguous simplicial maps have homotopic realisations; hence, the realisations of $f_1$ and $f_2$ are homotopic.
\end{proof}

\begin{proposition}[{\cite[Chapter 16, Theorem 3.8.]{james1995handbook}}]\label{prop:homotopy_of_dga} 
	The functor $\rmC^*:\Top^\op\rightarrow \AsAlg$ converts weak homotopy equivalences to quasi-isomorphisms and homotopy classes of maps to homotopy classes of morphisms of dg-associative algebras.
\end{proposition}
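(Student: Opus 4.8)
The plan is to prove the two assertions separately: first the chain-level (linear) statement, and then upgrade it to the algebra level by realizing a topological homotopy as a path object in dg-algebras.

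For the first assertion I would start from the singular chain functor $\rmC_*^{\mathrm{Sing}}$. A weak homotopy equivalence $f\colon X\to Y$ induces an isomorphism on singular homology: for honest homotopy \emph{equivalences} this is the elementary homotopy invariance of singular homology (the prism/chain-homotopy operator), and for general weak equivalences one reduces to this case via CW approximation and Whitehead's theorem. Hence $\rmC_*^{\mathrm{Sing}}(f)$ is a quasi-isomorphism of chain complexes of $k$-vector spaces. Since $k$ is a field, $\Hom_k(-,k)$ is exact, so it preserves quasi-isomorphisms (by the universal coefficient theorem there is no $\mathrm{Ext}$ term). Therefore $\rmC^*_{\As}(f)=\Hom_k(\rmC_*^{\mathrm{Sing}}(f),k)$ is a quasi-isomorphism of cochain complexes, and as it is simultaneously a morphism of dg-algebras by \Cref{def:cupproduct}, it is a quasi-isomorphism in $\AsAlg$. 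This settles the first half.

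For the second assertion, let $f\simeq g\colon X\to Y$ and fix a homotopy $H\colon X\times[0,1]\to Y$ with $H\circ i_0=f$ and $H\circ i_1=g$, where $i_0,i_1\colon X\hookrightarrow X\times[0,1]$ are the two end inclusions and $\pi\colon X\times[0,1]\to X$ the projection. Applying $\rmC^*_{\As}$ gives dg-algebra maps
\[
	\rmC^*_{\As}(X)\xrightarrow{\ \rmC^*_{\As}(\pi)\ }\rmC^*_{\As}(X\times[0,1])\xrightarrow{\ (\rmC^*_{\As}(i_0),\,\rmC^*_{\As}(i_1))\ }\rmC^*_{\As}(X)\times\rmC^*_{\As}(X).
\]
Because $\pi\circ i_0=\pi\circ i_1=\id_X$, contravariant functoriality yields $\rmC^*_{\As}(i_0)\circ\rmC^*_{\As}(\pi)=\rmC^*_{\As}(i_1)\circ\rmC^*_{\As}(\pi)=\id$. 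Since $\pi$ is a homotopy equivalence, $\rmC^*_{\As}(\pi)$ is a quasi-isomorphism by the first half, hence invertible in $\ho(\AsAlg)$; an invertible morphism has a unique left inverse, so $\rmC^*_{\As}(i_0)=\rmC^*_{\As}(i_1)$ in $\ho(\AsAlg)$. Consequently $\rmC^*_{\As}(f)=\rmC^*_{\As}(i_0)\circ\rmC^*_{\As}(H)$ and $\rmC^*_{\As}(g)=\rmC^*_{\As}(i_1)\circ\rmC^*_{\As}(H)$ agree in $\ho(\AsAlg)$, which is exactly the claim that homotopic maps go to the same homotopy class of dg-algebra morphisms. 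One may further note that this exhibits $\rmC^*_{\As}(X\times[0,1])$ as a genuine path object for $\rmC^*_{\As}(X)$: the map $(\rmC^*_{\As}(i_0),\rmC^*_{\As}(i_1))$ is $\rmC^*_{\As}$ of the subspace inclusion $X\times\{0,1\}\hookrightarrow X\times[0,1]$, and restriction of cochains along a subspace inclusion is surjective (the singular chains of the subspace split off), hence this map is a fibration, and $\rmC^*_{\As}(H)$ is a right homotopy.

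The routine inputs here are all classical, and I would only cite them: homotopy invariance of singular (co)homology and the CW-approximation reduction for weak equivalences. The algebra-level upgrade is then essentially formal, the key observation being that $\pi,i_0,i_1$ are honest continuous maps, so $\rmC^*_{\As}$ of them are automatically dg-algebra maps and the identities $\pi i_t=\id$ survive. The one point that demands care, and which I would flag as the main subtlety, is matching the bare localization argument above with the intended notion of \emph{homotopy of dg-algebra morphisms}; working through the path object explicitly, rather than invoking an abstract model-category comparison, is the safest way to make that identification airtight.
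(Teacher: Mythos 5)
The paper offers no proof of this proposition: it is stated as a citation to the literature (James's Handbook, Chapter 16, Theorem 3.8), so there is nothing internal to compare your argument against. That said, your proof is correct and is essentially the standard argument one would find in such a reference. The first half (homotopy invariance of singular homology, reduction of weak equivalences via CW approximation and Whitehead, then dualization over the field $k$, which is exact, so quasi-isomorphisms are preserved and the resulting map is a quasi-isomorphism of dg-algebras by naturality of the cup product) is fine. For the second half, your localization argument correctly shows $\rmC^*_{\As}(f)=\rmC^*_{\As}(g)$ in $\ho(\AsAlg)$, and you rightly identify the one real subtlety: equality in the localized category is a priori weaker than the homotopy relation on dg-algebra morphisms when the source is not cofibrant. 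Your explicit path-object construction closes that gap: $\rmC^*_{\As}(\pi)$ is a quasi-isomorphism, the restriction $\rmC^*_{\As}(X\times[0,1])\to\rmC^*_{\As}(X)\times\rmC^*_{\As}(X)$ is surjective (singular simplices of a subspace inject into those of the ambient space, so the dual splits) and hence a fibration, and $\rmC^*_{\As}(H)$ is then a genuine right homotopy from $\rmC^*_{\As}(f)$ to $\rmC^*_{\As}(g)$. This is exactly the kind of argument the cited source runs, so nothing is missing.
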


\subsection{Case of \v{C}ech and Vietoris-Rips complexes}
In this section, we prove the stability of the distances that we introduced before for the  \v{C}ech and Vietoris-Rips complexes.

\begin{lemma}[{\cite[Lemmas 4.3 and 4.4]{chazal2014stability}}]\label{lem:C_eps_simplicial}
	Let $(X,\d_X)$ and $(Y,\d_Y)$ be two metric spaces, and let $C:X \rightrightarrows Y$ be a correspondence with distortion at most $\eps$. Then
	\begin{enumerate}
		\item the correspondence $C$ is $\eps$-simplicial from $\calR(X,\d_X)$ to $\calR(Y,\d_Y)$\ ;
		\item the correspondence $C$ is $\eps$-simplicial from $\Cech(X,\d_X)$ to $\Cech(Y,\d_Y)$\ .
	\end{enumerate}	
\end{lemma}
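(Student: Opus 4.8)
The plan is to prove both claims directly from the definitions, the only input being the hypothesis that $\dist_m(C) \leqslant \eps$, which unwinds to the statement that $|\d_X(x,x') - \d_Y(y,y')| \leqslant \eps$ for all pairs $(x,y),(x',y')$ in $C$. First I would fix $r$ in $\sfR$, a simplex $\sigma$ at parameter $r$, and a finite subset $\{y_0,\ldots,y_m\} \subseteq C(\sigma)$; by definition of the image $C(\sigma)$, for each $\ell$ I may select a vertex $x_\ell$ of $\sigma$ with $(x_\ell, y_\ell) \in C$. The whole proof then reduces to checking that $\{y_0,\ldots,y_m\}$ is a simplex of the target complex at parameter $r+\eps$.

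For part (1), suppose $\sigma \in \calR_r(X,\d_X)$, so $\d_X(x,x') \leqslant r$ for all vertices of $\sigma$. Given $y_\ell, y_{\ell'}$ with chosen preimages $x_\ell, x_{\ell'}$, the distortion bound immediately yields $\d_Y(y_\ell, y_{\ell'}) \leqslant \d_X(x_\ell, x_{\ell'}) + \eps \leqslant r+\eps$; as this holds for every pair, the Rips condition at parameter $r+\eps$ is satisfied, and every finite subset of $C(\sigma)$ is a simplex of $\calR_{r+\eps}(Y,\d_Y)$.

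For part (2), the defining condition of the \v{C}ech complex concerns a common center rather than pairwise distances, and this is where the argument becomes less immediate. I would take an $r$-center $\bar{x} \in \bigcap_i \rmB(x_i, r)$ of $\sigma$, so that $\d_X(\bar{x}, x_i) \leqslant r$ for every vertex. Since $C$ is a correspondence (indeed only surjectivity of $\pi_X|_C$ is needed), there exists $\bar{y} \in Y$ with $(\bar{x}, \bar{y}) \in C$. Applying the distortion bound to the pairs $(\bar{x},\bar{y})$ and $(x_\ell, y_\ell)$ gives $\d_Y(\bar{y}, y_\ell) \leqslant \d_X(\bar{x}, x_\ell) + \eps \leqslant r+\eps$ for every $\ell$, so that $\bar{y} \in \bigcap_\ell \rmB(y_\ell, r+\eps)$ and the finite subset is a simplex of $\Cech_{r+\eps}(Y,\d_Y)$.

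The step I expect to be the main obstacle is precisely this \v{C}ech case: unlike the Rips complex, one cannot verify membership by controlling distances pair by pair, but must exhibit a single witnessing point of $Y$. The resolution is the observation that the center $\bar{x}$, though not a vertex of $\sigma$, still belongs to $X$ and hence admits a partner $\bar{y}$ under the correspondence; transporting the center through $C$ and bounding the displacement by the distortion is exactly what produces the required common center in $Y$.
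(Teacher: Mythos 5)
Your proof is correct and follows essentially the same argument as the paper's: the Rips case is handled by the pairwise distortion bound, and the \v{C}ech case by transporting an $r$-center $\bar{x}$ of $\sigma$ through the correspondence to obtain an $(r+\eps)$-center $\bar{y}$ of the image simplex. Your remark that only the surjectivity of $\pi_X|_C$ is needed to find a partner for $\bar{x}$ is a small but accurate refinement of what the paper states implicitly.
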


\begin{proof} Let $C:X \rightrightarrows Y$ be a correspondence with distortion at most $\eps$.
	\begin{enumerate}
		\item If $\sigma$ is a simplex of $\calR(X,\d_X)_r$, then $\d_X(x,x')\leqslant r$ for all $x,x'$ in $\sigma$. Let $\tau$ be any subset of $C(\sigma)$: for any $y,y'$ in $\tau$, there exist $x$ and $x'$ in $\sigma$ such that $y\in  C(x)$ and $y'\in C(x')$, and therefore:
		\[
			\d_Y(y,y') \leqslant \d_X(x,x') +\eps \leqslant r+\eps \ .
		\]
		Therefore $\tau$ is a simplex of $\calR(Y,\d_Y)_{r+\eps}$. We have thus shown that $C$ is $\eps$-simplicial from $\calR(X,\d_X)$ to $\calR(Y,\d_Y)$.
		
		\item Let $\sigma$ be a simplex of $\Cech(X,\d_X)_r$, and let $\bar{x}$ be an $r$-center of $\sigma$, so, for all $x$ in $\sigma$, we have  $\d_X(x,\bar{x})\leqslant r$. Take an element $\bar{y}$ in $C(\bar{x})$. For any $y$ in $C(\sigma)$, we have $y$ in $C(x)$ for some $x$ in $\sigma$, and therefore
		\[
			\d_Y(\bar{y},y) \leqslant \d_X(\bar{x},x) +\eps \leqslant r+\eps \ ,
		\]
		Let $\tau$ be a subset of $C(\sigma)$; $\bar{y}$ is an $(r+\eps)$-center for $\tau$ and hence $\tau$ is a simplex of $\calC(Y,\d_Y)_{r+\eps}$. We have thus shown that $C$ is $\eps$-simplicial from $\Cech(X,\d_X)$ to $\Cech(Y,\d_Y)$.
	\end{enumerate}
\end{proof}

\begin{theorem}[Stability theorem - Associative version]\label{thm:stabilite}
	Let $(X,\d_X)$ and $(Y,\d_Y)$ be two metric spaces. We have the following inequalities:
	\begin{enumerate}
		\item in the homotopy category $\ho(\AsAlg)$:
			\begin{align*}
				\d_{\ho(\AsAlg)}(\rmC^*(\calR(X,\d_X)),\rmC^*(\calR(Y,\d_Y))) \leqslant 2\dGH\left((X,\d_X),(Y,\d_Y)\right) \ ; \\
				\d_{\ho(\AsAlg)}(\rmC^*(\Cech(X,\d_X)),\rmC^*(\Cech(Y,\d_Y))) \leqslant 2\dGH\left((X,\d_X),(Y,\d_Y)\right) \ ;
			\end{align*}
			
		\item in the strict category $\AsAlg$ :
		\begin{align*}
			\d_{\As}(\calR(X,\d_X),\calR(Y,\d_Y)) \leqslant 2\dGH\left((X,\d_X),(Y,\d_Y)\right) \ ; \\
			\d_{\As}(\Cech(X,\d_X),\Cech(Y,\d_Y)) \leqslant 2\dGH\left((X,\d_X),(Y,\d_Y)\right) \ .
		\end{align*}
	\end{enumerate}
\end{theorem}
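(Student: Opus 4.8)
The plan is to build one interleaving out of a correspondence, prove the two interleaving triangles commute in $\ho(\AsAlg)$ via contiguity, and then deduce part (2) by pushing forward along the cohomology functor. All four inequalities have the same shape, so I would run the argument for $\calR$ in $\ho(\AsAlg)$; the $\Cech$ statement is identical, replacing item (1) of \Cref{lem:C_eps_simplicial} by item (2).

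First I would unwind the Gromov--Hausdorff distance. Fix $\eta>0$ and choose a correspondence $C:X\rightrightarrows Y$ with $\delta:=\dist_m(C)\leqslant 2\dGH\left((X,\d_X),(Y,\d_Y)\right)+\eta$. By \Cref{lem:C_eps_simplicial} the correspondence $C$ is $\delta$-simplicial from $\calR(X,\d_X)$ to $\calR(Y,\d_Y)$, and since the transpose $C^T$ has the same distortion it is $\delta$-simplicial from $\calR(Y,\d_Y)$ to $\calR(X,\d_X)$. Choosing subordinate maps $f:X\overset{C}{\to}Y$ and $g:Y\overset{C^T}{\to}X$ produces, for every $r$, simplicial maps $\calR(X)_r\to\calR(Y)_{r+\delta}$ and $\calR(Y)_r\to\calR(X)_{r+\delta}$; being induced by fixed maps on vertices, they commute with the structural inclusions, hence assemble into $\delta$-morphisms of persistent simplicial sets. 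Applying the contravariant functor $\rmC^*$ (which indeed lands in $\AsAlg$, as it respects the cup product) and reindexing by $\delta$ turns them into $\delta$-morphisms $\mu:\rmC^*(\calR(X))\to\rmC^*(\calR(Y))[\delta]$ and $\nu:\rmC^*(\calR(Y))\to\rmC^*(\calR(X))[\delta]$ of copersistent dg-algebras; these are the candidate interleaving maps.

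The substantive step is the commutativity of the two interleaving triangles in $\ho(\AsAlg)$. Here I would invoke \Cref{rem:idetite_subordonnee}: both the composite $g\circ f$ and $\id_X$ are subordinate to $C^T\circ C$, whose distortion is at most $2\delta$, so by \Cref{lem:C_eps_simplicial} it is $2\delta$-simplicial from $\calR(X)$ to itself. \Cref{lemma:subordinate_are_homotopic} then shows that the maps $\calR(X)_r\to\calR(X)_{r+2\delta}$ induced by $g\circ f$ and by $\id_X$---the latter being precisely the $2\delta$-structural inclusion---are contiguous and hence have homotopic realizations. By \Cref{prop:homotopy_of_dga} the functor $\rmC^*$ carries homotopic maps to homotopic, thus equal in $\ho(\AsAlg)$, morphisms of dg-algebras. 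Unwinding the contravariance, this says exactly that $\nu[\delta]\circ\mu$ equals the structural morphism $\eta_{2\delta}$ of $\rmC^*(\calR(X))$ in $\ho(\AsAlg)$, and the symmetric triangle follows from $f\circ g$ and $C\circ C^T$. This exhibits a $\delta$-interleaving, whence $\d_{\ho(\AsAlg)}(\rmC^*(\calR(X)),\rmC^*(\calR(Y)))\leqslant\delta\leqslant 2\dGH+\eta$, and letting $\eta\to 0$ gives the first inequality; the $\Cech$ version is verbatim.

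Finally, part (2) is a formal consequence. The cohomology functor factors as $\rmH:\ho(\AsAlg)\to\AsAlg$, and $\rmH^*_{\As}=\rmH\circ\rmC^*$ is exactly the graded-algebra invariant defining $\d_{\As}$. Applying \Cref{lem::apply_functor} to $\rmH$ gives $\d_{\As}(\calR(X),\calR(Y))=\d_{\AsAlg}(\rmH^*(\calR(X)),\rmH^*(\calR(Y)))\leqslant\d_{\ho(\AsAlg)}(\rmC^*(\calR(X)),\rmC^*(\calR(Y)))\leqslant 2\dGH$, and likewise for $\Cech$. I expect the only delicate point to be the shift bookkeeping forced by the contravariance of $\rmC^*$---making sure one really obtains a $\delta$-interleaving of \emph{copersistent} objects with the triangles correctly indexed; the homotopy-commutativity via contiguity is where the factor $2$ and the composed correspondence genuinely enter, and everything else is formal.
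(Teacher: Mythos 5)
Your proposal is correct and follows essentially the same route as the paper: a correspondence of small distortion is $\eps$-simplicial (\Cref{lem:C_eps_simplicial}), subordinate maps give contiguous hence homotopic simplicial maps (\Cref{lemma:subordinate_are_homotopic}), \Cref{prop:homotopy_of_dga} turns these into well-defined morphisms in $\ho(\AsAlg)$ whose composites agree with the structural morphisms via $C^T\circ C$ and $C\circ C^T$, and part (2) follows by applying cohomology. Your explicit $\eta$-bookkeeping for the non-attained infimum and the explicit deduction of (2) from (1) via \Cref{lem::apply_functor} are slightly more careful than the paper's terser write-up, but the argument is the same.
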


Recall (\Cref{rem:notation_distance}) that in the inequalities (2), we are considering the associative algebra structures given by the cup product on the \emph{cohomology} algebras $\rmH^*(\calR(X,\d_X))$, $\rmH^*(\calR(Y,\d_Y))$ (and not algebra structures at the cochain level).

\begin{proof}
	Let $X$ and $Y$ be two metric spaces and let $C:X \rightrightarrows Y$ be a correspondence with distortion at most $\eps$. By \Cref{lem:C_eps_simplicial},  $C$ is $\eps$-simplicial from  $\calR(X,\d_X)$ to $\calR(Y,\d_Y)$. Thus, by \Cref{lemma:subordinate_are_homotopic},  any two subordinate maps $f,g: X \overset{C}{\to} Y$ induce $\eps$-morphisms of copersistent differential graded algebras $\rmC^*(\calR(X,\d_X)) \to \rmC^*(\calR(Y,\d_Y) )$ which are homotopic in the category $\AsAlg$. Therefor, the correspondence $C$ induces a $\eps$-morphism $\phi:\rmC^*(\calR(X,\d_X)) \to \rmC^*(\calR(Y,\d_Y) )$ in the homotopy category $\ho(\AsAlg)$ thanks to \Cref{prop:homotopy_of_dga}.

	By the same argument, the correspondence $C^T:Y \rightrightarrows X$ gives us an  $\eps$-morphism $\psi : \rmC^*(\calR(Y,\d_Y)) \to \rmC^*(\calR(X,\d_X) )$. The correspondence $C^T\circ C$ (respectively $C\circ C^T$) gives us the $2\eps$-morphism $\psi\circ\phi$ (resp. $\phi\circ\psi$), which is the canonical $2\eps$-endomorphism of $\rmC^*(\calR(X,\d_X))$ (resp. $\rmC^*(\calR(X,\d_X))$) given by the structural morphisms of $\rmC^*(\calR(X,\d_X))$ (resp. $\rmC^*(\calR(X,\d_X))$). Therefore the $\eps$-morphisms $\phi$ and $\psi$ define a $\eps$-interleaving between $\rmC^*(\calR(X,\d_X))$  and $\rmC^*(\calR(Y,\d_Y))$ . 
	
	The same argument applies \emph{verbatim} for all other cases.
\end{proof}

\begin{corollary}[Stability theorem - $\calA_\infty$ version]
	Let $X$ and $Y$ be two finite set of points of $\R^n$. We have the following inequalities:
	\begin{align*}
	\d_{\calA_{\infty}}(\calR(X),\calR(Y)) \leqslant 2\dGH\left(X,Y\right) \ ; \\
	\d_{\calA_{\infty}}(\Cech(X),\Cech(Y)) \leqslant 2\dGH\left(X,Y\right) \ .
	\end{align*}
\end{corollary}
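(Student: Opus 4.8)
The plan is to deduce this $\calA_\infty$-statement formally from the associative stability theorem already proved, by passing through the identification of the $\calA_\infty$-interleaving distance with an interleaving distance in the homotopy category of dg-algebras. No new geometric input is required: all of the metric work (building interleavings out of correspondences) has been done in \Cref{thm:stabilite}, and the bridge between the two distances is supplied by \Cref{prop:inequalities_Ainfty}.

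First I would record that the finiteness hypothesis is automatic. Since $X$ and $Y$ are finite sets of points of $\R^n$, \Cref{ex:finiteRips} shows that $\calR(X)$, $\calR(Y)$, $\Cech(X)$ and $\Cech(Y)$ are objects of $\mathsf{fData}^\sfR$, so the functor $\rmH_*\circ\rmT\rmC^*$ of \Cref{def:Ainftycochainalgebra} applies and $\d_{\calA_\infty}$ is indeed defined on them. The key step is then to invoke part (2) of \Cref{prop:inequalities_Ainfty}, which for such finite filtered data gives the equality
\[
	\d_{\calA_\infty}(\calR(X),\calR(Y)) = \d_{\ho(\AsAlg)}(\rmC^*(\calR(X)),\rmC^*(\calR(Y))) \ ,
\]
and the analogous equality for the \v{C}ech complexes. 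This is precisely where working with $\calA_\infty$-algebras pays off: it lets us replace the choice-dependent transferred structure by the more flexible dg-algebra homotopy category, in which an $\eps$-interleaving can be manufactured directly from a correspondence of small distortion rather than from a persistent transfer datum (which, by \Cref{R:notransfer}, need not exist).

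It then suffices to apply part (1) of \Cref{thm:stabilite} to the metric spaces $(X,\d_X)$ and $(Y,\d_Y)$ equipped with the Euclidean metric of $\R^n$, obtaining
\[
	\d_{\ho(\AsAlg)}(\rmC^*(\calR(X)),\rmC^*(\calR(Y))) \leqslant 2\dGH(X,Y) \ ,
\]
together with the corresponding bound for $\Cech$; chaining the two displays yields the claimed inequalities. Honestly there is no genuine obstacle left at this stage, since the substance has been isolated in the two cited results — the construction of the interleaving from subordinate maps via \Cref{lemma:subordinate_are_homotopic} on the one hand, and the Homotopy Transfer identification of the $\calA_\infty$- and dg-homotopy-category distances on the other. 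The only point that must be checked, and which I have placed first, is that these two statements may legitimately be composed, i.e.\ that the finite filtered data hypothesis demanded by $\d_{\calA_\infty}$ is met; once that is in place the corollary is a one-line consequence.
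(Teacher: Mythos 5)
Your argument is correct and is exactly the paper's own proof: the authors also deduce the corollary by combining part (2) of \Cref{prop:inequalities_Ainfty} with \Cref{thm:stabilite}, and your preliminary check that \Cref{ex:finiteRips} guarantees the finite-filtered-data hypothesis is a welcome (implicit in the paper) clarification.
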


\begin{proof}
    It follows from \Cref{prop:inequalities_Ainfty} (2) and \Cref{thm:stabilite}.
\end{proof}

\section{Homotopy commutativity preserving distances}

In this section we also consider the homotopy \emph{commutativity} of the cup-product in cohomology. The cochain algebra is not commutative on the nose though the cohomology is commutative (in the graded sense). Indeed, there are structures (encoded in higher homotopies) yielding the commutativity of the cup-product after passing to cohomology. Thus  commutativity is \emph{additional structures} on the cochains (or their cohomology) which can be used to distinguish homotopy types. We start by studying distance associated to the most homotopical additional structure in \Cref{sec:Einfty} before moving to more tractable but   useful ones in \Cref{sec:Steenrod_distance}. 

In this section, whenever needed, we adopt the operadic language (see \cite[Section 5.2]{LV2012} for the definition of an (algebraic symmetric) operad and the definition of an algebra over an operad). However, we try to make the statement and constructions understandable without  knowledge of operadic methods as much as we can.

\subsection{A theorical construction: $\calE_\infty$-structures and $\calE_\infty$-distances}\label{sec:Einfty}
In this section, we introduce a new distance, based on the $\calE_\infty$-algebra structure of  cochain complexes, which dominate all the other ones one can build on persistence cohomology associated to a space (or data set); see \Cref{rem:mandell}. 
The $\calE_\infty$-algebra structures are (differential graded) \emph{homotopy  commutative} and associative structures which are \emph{functorially} carried by cochain complexes associated to spaces or more generally simplicial sets or complexes. We first need the following standard construction.

\begin{definition}[Normalized (co)chain complex]\label{def:normalizedcomplex}
	Let $X$ be a simplicial set. The \emph{normalized chain complex} $\rmN_*(X)$ is the quotient of the dg-module $\rmC_*(X)$ by the degeneracies:
	\[
	\rmN_d(X)=\dfrac{\rmC_d(X)}{s_0\rmC_{d-1}(X)+\ldots s_{d-1}\rmC_{d-1}(X)}.
	\]
	We consider also the dual cochain complex $\rmN^*(X)=\Hom_{k}(\rmN_*(X),k)$.
	If $Y$ is a topological space, we define $\rmN_*(Y)\coloneqq  \rmN_*(\Sing_\bullet(Y))$ to be the normalized complex of the singular simplicial set associated to $Y$ (see \Cref{sec:CechRips}).
\end{definition}

\begin{remark}
	The normalized (co)chain complexes $\rmN^*$ and standard simplicial (co)chain complexes $\rmC^*$ functors are canonically quasi-isomorphic~\cite{weibelhomological}. In particular, for any topological space the canonical map 	$\rmN^*(Y) \to \rmC^*(Y)$ to the singular cochains of $Y$ is a natural quasi-isomorphism, i.e., induces an isomorphism in cohomology.
	Further, if $X$ is a simplicial complex, then the natural cochain complex associated to $X$ is isomorphic to $\rmN^*(X)$, the normalized complex of the simplicial sets associated to $X$ viewed as a simplicial sets, that is where we have added all degeneracies freely. Said in simpler terms, the normalized cochain complex precisely computes the cochains of a simplicial complex.
\end{remark}

The data of an $\calE_\infty$-algebra involves infinitely many homotopies and there are several equivalent models (meaning models which yields the same homotopy categories of $\calE_\infty$-algebras) for them. We refer to~\cite{Mandell2002, mandell2006cochains} for details on their homotopy theories. A nice  explicit and combinatorial model\footnote{Other popular models are given by the algebras over the Barrat-Eccles operad or the linear isometry operad} for $\calE_\infty$-algebras was given in  \cite{bergerfresse}. It is an explicit operad, called the \emph{surjection operad}, which we denote by $\calE_{\infty}$ in this paper,   which is a cofibrant resolution of the commutative operad $\Com$. The important point is that this operad encodes the structure of associative commutative product up to homotopy and in particular the  category of algebras over the surjection operads models $\calE_\infty$-algebras.

\begin{theorem}[see {\cite[Theorem 2.1.1.]{bergerfresse}}]\label{thm:bergerfresse}
	For any simplicial set $X$, we have evaluation products $\calE_\infty(r) \otimes \rmN^*(X)^{\otimes r} \to \rmN^*(X)$, functorial in $X$, which give the normalized cochain complex $\rmN^*(X)$ the structure of an $\calE_\infty$-algebra. In particular, the classical cup-product of cochains is an operation $\mu_0:\rmN^*(X)^{\otimes 2} \to \rmN^*(X)$  associated to an element $\mu_0$ in $\calE_\infty(2)_0$.
\end{theorem}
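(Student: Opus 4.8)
The plan is to exhibit the surjection operad explicitly and then write down its action on normalized cochains by an interval-cutting formula, reducing the statement to three compatibility checks. First I would recall the combinatorial model: for each $r\geqslant 1$, the chain complex $\calE_\infty(r)$ has, in degree $d$, a basis indexed by the \emph{nondegenerate surjections} $u\colon \{1,\ldots,r+d\}\to\{1,\ldots,r\}$, that is surjections with $u(i)\neq u(i+1)$ for all $i$, written as sequences $(u(1),\ldots,u(r+d))$. The differential is the alternating sum $\partial u=\sum_i \pm\, d_i u$, where $d_i$ deletes the $i$-th entry and one discards the terms that become degenerate; the symmetric group $\frakS_r$ acts by relabelling values; and operadic composition is given by substituting one surjection sequence into another. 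That this defines an $\calE_\infty$-operad---i.e.\ that each $\calE_\infty(r)$ is acyclic with $\H_0=k$ and carries a free $\frakS_r$-action, so that the augmentation $\calE_\infty\to\Com$ is a cofibrant resolution---is the standard acyclicity computation, for which I would cite Berger--Fresse directly.

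Next I would define the evaluation products. Given a surjection $u\in\calE_\infty(r)_d$ and cochains $a_1,\ldots,a_r$ with $\deg a_j=n_j$, the operation $u(a_1,\ldots,a_r)$ is a cochain of degree $\sum_j n_j-d$ whose value on an $n$-simplex $\sigma=[x_0,\ldots,x_n]$ (with $n=\sum_j n_j-d$) is obtained by summing, over all ways to cut the ordered vertex set into $r+d$ consecutive blocks $I_1,\ldots,I_{r+d}$ sharing one endpoint between consecutive blocks, the signed product $\prod_{j=1}^r a_j\bigl(\sigma|_{\bigcup_{u(k)=j} I_k}\bigr)$, where $\sigma|_F$ is the face spanned by the vertices in $F$ and the sign is the Koszul sign of the corresponding shuffle. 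This is the McClure--Smith / Berger--Fresse ``table reduction'' formula; its key feature is that only faces of $\sigma$ enter, which is exactly what will give functoriality for free.

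The theorem then reduces to three verifications. \textbf{(i)} The map $\calE_\infty(r)\otimes \rmN^*(X)^{\otimes r}\to\rmN^*(X)$ is a chain map: one must match the internal operad differential $\partial u$ with the simplicial coboundary applied to $u(a_1,\ldots,a_r)$, i.e.\ check the Leibniz identity $\delta\bigl(u(a_1,\ldots,a_r)\bigr)=\pm(\partial u)(a_1,\ldots,a_r)+\sum_j \pm\, u(a_1,\ldots,\delta a_j,\ldots,a_r)$. Here the terms of $\delta$ coming from faces of $\sigma$ interior to a block reproduce the $\delta a_j$ contributions, while faces collapsing a shared endpoint reproduce the deletions $d_i u$, and one checks that the signs agree and that the degenerate simplices produced cancel or vanish in $\rmN^*$. \textbf{(ii)} Equivariance and compatibility with operadic composition: relabelling the values of $u$ permutes the slots $a_j$ with the Koszul sign, and substituting surjections corresponds to iterating the cutting procedure, both of which follow by unwinding the formula. \textbf{(iii)} Functoriality in $X$: for a simplicial map $f\colon X\to Y$ the induced map $f^*\colon\rmN^*(Y)\to\rmN^*(X)$ commutes with all these operations, because the formula involves only restriction to faces and face maps are natural.

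Finally, the identification of the cup product is immediate from the formula applied to the identity surjection $\mu_0=(1,2)\in\calE_\infty(2)_0$: here $d=0$, there are only two blocks $I_1=[x_0,\ldots,x_p]$ and $I_2=[x_p,\ldots,x_n]$ with $p=\deg a_1$, and the formula collapses to $(a_1\cup a_2)(\sigma)=a_1(\sigma|_{[x_0,\ldots,x_p]})\cdot a_2(\sigma|_{[x_p,\ldots,x_n]})$, which is precisely the Alexander--Whitney cup product of \Cref{def:cupproduct}. The main obstacle is step \textbf{(i)}: the sign bookkeeping matching the operad differential to the simplicial coboundary, together with the verification that the nondegeneracy condition on surjections is exactly what makes the operations well defined on the normalized complex $\rmN^*$ rather than only on $\rmC^*$.
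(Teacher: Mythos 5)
The paper offers no proof of this statement---it is quoted directly from Berger--Fresse \cite[Theorem 2.1.1]{bergerfresse}---and your sketch is a faithful outline of the construction given there: the surjection operad with its deletion differential, the interval-cut (table reduction) action on normalized cochains, the Leibniz/equivariance/functoriality checks, and the identification of the degree-$0$ surjection $(1,2)$ with the Alexander--Whitney cup product. Your plan correctly isolates the real work (the sign bookkeeping in the chain-map verification and the acyclicity of $\calE_\infty(r)$, both of which you rightly defer to the reference), so there is nothing to compare against in the paper itself.
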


\begin{remark}\label{rem:coefficientforcochainsEinfty} The above theorem (and \Cref{def:normalizedcomplex}) are valid with any coefficient ring; in particular over $\mathbb{Z}$, $\mathbb{Q}$ or $\mathbb{F}_p$. 
\end{remark}

\begin{remark} \label{rem:mandell}
By \cite[Main Theorem]{mandell2006cochains}, we know that the $\calE_\infty$-structure on the cochain complex of a topological space $X$ (under some finiteness and nilpotence assumptions) is a faithfull invariant of the homotopy type of $X$ and essentially encodes it.
\end{remark}

\begin{remark}[see {\cite[Section 1.1.1.]{bergerfresse}}]\label{rem:forget_functor}
	As we have a factorisation of operad morphisms
	\[
	\begin{tikzcd} 
	\As \ar[r] &\calE_\infty \ar[r] &\Com \ ,
	\end{tikzcd}
	\]
	we have a forgetful functor 
	$
		\begin{tikzcd} 
		\Alg_{\calE_\infty} \ar[r,"\mathrm{forget}"]&
		\Alg_{\As}\ .
		\end{tikzcd}
	$ 
	Applied to the normalized cochain complex, this  functor recover the cup-product structure, that is, the usual differential graded algebra structure on cochain.
\end{remark}

\begin{definition}\label{def:NEinfty}
	 We denote $\rmN^*_{\calE_\infty}: \Top^{\op}\to \Alg_{\calE_\infty}$ the functor induced by \Cref{thm:bergerfresse} and call it the \emph{cochain $\calE_\infty$-algebra} functor. We denote in the same way its composition with the canonical functor 
	 $\Alg_{\calE_\infty}\to \ho(\Alg_{\calE_\infty}) $.
\end{definition}

As for dg-associative algebras in \Cref{sect:distance_As}, we will only consider the homotopy category of $\calE_\infty$-algebras. 

\begin{remark}\label{rem:forget_functor_Alg}
	The \Cref{rem:forget_functor} implies that the composition of functors 
	\[
		\begin{tikzcd} 
		 \Top^{\op}\ar[r,"\rmN^*_{\calE_\infty}"] &
		 \Alg_{\calE_\infty} \ar[r,"\mathrm{forget}"] & 
		 \Alg_{\As} 
		\end{tikzcd} 
	\] 
	is equal to $\rmN^*_{\As}$.
\end{remark}

The functoriality of the $\calE_\infty$-structure on the normalized cochain complex given by \Cref{thm:bergerfresse} justifies the following refined interleaving distance.

\begin{definition}[$\calE_\infty$-interleaving distance]
Let $X_\bullet,Y_\bullet:\sfR \to \Top$ be  two persistent spaces. The \emph{$\calE_\infty$ interleaving distance} is defined  by 
\[
	\d_{\calE_\infty}(X,Y) \coloneqq \d_{\ho(\Alg_{\calE_\infty })}(\rmN^*_{\calE_\infty}(X),\rmN^*_{\calE_\infty}(Y)) \ 
\]
where the right hand side is the interleaving distance in the \emph{homotopy category of $\calE_\infty$-algebras}.
\end{definition}  

\begin{remark}
The $\calE_\infty$-interleaving distance depends on the choice of the ground field $k$. In particular, as we will see, it behaves very differently in characteristic $0$ than in characteristic $p$. When we need to be explicit on the ground ring, we will use the notation 
 \[ 
 	\d_{\calE_\infty, k}(X,Y) \coloneqq \d_{\ho(\Alg_{\calE_\infty})} (\rmN^*_{\calE_\infty}(X,k),\rmN^*_{\calE_\infty}(Y,k))
 \]
for the  $\calE_\infty$-interleaving distance computed with coefficient in $k$.
\end{remark}

The $\calE_\infty$-interleaving distance is the more refined distance we can put on the persistence cochain complex of a space (or simplicial set). Indeed every other ones we consider are smaller, see~\Cref{thm:resume}. 

\begin{theorem}[Stability theorem - $\calE_\infty$ version] \label{thm:stability_Einfty_version}
	Let $(X,\d_X)$ and $(Y,\d_Y)$ be two metric spaces. We have the following inequalities:
	\[
	\begin{aligned}
	\d_{\calE_\infty }(\calR(X,\d_X),\calR(Y,\d_Y)) \leqslant 2\dGH\left((X,\d_X),(Y,\d_Y)\right)\ ; \\
	\d_{\calE_\infty }(\Cech(X,\d_X),\Cech(Y,\d_Y)) \leqslant 2\dGH\left((X,\d_X),(Y,\d_Y)\right) \ .
	\end{aligned}
	\]
\end{theorem}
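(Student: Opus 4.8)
The plan is to follow the proof of \Cref{thm:stabilite} \emph{mutatis mutandis}, replacing the dg-algebra cochain functor $\rmC^*$ by the functorial $\calE_\infty$-cochain functor $\rmN^*_{\calE_\infty}$ of \Cref{def:NEinfty}, whose functoriality on simplicial sets is guaranteed by \Cref{thm:bergerfresse}. Fix $\eps > 2\dGH(X,Y)$ and choose a correspondence $C : X \rightrightarrows Y$ of distortion at most $\eps$. By \Cref{lem:C_eps_simplicial}, $C$ is $\eps$-simplicial from $\calR(X,\d_X)$ to $\calR(Y,\d_Y)$ (resp. for the \v{C}ech complexes), so any subordinate map $f : X \overset{C}{\to} Y$ induces simplicial maps $\calR(X)_r \to \calR(Y)_{r+\eps}$ for all $r$; applying $\rmN^*_{\calE_\infty}$ yields an $\eps$-morphism of copersistent $\calE_\infty$-algebras $\rmN^*_{\calE_\infty}(\calR(X)) \to \rmN^*_{\calE_\infty}(\calR(Y))$.

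The one new ingredient needed is the $\calE_\infty$-analogue of \Cref{prop:homotopy_of_dga}: that $\rmN^*_{\calE_\infty}$ carries weak homotopy equivalences to quasi-isomorphisms of $\calE_\infty$-algebras (hence to isomorphisms in $\ho(\Alg_{\calE_\infty})$) and homotopic maps of spaces to a single well-defined morphism in $\ho(\Alg_{\calE_\infty})$. Granting this, \Cref{lemma:subordinate_are_homotopic} guarantees that any two subordinate maps $f_1, f_2 : X \overset{C}{\to} Y$ have homotopic geometric realizations, so they induce the \emph{same} $\eps$-morphism $\phi$ in $\ho(\Alg_{\calE_\infty})$; in particular $\phi$ depends only on $C$ and not on the chosen subordinate map. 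The homotopy invariance itself follows from the functoriality of the Berger--Fresse construction together with the homotopy theory of $\calE_\infty$-algebras developed in \cite{Mandell2002, mandell2006cochains, bergerfresse}: a contiguity (hence simplicial) homotopy induces a quasi-isomorphism compatible with the evaluation products, and passing to $\ho(\Alg_{\calE_\infty})$ identifies the two induced morphisms. Since $\rmN^*_{\calE_\infty}$ refines $\rmN^*_{\As}$ (\Cref{rem:forget_functor_Alg}), this is genuinely a lift of the statement already used for the associative distance.

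Finally, applying the same construction to the transpose correspondence $C^T : Y \rightrightarrows X$ produces an $\eps$-morphism $\psi$ in the reverse direction. By \Cref{rem:idetite_subordonnee} the composites $C^T \circ C$ and $C \circ C^T$ are subordinate to the respective identities, so the functoriality and the well-definedness above force $\psi \circ \phi$ and $\phi \circ \psi$ to be the canonical $2\eps$-structural endomorphisms; hence $\phi$ and $\psi$ define an $\eps$-interleaving in $\ho(\Alg_{\calE_\infty})$. Therefore $\d_{\calE_\infty}(\calR(X),\calR(Y)) \leqslant \eps$ for every $\eps > 2\dGH(X,Y)$, and taking the infimum yields the claimed inequality; the \v{C}ech case is identical using the second part of \Cref{lem:C_eps_simplicial}. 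The main obstacle is precisely the homotopy invariance of the $\calE_\infty$-cochain functor, i.e. the verification that $\rmN^*_{\calE_\infty}$ descends to $\ho(\Alg_{\calE_\infty})$ so that homotopic maps of spaces yield equal morphisms; once this is in place the interleaving bookkeeping is formally identical to that of \Cref{thm:stabilite}.
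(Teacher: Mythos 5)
Your proposal is correct and follows essentially the same route as the paper: the paper's proof is exactly "same as Theorem~\ref{thm:stabilite}, using the $\calE_\infty$-analogue (Proposition~\ref{prop:homotopy_Einfty}) of Proposition~\ref{prop:homotopy_of_dga}," with that proposition justified by the same references to Mandell and Berger--Fresse that you invoke. You correctly identify the homotopy invariance of $\rmN^*_{\calE_\infty}$ as the single new ingredient and carry out the interleaving bookkeeping exactly as in the associative case.
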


\begin{proof}
	It is the same proof that for \Cref{thm:stabilite} using the analogue (see \Cref{prop:homotopy_Einfty}) of \Cref{prop:homotopy_of_dga}.
	In positive characteristic the proposition follows from~\cite[Proposition 4.2 and 4.3]{Mandell2002} and this result is extended over $\mathbb{Z}$ (and therefore any coefficient ring) in~\cite[Section~1]{mandell2006cochains}; the case of $\rmN^*$ follows from that and the main construction of~\cite{bergerfresse}. Here notions of homotopy of algebras are with respect to the standard model structures, see~\cite{mandell2006cochains} for instance.
\end{proof} 

\begin{proposition}\label{prop:homotopy_Einfty}
	The functor  $\rmC^*:\Top^\op\rightarrow \Alg_{\calE_\infty}$ (respectively $\rmN^*_{\calE_\infty}: \sSet^\op  \rightarrow \Alg_{\calE_\infty}$) converts weak homotopy equivalences of spaces (resp. simplicial sets) to quasi-isomorphisms and homotopy classes of maps (resp. simplicial sets morphisms) to homotopy classes of morphisms of $\calE_\infty$-algebras.
\end{proposition}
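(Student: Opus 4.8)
The plan is to mirror the proof of \Cref{prop:homotopy_of_dga}, treating the two assertions (preservation of weak equivalences and compatibility with homotopies) separately, and to reduce each to the cited results of Mandell and Berger--Fresse. I would first handle the singular cochain functor $\rmC^*$ and then transfer everything to the normalized functor $\rmN^*_{\calE_\infty}$ through the natural quasi-isomorphism $\rmN^*(-) \to \rmC^*(-)$ recalled after \Cref{def:normalizedcomplex}.

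For the first assertion, a weak homotopy equivalence $f\colon X \to Y$ induces an isomorphism on singular cohomology with coefficients in $k$. Since quasi-isomorphisms of $\calE_\infty$-algebras are, by definition, the morphisms whose underlying map of cochain complexes is a quasi-isomorphism, and since the forgetful functor $\Alg_{\calE_\infty} \to \coCh_k$ sends $\rmC^*(f)$ to exactly this underlying map, it follows at once that $\rmC^*(f)$ is a quasi-isomorphism of $\calE_\infty$-algebras. This is the content of \cite[Proposition 4.2]{Mandell2002} over $\bbF_p$, extended to arbitrary coefficients in \cite[Section 1]{mandell2006cochains}.

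For the second assertion, the idea is to exhibit $\rmC^*(X \times I)$ as a path object for $\rmC^*(X)$ in the model category of $\calE_\infty$-algebras. Applying the contravariant functor $\rmC^*$ to the cylinder factorization $X \sqcup X \to X \times I \xrightarrow{p} X$ of the fold map yields a factorization
\[
	\rmC^*(X) \xrightarrow{\rmC^*(p)} \rmC^*(X \times I) \to \rmC^*(X) \times \rmC^*(X)
\]
in $\Alg_{\calE_\infty}$, whose composite is the diagonal, where $\rmC^*(X) \times \rmC^*(X) \cong \rmC^*(X \sqcup X)$ is the categorical product and $\rmC^*(p)$ is a quasi-isomorphism by the first assertion. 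A homotopy $H\colon X \times I \to Y$ from $f$ to $g$ then produces, through $\rmC^*(H)\colon \rmC^*(Y) \to \rmC^*(X \times I)$, a right homotopy between $\rmC^*(f)$ and $\rmC^*(g)$, so that the two morphisms agree in $\ho(\Alg_{\calE_\infty})$. This is the $\calE_\infty$-analogue of \cite[Proposition 4.3]{Mandell2002}, again valid over any coefficient ring by \cite[Section 1]{mandell2006cochains}. The normalized case is deduced by the same reasoning, replacing $X\times I$ by $X \times \Delta^1$ and using that $\rmN^*(-) \to \rmC^*(-)$ is a map of $\calE_\infty$-algebras compatible with the Berger--Fresse action of \Cref{thm:bergerfresse}, together with the Eilenberg--Zilber equivalence.

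The main obstacle in this argument is verifying that the displayed factorization genuinely defines a path object, i.e.\ that the comparison map to the product is, after the relevant fibrant replacement, compatible with the operadic $\calE_\infty$-structure so that right homotopy detects equality in the homotopy category. This compatibility between the K\"unneth/Eilenberg--Zilber comparison and the surjection-operad action is precisely what the constructions of \cite{bergerfresse} and the homotopical results of \cite{Mandell2002, mandell2006cochains} supply, so I would invoke them rather than reprove them.
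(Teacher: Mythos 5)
Your proposal is correct and follows essentially the same route as the paper, which simply invokes \cite[Propositions 4.2 and 4.3]{Mandell2002}, their extension to arbitrary coefficients in \cite[Section~1]{mandell2006cochains}, and the Berger--Fresse construction for the normalized case. You merely unpack the standard argument behind those citations (quasi-isomorphisms detected on underlying complexes; $\rmC^*(X\times I)$ as a path object), deferring the same operadic compatibilities to the same references, so there is no substantive divergence.
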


Despite being the most interesting distance from a purely theoretical point of view, the $\calE_\infty$-interleaving distance is not really easily computable  for the moment, therefore we now introduce coarsest ones, which are more computer friendly.

\subsection{Positive characteristic and the Steenrod interleaving distance}\label{sec:Steenrod_distance} In this subsection, we fix $k=\bbF_p$, where $p$ is a prime.

\begin{definition}[Steenrod algebra $\calA_p$ (see {\cite[Section 1.1]{baues2006algebra}})]\label{def:Steenrod}
	Let $p$ be a prime number. The \emph{mod $p$-Steenrod algebra}, denoted by $\calA_p$, is the graded commutative algebra over $\bbF_p$ which is
	
	\begin{itemize}
		\item for $p=2$, generated by elements denoted $\Sq^n$ and called the \emph{Steenrod squares}, for $n\geqslant 1$, with cohomological degree $n$;
		\item for $p>2$, generated by elements denoted $\beta$, called the \emph{Bockstein}, of degree $1$, and $\rmP^n$ for $n\geqslant 1$ of degree $2n(p-1)$;
	\end{itemize}

	whose product satisfy the following relations, called the \emph{\'Adem relations}:

	\begin{itemize}
		\item for $p=2$ and for $0<h<2k$, then
			\[
				\Sq^h\Sq^k = \sum_{i=0}^{[\frac{h}{2}]} \binom{k-i-1}{h-2i}\Sq^{h+k-i}\Sq^i \ ,
			\]
		\item for $p>2$ and for $0<h<pk$, then
			\[
				\rmP^h\rmP^k = \sum_{i=0}^{[\frac{h}{p}]} (-1)^{h+i} \binom{(p-1)(k-i)-1}{h-pi}\rmP^{h+k-i}\rmP^i \ ,
			\]
			and 
			\begin{align*}
				\rmP^h\beta\rmP^k =  
				& \sum_{i=0}^{[\frac{h}{p}]} (-1)^{h+i} \binom{(p-1)(k-i)}{h-pi}\beta\rmP^{h+k-i}\rmP^i 
				  \ + \sum_{i=0}^{[\frac{h-1}{p}]} (-1)^{h+i-1}  \binom{(p-1)(k-i)-1}{h-pi-1}\rmP^{h+k-i}\beta\rmP^i 
			\end{align*}
	\end{itemize}
	We denote by $\calA_p\mbox{-}\Alg$, the category of commutative algebras over $\calA_p$. 
\end{definition}

\begin{remark}\label{R:Steenrodascup}
	The Steenrod algebra  also have a structure of Hopf algebra. Further, an important formula is given, for $x$ in $\rmH^*(x)$, by 
	\[ 
		\Sq^{|x|}(x)= x\cup x \ .
	\]
\end{remark}

The Steenrod algebra is the algebra of cohomological operations, i.e. all the natural transformations of degree $d$ for all $d$ in $\bbN$ :
\[
	\rmH^*(-,\bbF_p) \longrightarrow \rmH^{*+d}(-,\bbF_p) \ ,
\]	
as illustrated by the following theorem.

\begin{theorem}[Steenrod, \'Adem]
	Let $X$ be a topological space. The singular cohomology with coefficient in $\bbF_p$ is a commutative algebra over the $\calA_p$ Steenrod algebra, so we have the functor
	\[
		\rmH_{\calA_p}^*(-,\bbF_p) : \Top^\op \longrightarrow \calA_p\mbox{-}\Alg.
	\]
\end{theorem}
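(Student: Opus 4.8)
The plan is to exhibit, for each space $X$, natural cohomology operations on $\rmH^*(X;\bbF_p)$ realizing the generators $\Sq^n$ (for $p=2$) or $\beta,\rmP^n$ (for $p$ odd) of \Cref{def:Steenrod}, to check that they satisfy precisely the Ádem relations together with the Cartan formula, and to deduce that the action of the free graded $\bbF_p$-algebra on these symbols factors through $\calA_p$ and is compatible with the cup product, functorially in $X$. First I would construct the operations at the cochain level by the equivariant extended-power method: the iterated diagonal $X\to X^{\times p}$ is $\Sigma_p$-equivariant, and on normalized cochains (\Cref{def:normalizedcomplex}) the $p$-fold cup product $\rmN^*(X)^{\otimes p}\to \rmN^*(X)$ is commutative only up to the coherent system of homotopies recorded by the $\calE_\infty$-structure of \Cref{thm:bergerfresse}. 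Restricting to the cyclic subgroup $\mathbb{Z}/p\subset\Sigma_p$ and using a free $\mathbb{Z}/p$-resolution of $\bbF_p$ against these homotopies — equivalently, using Steenrod's cup-$i$ products $\cup_i\colon \rmN^*(X)\otimes \rmN^*(X)\to \rmN^*(X)$ for $p=2$ and their higher analogues for $p$ odd — produces chain maps whose induced maps on cohomology are the desired operations. Naturality in $X$ is automatic, since the cochain functor, the diagonal, and the chosen resolution are all functorial.

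Next I would record the formal properties: additivity, the instability bounds, stability under suspension, and the Cartan formula for the action on a cup product $x\cup y$. In Hopf-algebraic terms (\Cref{R:Steenrodascup}) the Cartan formula is exactly the compatibility of the coproduct of $\calA_p$ with the cup product, so that $\rmH^*(X;\bbF_p)$ becomes an algebra \emph{in the category of} $\calA_p$-modules; together with the identity $\Sq^{|x|}x=x\cup x$ this binds the additive operations to the commutative multiplication. These are direct computations with the cup-$i$ products and the coalgebra structure on the resolution, made available by the $\calE_\infty$-structure already at hand.

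The crux is then to prove that composites of generators satisfy the Ádem relations of \Cref{def:Steenrod}. This is Ádem's theorem, and it is by far the hardest step: one must compare the two ways of factoring a composite operation through iterated extended powers, which amounts to understanding $\rmH^*(B\Sigma_{p^2};\bbF_p)$ and its restriction to the wreath product $\mathbb{Z}/p\wr\mathbb{Z}/p$, and matching the two resulting expressions. Granting the Ádem relations, the action of the free graded algebra on the symbols $\Sq^n$ (resp. $\beta,\rmP^n$) factors through the quotient by the two-sided ideal they generate, which is by definition $\calA_p$; thus each $\rmH^*(X;\bbF_p)$ is a commutative algebra over $\calA_p$. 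Since every operation and the cup product are natural in $X$, the assignment $X\mapsto \rmH^*(X;\bbF_p)$ assembles into the functor $\rmH_{\calA_p}^*(-,\bbF_p)\colon \Top^\op\to \calA_p\mbox{-}\Alg$. I expect the Ádem-relation step to be the main obstacle; by contrast the construction of the operations and the verification of the Cartan formula are comparatively routine once the $\calE_\infty$-structure of \Cref{thm:bergerfresse} is in place.
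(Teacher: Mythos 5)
The paper does not prove this statement: it is quoted as the classical theorem of Steenrod and \'Adem (with \cite[Section 1.1]{baues2006algebra} as the background reference for \Cref{def:Steenrod}), so there is no in-paper argument to compare yours against. Your outline is the standard textbook route (Steenrod--Epstein, or May's algebraic approach): construct the operations from the $\Sigma_p$-equivariant failure of commutativity of the $p$-fold cup product, via cup-$i$ products or, equivalently, the $\calE_\infty$-structure of \Cref{thm:bergerfresse} restricted to $\mathbb{Z}/p\subset\Sigma_p$; check additivity, instability, the Cartan formula and $\Sq^{|x|}x=x\cup x$ as in \Cref{R:Steenrodascup}; and reduce the module structure over the presented algebra $\calA_p$ to \'Adem's theorem, proved by comparing the two factorizations of the iterated extended power through $\mathbb{Z}/p\wr\mathbb{Z}/p\subset\Sigma_{p^2}$. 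You correctly identify the \'Adem relations as the genuinely hard step and the rest as routine given the $\calE_\infty$-machinery already present in the paper; this is consistent with the paper's own remark that the Steenrod operations are determined by the $\calE_\infty$-structure on cochains. Two small points if you were to flesh this out: for odd $p$ the raw cyclic-power operations must be normalized (only certain degrees survive, and suitable unit constants are inserted) before they yield the $\rmP^n$ and $\beta$ of \Cref{def:Steenrod}, and one should also record $\beta^2=0$ (the Bockstein is a differential), which the paper's presentation of $\calA_p$ leaves implicit. Neither affects the correctness of your outline.
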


\begin{remark} The Steenrod algebra operations are the trace on the cohomology of the non-commutativity of the cup-product. In particular, they are determined by the $\calE_\infty$-structure on the cochains.
\end{remark}

Let us now define a new interleaving distance in positive characteristic.

\begin{definition}
	Let $X$ and $Y$ be two persistent spaces and let $p$ be a prime. The \emph{$\calA_p$-interleaving distance} is defined by
	\[
		\d_{\calA_p-\As}(X,Y)\coloneqq \d_{\calA_p\mbox{-}\Alg}(\rmH^*(X,\bbF_p),\rmH^*(Y,\bbF_p)) \ 
	\]  
	that it is the interleaving distance (see \Cref{def:interleaving}) computed in the category of $\calA_p$-algebras. Then we do not wish to specify a particular $p$, we will simply refer to this distance as \emph{Steenrod interleaving distance}.
\end{definition}

We have the following commutative diagram of functors
\begin{equation}\label{eq:diagmineq_ccrac_p}
\begin{tikzcd}
	&\Top^\op \ar[dl,"\rmN^*_{\calE_{\infty}}"'] \ar[dd,"\rmH^*_{\Com}" description] \ar[rd,"\rmN^*_{\As}"] 
	 & & \\
	\Alg_{\calE_\infty } \ar[rd,"\rmH^*"'] \ar[d,"\rmH_{\calA_p}^*"']
	\ar[rr,dotted, bend right=15]
	&&	\AsAlg \ar[r,"\For"] \ar[d,"\rmH"] 
	& \Ch_k \ar[d,"\rmH"] \\
	\calA_p\mbox{-}\Alg \ar[r,"\For"'] &\ComAlg \ar[r,"\For"'] &	\AsAlg \ar[r,"\For"'] & \grVect \;
\end{tikzcd}
\end{equation}
Using \Cref{lem::apply_functor} in characteristic $p$, we will deduce the following inequalities between the distances.

\begin{proposition} \label{prop::ineq_carac_p}
Let $X_\bullet,Y_\bullet:\sfR \to \Top$ be two persistent spaces. We have
\begin{align*}
	\d_{\ComAlg}(\rmH^*(X,\bbF_p),\rmH^*(Y,\bbF_p))  
	\leqslant\d_{\calA_p-\As}(X,Y)  
	\leqslant\d_{\calE_\infty,\bbF_p}(X,Y) \ 
\end{align*}
and for a commutative ring $k$ in \emph{any} characteristic 
\begin{align*}
	\d_{\ComAlg}(\rmH^*(X,k),\rmH^*(Y,k))  
	\leqslant \d_{\ho(\AsAlg)}(\rmC^*(X,k),\rmC^*(Y,k))
	\leqslant\d_{\calE_\infty, k}(X,Y) \ .
\end{align*}
\end{proposition}

\begin{proof}
	Propositions~\ref{prop:homotopy_of_dga} and~\ref{prop:homotopy_Einfty} implies that the functors $\rmN^*_{\calE_\infty}$ and $\rmN^*_{\As}$ passes to the homotopy category and thus diagram~\eqref{eq:diagmineq_ccrac_p} induces a commutative diagram:
	\begin{equation}\label{eq:diagmineq_ccrac_p_ho}
	\begin{tikzcd}
		&\Top^\op \ar[dl,"\rmN^*_{\calE_{\infty}}"'] \ar[dd,"\rmH^*_{\Com}" description] \ar[rd,"\rmN^*_{\As}"] 
		 & & \\
		\ho(\Alg_{\calE_\infty }) \ar[rd,"\rmH^*"'] \ar[d,"\rmH_{\calA_p}^*"']
		\ar[rr,dotted, bend right=15]
		&&	\ho(\AsAlg) \ar[r,"\For"] \ar[d,"\rmH"] 
		& \ho(\Ch_k) \ar[d,"\rmH"] \\
		\calA_p\mbox{-}\Alg \ar[r,"\For"'] &\ComAlg \ar[r,"\For"'] &	\AsAlg \ar[r,"\For"'] & \grVect \ ;
	\end{tikzcd}
	\end{equation}
	The two claimed string of inequalities then follows from \Cref{lem::apply_functor} applied to  diagram~\eqref{eq:diagmineq_ccrac_p_ho}.
\end{proof}

\begin{remark}[Effective computability of Steenrod distance] 
An important practical fact that makes the Steenrod distance appealing is that	there exists  \emph{algorithms} to compute the persistent Steenrod squares with coefficient in $\bbF_2$ (see \cite{aubrey2011persistent,medina2018persistence}).  
\end{remark}

We also have the following theorem of stability:

\begin{theorem}[Stability theorem - Commutative mod $p$ version]
	Let $(X,\d_X)$ and $(Y,\d_Y)$ be two metric spaces. We have the following inequality:
		\begin{align*}
		\d_{\calA_p-\As}(\calR(X,\d_X),\calR(Y,\d_Y)) \leqslant 2\dGH\left((X,\d_X),(Y,\d_Y)\right) \ ; \\
		\d_{\calA_p-\As}(\Cech(X,\d_X),\Cech(Y,\d_Y)) \leqslant 2\dGH\left((X,\d_X),(Y,\d_Y)\right) \ .
		\end{align*}
\end{theorem}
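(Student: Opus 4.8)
The plan is to run the argument of \Cref{thm:stabilite} essentially verbatim, the only genuinely new ingredient being the homotopy invariance of the Steenrod-module functor. First I would record the analogue of \Cref{prop:homotopy_of_dga} in the present setting: the functor $\rmH^*_{\calA_p}(-,\bbF_p)\colon \Top^\op \to \calA_p\mbox{-}\Alg$ sends homotopic continuous maps to the \emph{same} morphism of $\calA_p$-algebras. This is classical and in fact simpler than the dg-algebra case, since no passage to a homotopy category is needed: homotopic maps induce chain-homotopic maps on (normalized) cochains and hence strictly equal maps on cohomology, while naturality of the cup product and of the Steenrod operations guarantees that the resulting cohomology map is a morphism for the full $\calA_p$-algebra structure (both the product and all the operations $\Sq^n$, resp.\ $\beta$ and $\rmP^n$).

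With this in hand I would fix a correspondence $C\colon X \rightrightarrows Y$ with $\dist_m(C)\leqslant \eps$. By \Cref{lem:C_eps_simplicial} the correspondence $C$ is $\eps$-simplicial, both from $\calR(X,\d_X)$ to $\calR(Y,\d_Y)$ and from $\Cech(X,\d_X)$ to $\Cech(Y,\d_Y)$, and I treat the two cases uniformly. Any subordinate map $f\colon X \overset{C}{\to} Y$ then induces simplicial maps $\calR(X,\d_X)_r \to \calR(Y,\d_Y)_{r+\eps}$, natural in $r$, that is a morphism $\calR(X,\d_X)\to \calR(Y,\d_Y)[\eps]$ of persistent spaces. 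Applying $\rmH^*_{\calA_p}$ yields an $\eps$-morphism $\phi\colon \rmH^*_{\calA_p}(\calR(X,\d_X)) \to \rmH^*_{\calA_p}(\calR(Y,\d_Y))$ of copersistent $\calA_p$-algebras which, by \Cref{lemma:subordinate_are_homotopic} together with the homotopy invariance above, does not depend on the choice of subordinate map $f$. Symmetrically, the transpose correspondence $C^T\colon Y \rightrightarrows X$ produces a well-defined $\eps$-morphism $\psi\colon \rmH^*_{\calA_p}(\calR(Y,\d_Y)) \to \rmH^*_{\calA_p}(\calR(X,\d_X))$.

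It then remains to check that $\phi$ and $\psi$ assemble into an $\eps$-interleaving. By \Cref{rem:idetite_subordonnee} the identity $\id_X$ is subordinate to $C^T\circ C$, and a composite of maps subordinate to $C$ and to $C^T$ is subordinate to $C^T\circ C$; hence, by the uniqueness just established, the composite $\psi\circ\phi$ is the morphism induced by $\id_X$, namely the structural $2\eps$-endomorphism of $\rmH^*_{\calA_p}(\calR(X,\d_X))$ coming from its structure maps. The same reasoning applied to $C\circ C^T$ shows $\phi\circ\psi$ is the structural $2\eps$-endomorphism of $\rmH^*_{\calA_p}(\calR(Y,\d_Y))$, so the interleaving diagram of \Cref{def:interleaving} commutes in $\calA_p\mbox{-}\Alg$. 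This gives $\d_{\calA_p-\As}(\calR(X,\d_X),\calR(Y,\d_Y))\leqslant 2\eps$, and taking the infimum over all correspondences $C$ yields the claimed bound by $2\dGH((X,\d_X),(Y,\d_Y))$; the \v{C}ech case is word for word the same. The one point requiring genuine care---and where I would be most careful---is the very first step: one must verify that the induced cohomology maps respect the \emph{entire} $\calA_p$-module structure, not merely the cup product, which is precisely where naturality of the Steenrod operations enters in an essential way.
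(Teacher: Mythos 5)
Your proof is correct, but it takes a different route from the paper. The paper's own proof is a two-line corollary: it invokes the $\calE_\infty$ stability theorem (\Cref{thm:stability_Einfty_version}) together with the first inequality of \Cref{prop::ineq_carac_p}, namely $\d_{\calA_p-\As}\leqslant \d_{\calE_\infty,\bbF_p}$, which itself comes from applying $\rmH^*_{\calA_p}$ to persistent $\calE_\infty$-algebras and using \Cref{lem::apply_functor}. You instead re-run the correspondence argument of \Cref{thm:stabilite} directly at the level of $\calA_p$-algebras, replacing \Cref{prop:homotopy_of_dga} by the (correct, and indeed classical) observation that homotopic maps induce \emph{equal} morphisms of $\calA_p$-algebras on cohomology, by naturality of the cup product and of the Steenrod operations. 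Your route is more elementary and self-contained: it avoids the $\calE_\infty$ machinery entirely (Berger--Fresse, Mandell's homotopy invariance in \Cref{prop:homotopy_Einfty}) and, since homotopic maps already agree strictly on cohomology, it needs no passage to a homotopy category of algebras, whereas the associative-version proof has to work in $\ho(\AsAlg)$. What the paper's approach buys is economy and uniformity --- one stability theorem at the finest ($\calE_\infty$) level propagates to all coarser distances by functoriality. The only point you gloss over, which the paper's proof of \Cref{thm:stabilite} also glosses over, is that the identification of $\psi\circ\phi$ with the structural $2\eps$-endomorphism uses that $C^T\circ C$ has distortion at most $2\eps$ (triangle inequality) and is therefore $2\eps$-simplicial by \Cref{lem:C_eps_simplicial}, so that the uniqueness-up-to-homotopy of subordinate maps applies to it; this is a one-line remark, not a gap.
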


\begin{proof}
	It follows from \Cref{thm:stability_Einfty_version} and the first inequality in~\Cref{prop::ineq_carac_p}.
\end{proof}

\begin{example}\label{ex:Einfty_example}\label{ex55}
	Consider the compact spaces $X=S^3\vee S^5$ and $Y=\Sigma \bbC P^2$, the suspension of $\bbC P^2$. The spaces $X$ and $Y$ have the same cohomology. As $\rmH^*(X)$ is a sub-algebra of $\rmH^*(S^3)\times\rmH^*(S^5)$, and $Y$ is  a suspension, both spaces have trivial cup products in cohomology. In particular, their cohomology are the same as associative algebras.
	But, if we consider their cohomology with coefficient in $\bbF_2$ as module under the Steenrod algebra, then $\rmH^*(X)$ has only trivial Steenrod square while $\rmH^*(Y)$ has non-trivial ones. Note also that this difference is also detected by the $\calE_{\infty}$-structure on the cochain level. Therefore, we can proceed as in \Cref{ex:torus_sphere} or \Cref{ex:Borromeandiscret}. We can embed $X$ and $Y$ in $\bbR^8$ and take discretisations of $X$ and $Y$, depending of a small parameter $\alpha \ll 1$. We denote by $\widetilde{X}$ and $\widetilde{Y}$ respectively those discretisation, which are thus finite sets. We then get 
	\[
		\d_{\ComAlg}(\rmH^*(\calR(\widetilde{X})),\rmH^*(\calR(\widetilde{Y}))) <
		\d_{\calA_p-\As}(\calR(\widetilde{X}),\calR(\widetilde{Y})).
	\] 
	And the difference grows bigger and bigger when $\alpha$ goes to $0$ as the graded algebra distance will converge to $0$ while the Steenrod one will converge to a non-zero value, depending on the precise way they are embedded in some eucliden space $\bbR^n$, for instance $\bbR^8.$
	Similarly, \Cref{prop::ineq_carac_p} also implies 
	\[
		\d_{\ho\AsAlg}(\rmN^*(\calR(\widetilde{X})),\rmN^*(\calR(\widetilde{Y}))) <
		\d_{\calE_\infty}(\calR(\widetilde{X}),\calR(\widetilde{Y})) \ .
	\]
\end{example}
 
\subsection{Steenrod and $\calA_\infty$-transferred structure}

\begin{definition}[The interleaving distance $\d_{\calA_{p_\infty}}$]
	Let $p$ be a prime. We consider the subcategory of the homotopy category of $\calA_\infty$-algebras over the field $\bbF_p$ such that the objects are $\calA_\infty$-algebras over $\bbF_p$ which are also modules on the Steenrod algebra $\calA_p$, and morphisms are $\infty$-morphisms $f:A \rightsquigarrow B$ such that $f_1:A \rightarrow B$ are also a morphism of $\calA_p$-modules. We denote this category by $\ho(\calA_p-\AinfAlg)$. We denote by $\d_{\calA_{p_\infty}}$, the interleaving distance associated to this category.
\end{definition} 

\begin{remark}
	As $\calA_\infty$ is $\mathfrak{S}_n$-free, then this operad always encodes the notion of associativity up to homotopy  in characteristic $p>0$.
\end{remark}

\begin{definition}
	We define the \emph{$\calA_{p_\infty}$-algebra homology functor} as the composition
	\[
	\rmH_* \circ \rmT\rmC^*: \mathsf{fData}^\sfR \longrightarrow \ho\left(\calA_p-\AinfAlg\right)^{\sfR^\op}.
	\]
\end{definition}

\begin{example}	 \label{ex:80}
	Consider the same compact spaces $X$ and $Y$ as in \Cref{ex55}. The spaces $X$ and $Y$ have the same cohomology as a graded vector space, which is $k1\oplus kx \oplus ky$, where $1$ is in degree $0$, $x$ in degree $3$ and $y$ in degree $5$. We consider the transferred structure: all the higher products $m_i$ are trivial for degree reasons, only except if some of the variables are $1$. For instance, consider
	\[
	m_3: \rmH^p(S)\otimes \rmH^q(S) \otimes \rmH^r(S) \longrightarrow \rmH^{p+q+r-1}(S)
	\]
	(where $p,q$ and $r$ have to be in $\{0,3,5\}$ to have non-zero entries) and $S$ is the space $X$ or $Y$. Then, an imediate degree argument shows that  the product $m_3$ is trivial unless maybe $m_3(1,x,x),$ $m_3(x,1,x)$ or $m_3(x,x,1)=0$ (which are degree $5$). By \cite[Proposition 3.2.4.1]{lefevre2002categories}, as the cochain complex $\rmC^*(S)$ is a strictly unital $\calA_\infty$-algebra, then its homology is equivalent to a minimal model $A$ which is strictly unital and such that the $\calA_\infty$-morphism $i_\infty: A \rightsquigarrow\rmC^*(S)$ is strictly unital, so $m_3(1,a,b)=m_3(a,1,b)=m_3(a,b,1)=0$ for all $a,b$ in $\rmH^*(S)$ and similarly, all higher $m_k$ are null if at least one of the variable is $1$. So, the $A_\infty$-transferred structures on $\rmH^*(X)$ and $\rmH^*(Y)$ are trivial.
	
	As in \Cref{ex55}, we can  denote by $\widetilde{X}$ and $\widetilde{Y}$ respectively discretisations of $X$ and $Y$, which are thus finite sets. We then get 
	\[
	\d_{\calA_\infty,\bbF_p}(\calR(\widetilde{X}),\calR(\widetilde{Y})) <
	\d_{\calA_{p_\infty}}(\calR(\widetilde{X}),\calR(\widetilde{Y})).
	\] 
\end{example}

\begin{example}\label{ex:borromeanp} 
	Let us return again to \Cref{ex32}, that is the complements of $\beta$-thickenings of the borromean links and trivial entanglements of three circles in $S^3$. 
	Since the cohomology of these spaces is concentrated in degree less than 2, we obtain that the only non-zero Steenrod squares are given by $\Sq^1: \rmH^1(X)\to \rmH^2(X)$. The latter  is given by the self cup-product $\Sq^1(x)= x\cup x$, see \Cref{def:Steenrod} and \Cref{R:Steenrodascup}. In other words, the Steenrod squares of $X$ and $Y$ are the same. Similarly, the other Steenrod powers coincide for $X$ and $Y$. In particular, for the discretisation $\widetilde{X}$ and $\widetilde{Y}$, we obtain
	\[
			\d_{\calA_2-\As}(\calR(\widetilde{X}),\calR(\widetilde{Y}))<
			\d_{\calA_{2_\infty}}(\calR(\widetilde{X}),\calR(
			\widetilde{Y})).
	\]
\end{example}

\subsection{Distances in characteristic zero}
In this section, we fix $k$ a field of characteristic $0$. Numerous efficient tools have been developped in algebraic topology to deal with homotopy invariants specific to this case and we review here how they are related to our general formalism. In the characteristic zero case (and only this one), the homotopy theory of $\calE_\infty$-algebras is equivalent to that of algebras over another operad which encodes the structure of commutative associative algebra up to homotopy, called $\Com_\infty$, given by given by  Koszul duality theory (see \cite[Section 13.1.8]{LV2012} for $\Com_\infty$-algebras and \cite[Chapter 7]{LV2012} for Koszul duality theory). This operad is smaller than the operad of surjection (see \Cref{thm:bergerfresse}). One of the advantage of  $\Com_\infty$ is that we have a homotopy transfer theorem as for $\calA_\infty$ in~\Cref{sect:Ainfty}.
Therefore, as in \Cref{sub:contraction}, we have the functor 
\[
	\begin{array}{rcc}
		\rmH_* : \fTrans_{\coCh,\Com}^{\sfR^\op} & \longrightarrow & \ho\left(\infty\textsf{-}\Alg_{\Com_\infty}\right)^{\sfR^\op} \\
		(A,H,i,p,h)^\bullet & \longmapsto & (H,\{\mu_i\}_{i\in\N })^\bullet
	\end{array} \ .
\]
In particular, given $X_\bullet,Y_\bullet:\sfR \to \DCpx$ two finite filtered data, we can define their  $\Com_\infty$-interleaving distance: 
\[ 
	\d_{\ho(\infty\textsf{-}\Alg_{\Com_\infty})}(\rmH^*(X),\rmH^*(Y))\ .
\] 
The structure of $\Com_\infty$-algebra encodes more homotopy structure than that of $\calA_\infty$-algebra. However, by  \Cref{lem:Cinfty_inutile}, this new distance does not necessarily permit to differentiate more persistent spaces than the $A_\infty$ one.

\begin{lemma}\label{lem:Cinfty_inutile}
	Let $X_\bullet,Y_\bullet:\sfR \to \DCpx$ be two finite filtered data. We have 
	\[
		\d_{\ho(\infty\textsf{-}\Alg_{\Com_\infty})}(\rmH^*(X),\rmH^*(Y))=0
		\Longleftrightarrow
		\d_{\calA_\infty}(X,Y)=0 \ .
	\]
\end{lemma}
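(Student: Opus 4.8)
The plan is to reduce both sides of the equivalence to isomorphism statements via \Cref{lem:distance_nulle}, to dispatch the implication coming from the forgetful functor as routine, and to isolate the characteristic-zero input as the genuine content.

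First I would apply \Cref{lem:distance_nulle} to the two functors $\rmH_*\circ\rmT\rmC^*$ with values in $\ho\left(\infty\textsf{-}\Alg_{\calA_\infty}\right)^{\sfR^\op}$ and in $\ho\left(\infty\textsf{-}\Alg_{\Com_\infty}\right)^{\sfR^\op}$ respectively; this is legitimate precisely because $X,Y$ are finite filtered data. The statement then becomes the equivalence between: the persistent transferred $\calA_\infty$-algebras $\rmH^*(X),\rmH^*(Y)$ are isomorphic in $\ho\left(\infty\textsf{-}\Alg_{\calA_\infty}\right)^{\sfR^\op}$; and the persistent transferred $\Com_\infty$-algebras are isomorphic in $\ho\left(\infty\textsf{-}\Alg_{\Com_\infty}\right)^{\sfR^\op}$. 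The operad map $\calA_\infty\to\Com_\infty$ induced by $\As\to\Com$ yields a forgetful functor $\For\colon \ho\left(\infty\textsf{-}\Alg_{\Com_\infty}\right)\to\ho\left(\infty\textsf{-}\Alg_{\calA_\infty}\right)$, and since both transferred structures are built from the \emph{same} contraction datum $\rmT\rmC^*$ via \Cref{thm::HTT}, compatibility of homotopy transfer with this operad map gives $\For\circ\rmH_*^{\Com_\infty}\cong\rmH_*^{\calA_\infty}$. Hence \Cref{lem::apply_functor} applied to $\For$ yields $\d_{\calA_\infty}(X,Y)\leqslant \d_{\ho(\infty\textsf{-}\Alg_{\Com_\infty})}(\rmH^*(X),\rmH^*(Y))$, which settles the direction $\d_{\ho(\infty\textsf{-}\Alg_{\Com_\infty})}=0\Rightarrow\d_{\calA_\infty}=0$.

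For the converse, the substantial direction, I would assume $\d_{\calA_\infty}(X,Y)=0$, so that we are given a persistent $\infty$-isomorphism $\Phi\colon\rmH^*(X)\rightsquigarrow\rmH^*(Y)$ of $\calA_\infty$-algebras; as in the proof of \Cref{Prop:Borromean}, such an $\infty$-isomorphism is detected by its linear component $\phi_1$, a persistent isomorphism of the underlying cohomologies intertwining the cup products $m_2$. The point to exploit is that in characteristic zero a $\Com_\infty$-structure is \emph{not} extra data on top of an $\calA_\infty$-structure but an $\calA_\infty$-structure whose higher operations vanish on shuffles (Koszul duality, cf. \cite[Section 13.1.8]{LV2012}), and likewise a $\Com_\infty$-$\infty$-morphism is an $\calA_\infty$-$\infty$-morphism satisfying the analogous vanishing. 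I would then promote $\Phi$ to a $\Com_\infty$-isomorphism: transport the minimal $\Com_\infty$-model of $\rmH^*(X)$ along $\phi_1$ and correct the higher components inductively so as to restore the shuffle-vanishing condition, using the uniqueness up to $\infty$-isomorphism of transferred structures from \Cref{thm::HTT}.

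The main obstacle is exactly this last step. The forgetful functor $\For$ does \emph{not} reflect isomorphisms for formal reasons, so one cannot simply conjugate the $\Com_\infty$-operations of $\rmH^*(X)$ by $\phi_1$: the resulting operations need not vanish on shuffles, and the inductive correction must be shown to terminate compatibly with the persistence structure maps. Making this precise is where characteristic zero is indispensable — the statement is false over $\bbF_p$, where the Steenrod operations see strictly more, as in \Cref{ex55} — and it is the only place where more than \Cref{lem:distance_nulle} and functoriality is required. The finiteness of the filtered data enters only to guarantee (again through \Cref{lem:distance_nulle}) that distance zero amounts to a pointwise-plus-compatible isomorphism, so that the correction can be carried out one homotopy type at a time along the finitely many jump values.
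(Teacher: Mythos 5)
Your overall architecture matches the paper's: reduce both sides to isomorphism statements via \Cref{lem:distance_nulle} (using finiteness of the filtered data), dispatch the direction $\d_{\ho(\infty\textsf{-}\Alg_{\Com_\infty})}=0\Rightarrow\d_{\calA_\infty}=0$ by the forgetful functor and \Cref{lem::apply_functor}, and correctly locate the entire difficulty in the converse. But your resolution of that converse is a genuine gap, not a proof. The claim you need — that two $\Com_\infty$-algebras (equivalently, commutative dg-algebras) over a field of characteristic zero which are quasi-isomorphic as $\calA_\infty$-algebras are already quasi-isomorphic as $\Com_\infty$-algebras — is precisely Theorem A of \cite{campos2019lie}, which is the black box the paper invokes at this point (after passing through the equivalence $\ho(\infty\textsf{-}\Alg_{\Com_\infty})\cong\ho(\ComAlg)$ of \cite[Theorem 11.4.8]{LV2012}, the commutative analogue of \Cref{thm:equivalence_homotopy_category}). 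Your proposed substitute, namely transporting the $\Com_\infty$-structure along $\phi_1$ and then ``correcting the higher components inductively so as to restore the shuffle-vanishing condition,'' is exactly the naive strategy whose failure to be obvious is the reason that theorem is a substantial recent result: after transport you have two $\Com_\infty$-structures on the same graded vector space related by an $\calA_\infty$-gauge transformation with first component the identity, and there is no formal reason the correcting terms can be chosen to land in the sub-gauge-group of $\Com_\infty$-morphisms — the obstructions live in a quotient complex that does not vanish for degree reasons. You acknowledge the step is unproven (``the inductive correction must be shown to terminate compatibly with the persistence structure maps''), but nothing in \Cref{thm::HTT} or the uniqueness of transferred structures supplies the missing argument.

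A secondary, more minor point: having obtained (by whatever means) pointwise quasi-isomorphisms, one still has to assemble a persistent isomorphism in $\ho(\infty\textsf{-}\Alg_{\Com_\infty})^{\sfR^\op}$ to apply the ``if'' direction of \Cref{lem:distance_nulle}; your last sentence gestures at doing this ``one homotopy type at a time along the finitely many jump values,'' which is the right idea and is implicit in the paper's (admittedly terse) proof as well. So the only essential defect is the missing input \cite[Theorem A]{campos2019lie}: with that citation in place of your inductive correction, your argument becomes the paper's.
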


\begin{proof}
	Let $X_\bullet,Y_\bullet:\sfR \to \DCpx$ be two finite filtered data such that $\d_{\ho(\infty\textsf{-}\Alg_{\Com_\infty})}(\rmH^*(X),\rmH^*(Y))=0$. As in \Cref{thm:equivalence_homotopy_category}, by \cite[Theorem 11.4.8]{LV2012}, the homotopy category of differential graded commutative associative algebras and the homotopy category of $\Com_\infty$-algebras with the $\infty$-morphisms are equivalent. Therefore, by  \Cref{lem:distance_nulle}, $\rmH^*(X)$ and $\rmH^*(Y)$ are quasi-isomorphic as homotopy commutative algebras. By \cite[Theorem A]{campos2019lie}, this is equivalent to the fact that $\rmH^*(X)$ and $\rmH^*(Y)$ are quasi-isomorphic in the homotopy category of dg-associative algebras. Then by \Cref{thm:equivalence_homotopy_category} and \Cref{lem:distance_nulle}
	\[
		\d_{\ho(\infty\textsf{-}\Alg_{\Com_\infty})}(\rmH^*(X),\rmH^*(Y))=0
		\Longleftrightarrow
		\d_{\calA_\infty}(X,Y)=0 \ .
	\]
\end{proof}

\begin{remark}
	Since a $\Com_\infty$-algebra structure has a canonical underlying $\calA_\infty$-algebra structure, we automatically have an inequality 
 	\[
 		\d_{\ho(\infty\textsf{-}\Alg_{\Com_\infty})}(\rmH^*(X),\rmH^*(Y)) \geqslant \d_{\calA_\infty}(X,Y)
 	\]
	where the left hand side is the interleaving distance in the category of $\Com_\infty$-algebras. The \Cref{lem:Cinfty_inutile} suggests that the inequality might actually be an equality in many practical cases.
\end{remark}

\begin{remark}[The functor $\APL$]
	In characteristic zero, there is a functor
	\[
		\APL\colon \Top \longrightarrow \ComAlg \ ,
	\]
	(see \cite[Section~10]{FHT12}) such that, for any topological space $X$, there exists two natural quasi-isomorphisms of differential graded associative algebras
	\[
		\rmC^*(X) \overset{\sim}{\longrightarrow} \bullet \overset{\sim}{\longleftarrow} \APL(X) \ 
	\]
	(see \cite[Corollary 10.10]{FHT12}). As the functor $\APL$ (or the equivalent combinatorial model given by Felix \emph{et al.} in \cite{felix2009combinatorial}) is more computable, we expect that these constructions can be used to compute the $\calA_\infty$-interleaving distance for finite filtered data (see \Cref{prop:inequalities_Ainfty}).
\end{remark}

\begin{proposition}
	We have the following inequalities 
	\[
	\begin{tikzcd}[row sep = small]
	\d_{\calA_\infty,\Q}
	\ar[r,phantom, "=",sloped, description]
	& 
	\d_{\ho\AsAlg }
	\ar[r,phantom, "\geqslant",sloped, description]
	\ar[r,phantom, "_{(2)}", shift right=2.2ex]
	&
	\d_{\As,\Q} 
	\ar[r,phantom, "\geqslant" sloped, description] 
	\ar[r,phantom, "_{(1)}", shift right=2ex]
	& \d_{\grVect,\Q} 
	\end{tikzcd}
	\]
	for  for \emph{finite filtered data} (see \Cref{def:ffdata}). None of these inequalities are equalities in general. 
\end{proposition}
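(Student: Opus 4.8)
The plan is to note that the equality and both inequalities are immediate consequences of results already established, so that the only new content is the strictness, which I deduce from the explicit examples computed earlier in the paper.

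First, the equality $\d_{\calA_\infty,\Q} = \d_{\ho(\AsAlg)}$ (with $k=\Q$) is exactly~\Cref{prop:inequalities_Ainfty}(2): the homotopy transfer theorem yields, degreewise, $\infty$-quasi-isomorphisms between $\rmC^*$ and $\rmH^*$ which are mutually inverse in $\ho(\infty\textsf{-}\Alg_{\calA_\infty})$, and~\Cref{thm:equivalence_homotopy_category} identifies this homotopy category with $\ho(\AsAlg)$. For inequality $(2)$, $\d_{\ho(\AsAlg)} \geqslant \d_{\As,\Q}$, I would apply~\Cref{lem::apply_functor} to the cohomology functor $\rmH\colon \ho(\AsAlg)\to\AsAlg$ of the commutative diagram following~\Cref{nota:cochainfunctor}; this is the third displayed line of~\Cref{prop:inequalities}. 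For inequality $(1)$, $\d_{\As,\Q}\geqslant\d_{\grVect,\Q}$, I would apply~\Cref{lem::apply_functor} to the forgetful functor $\For\colon\AsAlg\to\grVect$, which is the first line of~\Cref{prop:inequalities}.

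It then remains to exhibit, for each inequality, finite filtered data over $\Q$ for which it is strict. For $(1)$ I would invoke~\Cref{ex:torus_sphere}: by~\Cref{Prop:torusvswedge}, whenever $0<\alpha<\tfrac14$ the discretisations $\Cech(D_X^\alpha)$, $\Cech(D_Y^\alpha)$ satisfy $\d_{\grVect}\leqslant 2\alpha$ while $\d_{\As}\geqslant\tfrac{1-2\alpha}{2}$, the separation coming from the relation $\alpha\cup\beta=\gamma$ in the cohomology of the torus against the trivial cup-product for the sphere-with-two-circles. For $(2)$ I would invoke~\Cref{ex:Borromeandiscret}: by~\Cref{Prop:Borromean}, whenever $0<\alpha<\tfrac{\ell}{4}$ the discretisations $\Cech(D_X^{\alpha,\ell})$, $\Cech(D_Y^{\alpha,\ell})$ satisfy $\d_{\As}\leqslant 2\alpha$ while $\d_{\calA_\infty}\geqslant\tfrac{\ell-2\alpha}{2}$, the separation coming from the non-triviality of the Massey product $m_3$ for the Borromean complement against its vanishing for the trivial entanglement.

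The only point requiring attention — and the main (mild) obstacle — is that both separating features must persist specifically in characteristic zero. This is the case: the cup-product computation of~\Cref{ex:torus_sphere} and the $m_3$-Massey-product computation of~\Cref{ex:borromean} are both recorded as holding for \emph{any} field $k$, hence in particular for $k=\Q$, so no additional argument is needed beyond specialising those examples.
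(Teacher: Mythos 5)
Your proposal is correct and follows essentially the same route as the paper: the paper's proof simply cites \Cref{prop:inequalities} together with \Cref{ex:torus_sphere} for inequality $(1)$, and \Cref{prop:inequalities_Ainfty} together with \Cref{ex32} for the equality and inequality $(2)$, exactly as you do. Your additional remark that the separating cup-product and Massey-product computations hold over any field, hence over $\Q$, is a useful clarification the paper leaves implicit.
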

\begin{proof}
	We just use
	\begin{enumerate}
		\item \Cref{prop:inequalities} and \Cref{ex:torus_sphere};
		\item \Cref{prop:inequalities_Ainfty} and \Cref{ex32}.
	\end{enumerate}
\end{proof}

\subsection{The best of both worlds}
We have seen in~\Cref{prop::ineq_carac_p} that the $\calE_\infty$-interleaving distance is one of the finer distances that we can define, but this distance seems difficult to calculate because of the intricate structure of an algebra of the operad of surjection and its homotopy category. As the operad $\calE_\infty$ encodes in particular the cup-product and the Steenrod operations we can restrict to the following distances.

\begin{notation}
	We denote by $\mathbb{P}$, the set containing all prime numbers and $0$. By an abuse of notation, we denote by $\d_{\calA_{0_\infty}}$ (resp. $\d_{\calA_{0}-\As}$) the interleaving distance $\d_{\calA_\infty,\Q}$ (resp. $\d_{\As,\Q}$) and $\bbF_0=\Q$.
\end{notation}

\begin{defiprop}\label{def:distance_max}
	Let $X_\bullet,Y_\bullet:\sfR \to \DCpx$ be two finite filtered data and let $p$ and $q$  be two numbers in $\mathbb{P}$. We define two distances given by
	\begin{align*}
		\d_{p_\infty ,q_\infty}(X,Y)
		& \coloneqq 
		\max\big(
		\d_{\calA_{p_\infty}}(X,Y),
		\d_{\calA_{q_\infty}}(X,Y)
		\big) \ , \\
		\d_{\mathbb{P}}(X,Y)
		& \coloneqq  
		\underset{p\in \mathbb{P}}{\sup}~	(\d_{\calA_{p_\infty}}(X,Y)) \ .
	\end{align*}
	More generally, for any persistent spaces $Z$, $T$, we define
	\begin{align*}
		\d_{p_\infty ,q_\infty}(Z,T)
		& \coloneqq 
		\max\big(
		\d_{\calA_{p}-\ho\Alg_{\As}}(Z,T),
		\d_{\calA_{q}-\ho\Alg_{\As}}(Z,T)
		\big) \ , \\
		\d_{\mathbb{P}}(Z,T)
		& \coloneqq  
		\underset{p\in \mathbb{P}}{\sup}~	(\d_{\calA_p-\ho\Alg_{\As}}(Z,T) )\ .
	\end{align*}
	In both cases, for all $p$ and $q$ in $\mathbb{P}$, we have
	\[
		\d_{p_\infty ,q_\infty}(X,Y) \leqslant \d_{\mathbb{P}}(X,Y) \ ,
	\]
	which is not an equality in general (see \Cref{ex:distance_max}).
\end{defiprop}

\begin{example}\label{ex:distance_max} 
	Fix $p$ an odd prime and consider the real projective plane $\RP $: it has the following cohomology
	\[
		\rmH^0(\RP ,\Z) = \Z \ , \ \rmH^1(\RP ,\Z) = 0 \ , \ \rmH^2(\RP ,\Z) =  \Z/2\Z \ ,
	\]
	and
	\[
		\rmH^0(\RP ,\Z/2\Z) = \Z/2\Z \ , \ \rmH^1(\RP ,\Z/2\Z) = \Z/2\Z \ , \ \rmH^2(\RP ,\Z/2\Z) =  \Z/2\Z \ .
	\]	
	Let $X=\Sigma\RP$ be the suspension of $\RP $ and $Y$ be a closed ball in $\R^3$. The spaces $X$ and $Y$ have trivial cup product, but the real projective space has non trivial Steenrod square, and its suspension too. As in \Cref{ex:torus_sphere}, we can construct two discretisations $\widetilde{X}$ and $\widetilde{Y}$ of $X$ and $Y$ respectively, depending of a parameter $\alpha$ such that
	\begin{align*}
		&\d_{0,p}\big(\Cech(\widetilde{X}),\Cech(  \widetilde{Y})\big) < \d_{\mathbb{P}}\big(\Cech(\widetilde{X}),\Cech( \widetilde{Y})\big) \ ; \\
		&\d_{0,p}\big(\calR(\widetilde{X}),\calR(  \widetilde{Y})\big) < \d_{\mathbb{P}}\big(\calR(\widetilde{X}),\calR(  \widetilde{Y})\big) \ .
	\end{align*}
	Similar examples can be obtained in other characteristic using lens spaces in place of projective spaces.
\end{example}

\begin{proposition}
	Let $p$ and $q$ be in $\mathbb{P}$. We have the following inequalities
	\[
	\begin{tikzcd}[row sep = small,  column sep = large]
	\d_{p_\infty ,q_\infty} 
	\ar[r,phantom, "\geqslant" sloped, description] 
	&
	\d_{\ho\Alg_{\As},\bbF_p}
	\end{tikzcd}
	\]
	which are not equalities in general
\end{proposition}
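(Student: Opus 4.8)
The plan is to derive the inequality purely formally, from the existence of a structure-forgetting functor together with the monotonicity Lemma~\ref{lem::apply_functor}, and to obtain the failure of equality by reusing an explicit example already analysed in the text.

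First I would record the relevant forgetful functor. Discarding the $\calA_p$-module structure while retaining the underlying homotopy associative algebra gives a functor
\[
\For \colon \ho(\calA_p\mbox{-}\AinfAlg) \longrightarrow \ho\left(\infty\textsf{-}\Alg_{\calA_\infty}\right),
\]
well defined on morphisms since every $\infty$-morphism whose linear part is a Steenrod map is in particular an $\infty$-morphism. Composing with the equivalence $\ho(\infty\textsf{-}\Alg_{\calA_\infty}) \cong \ho(\Alg_{\As})$ of \Cref{thm:equivalence_homotopy_category} lands us in $\ho(\AsAlg)$ computed over $\bbF_p$. Applying \Cref{lem::apply_functor} to this composite functor gives, for all the objects in sight,
\[
\d_{\calA_{p_\infty}}(-,-) \geqslant \d_{\ho\Alg_{\As},\bbF_p}(-,-),
\]
and, in the more general setting of \Cref{def:distance_max}, the same argument yields $\d_{\calA_p-\ho\Alg_{\As}} \geqslant \d_{\ho\Alg_{\As},\bbF_p}$ for arbitrary persistent spaces. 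Since $\d_{p_\infty,q_\infty}$ is by definition the maximum of its $p$- and $q$-components, it dominates the $p$-component, and the desired inequality $\d_{p_\infty,q_\infty} \geqslant \d_{\ho\Alg_{\As},\bbF_p}$ follows at once.

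For the assertion that equality fails in general, I would invoke the spaces $X = S^3 \vee S^5$ and $Y = \Sigma \bbC P^2$ and their discretisations exactly as in \Cref{ex55} and \Cref{ex:80}. There the transferred $\calA_\infty$-structures on $\rmH^*(X;\bbF_p)$ and $\rmH^*(Y;\bbF_p)$ are both trivial for degree reasons, so by \Cref{prop:inequalities_Ainfty} the right-hand side $\d_{\ho\Alg_{\As},\bbF_p}$ can be driven to $0$ as the discretisation is refined, whereas the nontrivial Steenrod squares of $\Sigma \bbC P^2$ keep $\d_{\calA_{p_\infty}}$ — and hence $\d_{p_\infty,q_\infty}$ — bounded away from $0$. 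This yields the strict inequality.

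The step I would treat most carefully is the identification of the two right-hand sides: I must check that forgetting the Steenrod structure and then transporting along the equivalence of \Cref{thm:equivalence_homotopy_category} genuinely recovers the distance $\d_{\ho\Alg_{\As},\bbF_p}$ as computed on $\rmC^*(-;\bbF_p)$, and not merely on the transferred minimal model. This is precisely what \Cref{prop:inequalities_Ainfty} secures for finite filtered data, and for general persistent spaces it is built into the very definition of $\d_{\calA_p-\ho\Alg_{\As}}$ in \Cref{def:distance_max}. Everything else is a formal consequence of functoriality and the definition of the maximum.
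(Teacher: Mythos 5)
Your derivation of the inequality is essentially the paper's: the paper simply says it ``follows by definition of $\sup$'', and the chain $\d_{p_\infty,q_\infty}\geqslant \d_{\calA_{p_\infty}}\geqslant \d_{\calA_\infty,\bbF_p}=\d_{\ho\Alg_{\As},\bbF_p}$ that you spell out (forgetful functor plus \Cref{lem::apply_functor}, then \Cref{prop:inequalities_Ainfty}~(2)) is exactly the chain already recorded in the summary diagram of \Cref{thm:resume}, so that part is fine and correctly careful about the identification of the right-hand side. Where you genuinely diverge from the paper is the strictness. The paper invokes \Cref{ex:distance_max} ($\Sigma\RP$ versus a closed ball, with $p=0$ or $p$ odd and $q=2$), where the mechanism is that the $\bbF_2$-cohomology already differs as a \emph{graded vector space} while the $\bbF_p$-cohomology does not, so the $q$-component of the max dominates everything visible over $\bbF_p$; it then sketches mod-$p$ Moore spaces $Y_p$ to cover the remaining pairs of characteristics. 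You instead reuse $S^3\vee S^5$ versus $\Sigma\bbC P^2$ from \Cref{ex55} and \Cref{ex:80}, where the mechanism is that the Steenrod action over $\bbF_2$ distinguishes spaces whose transferred $\calA_\infty$-structures agree. Both are legitimate and both examples come from the paper itself; yours isolates the ``Steenrod beats $\calA_\infty$'' phenomenon within one characteristic, while the paper's isolates the ``max over characteristics beats a single characteristic'' phenomenon. One imprecision to fix: the nontrivial $\Sq^2$ on $\Sigma\bbC P^2$ bounds $\d_{\calA_{2_\infty}}$ away from zero, not $\d_{\calA_{p_\infty}}$ for arbitrary $p$ (for odd $p$ the mod-$p$ Steenrod operations on both spaces vanish for degree reasons). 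So as written your argument produces a strict inequality only when $2\in\{p,q\}$ --- sufficient for ``not equalities in general'', but if strictness is wanted for every pair $(p,q)$ you would need the paper's supplementary Moore-space construction or an odd-primary analogue.
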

\begin{proof}
	The inequalities follows by definition of $\sup$, and it can be proved to be strict in general by using \Cref{ex:distance_max} with $p=0$ or $p$ odd and $q=2$. By attaching a disk $D^{n+1}$ on a sphere along a map of degree $p$ to get a space $Y_p$ allows to distinguish similarly this space to the disk in characteristic $p$ but not in characteristic $q\neq p$. 
\end{proof}

\begin{remark}	
	From the same way as in~\ref{def:distance_max}, we can define similar distances which take into account of  three or more characteristics: such distances are always smaller than the distance $\d_{\mathbb{P}}$ and we can construct spaces as in \Cref{ex:distance_max} for which $\d_{\mathbb{P}}$ is strictly bigger.
\end{remark}

\section{R\'esum\'e and proof of Theorem \ref{thm:maintheoremB}}\label{sec::resume}
In this paper we have defined several distances between finite filtered data, using the cohomology of their associated persistent spaces. In this section we finish to compare them all and in particular prove~\Cref{thm:maintheoremB}. We fix $p$ a prime and we recall that the $\calA_\infty$ interleaving distance is defined for the cohomology of persistent data with coefficient in a field of characteristic zero. Further we consider the $\calE_\infty$-interleaving distance of~\Cref{sec:Einfty} with value in $\mathbb{Z}$ for coefficient.
We can summarize all the interleaving distances constructed in this paper in the following diagram of distances. 

\begin{theorem}\label{thm:resume} 
	Consider  the various interleaving distances introduced in the paper and let $p$ be in $\mathbb{P}$.
\begin{enumerate} \item\label{th93item1} There is a string of inequalities 
\[
\begin{tikzcd}[row sep = small,  column sep = large]
&&&
\d_{\calA_{p}-\As}
\ar[rd,phantom, "\geqslant",sloped, description]
\ar[rd,phantom, "_{(\ref{item:4})}", shift right=2.2ex]
&& \\
\d_{\calE_\infty}
\ar[r,phantom, "\geqslant" sloped, description] 
&
\d_{\mathbb{P}}
\ar[r,phantom, "\geqslant" sloped, description] 
&
\d_{{p_\infty}} 
\ar[rd,phantom, "\geqslant" sloped, description] 
\ar[rd,phantom, "_{(\ref{item:7})}", shift right=2ex]
\ar[ru,phantom, "\geqslant" sloped, description] 
\ar[ru,phantom, "_{(\ref{item:5})}", shift right=2ex]
&~&
\d_{\As,\bbF_p} 
\ar[r,phantom, "\geqslant" sloped, description] 
\ar[r,phantom, "_{(\ref{item:1})}", shift right=2ex]
& \d_{\grVect,\bbF_p} \\
&&&
\d_{\calA_\infty,\bbF_p} 
\ar[ru,phantom, "\geqslant" sloped, description] 
\ar[ru,phantom, "_{(\ref{item:5})}", shift right=2ex]
&&
\end{tikzcd} \ .
\]

for \emph{finite filtered data} (\Cref{def:ffdata}). None of these inequalities are equalities in general. 

\item More generally, for arbitrary \emph{persistent spaces}, there is a string of inequalities  

\[
\begin{tikzcd}[row sep = small,  column sep = large]
&&&
\d_{\calA_{p}-\As}
\ar[rd,phantom, "\geqslant",sloped, description]
\ar[rd,phantom, "_{(\ref{item:4})}", shift right=2.2ex]
&& \\
&
\d_{\calE_\infty}
\ar[r,phantom, "\geqslant" sloped, description] 
&
\d_{\calA_p-\ho\Alg_{\As}} 
\ar[rd,phantom, "\geqslant" sloped, description] 
\ar[rd,phantom, "_{(\ref{item:7})}", shift right=2ex]
\ar[ru,phantom, "\geqslant" sloped, description] 
\ar[ru,phantom, "_{(\ref{item:5})}", shift right=2ex]
&~&
\d_{\As,\bbF_p} 
\ar[r,phantom, "\geqslant" sloped, description] 
\ar[r,phantom, "_{(\ref{item:1})}", shift right=2ex]
& \d_{\grVect,\bbF_p} \\
&&&
\d_{\ho\Alg_{\As},\bbF_p} 
\ar[ru,phantom, "\geqslant" sloped, description] 
\ar[ru,phantom, "_{(\ref{item:5})}", shift right=2ex]
&&
\end{tikzcd} \ .
\]
Further,  these inequalities are not equalities in general. 
\end{enumerate}
\end{theorem}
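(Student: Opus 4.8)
The plan is to derive \emph{every} inequality in both diagrams uniformly from \Cref{lem::apply_functor}, applied to the structural and forgetful functors assembled in the commutative diagram~\eqref{eq:diagmineq_ccrac_p_ho} (together with its transferred refinement provided by \Cref{prop:functor_to_Ainfty}), and then to certify the \emph{strictness} of each edge by pointing to a pair of discretised spaces already built earlier in the paper. The whole content is thus to match each edge of the diagram with one functor (for the $\geqslant$) and one example (for the non-equality).

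First I would record the two horizontal steps on the left. The inequality $\d_{\mathbb{P}}\geqslant \d_{p_\infty}$ is immediate from \Cref{def:distance_max}, since $\d_{\mathbb{P}}$ is a supremum of the $\d_{\calA_{q_\infty}}$ over $q\in\mathbb{P}$. For $\d_{\calE_\infty}\geqslant \d_{\mathbb{P}}$ I would factor the comparison through a base change and a forgetful step: reduction of the integral cochain $\calE_\infty$--algebra modulo $p$ is a functor, hence does not increase interleaving distance, and the mod--$p$ $\calE_\infty$--structure determines simultaneously the Steenrod action (it is the obstruction to strict commutativity of the cup product, \Cref{R:Steenrodascup}) and, through the forgetful functor to $\ho(\AsAlg)$ compatibly with \Cref{thm:equivalence_homotopy_category} and \Cref{prop:functor_to_Ainfty}, the transferred $\calA_\infty$--structure. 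This gives a functor $\ho(\Alg_{\calE_\infty})\to \ho(\calA_p-\AinfAlg)$ over $\bbF_p$ (with no Steenrod factor when $p=0$), whence $\d_{\calE_\infty}\geqslant \d_{\calA_{p_\infty}}$ for every $p$, and taking the supremum yields $\d_{\calE_\infty}\geqslant \d_{\mathbb{P}}$.

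Then I would treat the remaining branch edges as a labelled list, each reduced to a single forgetful functor and a single earlier example:
\begin{enumerate}
\setcounter{enumi}{0}
\item\label{item:1} $\For\colon \AsAlg\to \grVect$ gives $\d_{\As,\bbF_p}\geqslant \d_{\grVect,\bbF_p}$; strictness is \Cref{Prop:torusvswedge}, where the cup product is non-trivial on the torus but trivial on the sphere-with-two-circles.
\setcounter{enumi}{3}
\item\label{item:4} $\For\colon \calA_p\mbox{-}\Alg\to \AsAlg$ gives $\d_{\calA_p-\As}\geqslant \d_{\As,\bbF_p}$; strictness is \Cref{ex:Einfty_example} ($S^3\vee S^5$ versus $\Sigma\bbC P^2$), which have identical cup products but distinct Steenrod squares.
\item\label{item:5} forgetting the higher operations $m_{\geqslant 3}$ of a (Steenrod-)$\calA_\infty$--algebra down to its underlying cohomology algebra, combined with \Cref{prop:inequalities_Ainfty}, gives both $\d_{\calA_\infty,\bbF_p}\geqslant \d_{\As,\bbF_p}$ and $\d_{p_\infty}\geqslant \d_{\calA_p-\As}$; strictness is witnessed by \Cref{Prop:Borromean} and \Cref{ex:borromeanp}, where the triple Massey product distinguishes the Borromean rings from the trivial entanglement while cup products and Steenrod squares agree.
\setcounter{enumi}{6}
\item\label{item:7} forgetting the Steenrod module structure while keeping the $\calA_\infty$--structure gives $\d_{p_\infty}\geqslant \d_{\calA_\infty,\bbF_p}$; strictness is \Cref{ex:80}, where the transferred $\calA_\infty$--structure of $S^3\vee S^5$ and $\Sigma\bbC P^2$ is trivial for degree reasons yet the Steenrod squares differ.
\end{enumerate}
Composing the edges along the diagram gives the full chain of the first assertion, and \Cref{ex:distance_max} (e.g. $\Sigma\RP$ versus a ball) provides the strict separation $\d_{\mathbb{P}}>\d_{p_\infty}$ coming from a feature visible only in one characteristic.

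For the second assertion I would run the \emph{same} functorial bookkeeping one level up, replacing the transferred distances by their homotopy--category counterparts $\d_{\calA_p-\ho\Alg_{\As}}$ and $\d_{\ho\Alg_{\As},\bbF_p}$; since \Cref{prop:inequalities_Ainfty}(2) needs no finiteness hypothesis, these inequalities hold for arbitrary persistent spaces, and the strictness examples above---being themselves finite filtered data---remain valid witnesses. The step I expect to be the genuine obstacle is the strictness of the top edge $\d_{\calE_\infty}>\d_{\mathbb{P}}$: here one needs two (discretisable) spaces with identical Steenrod actions and identical transferred $\calA_\infty$--structures in \emph{every} characteristic, yet different integral $\calE_\infty$--type, i.e. separated by a genuinely secondary cohomology operation. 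Their existence is guaranteed in principle by the fact that the integral $\calE_\infty$--cochains form a complete homotopy invariant (\Cref{rem:mandell}) whereas the mod--$p$ and rational Steenrod/$\calA_\infty$ data are not; producing an explicit pair amenable to the discretisation scheme of \Cref{ex:torus_sphere} is the delicate point I would still have to resolve.
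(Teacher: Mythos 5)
Your proposal is correct and follows essentially the same route as the paper: every inequality is obtained from \Cref{lem::apply_functor} applied to the forgetful/base-change functors assembled in diagram~\eqref{eq:diagmineq_ccrac_p_ho}, strictness is certified by the same examples (\Cref{Prop:torusvswedge}, \Cref{ex:Einfty_example}, \Cref{ex:80}, \Cref{ex:borromeanp}, \Cref{ex:distance_max}), and the second part is reduced to the first via \Cref{prop:inequalities_Ainfty}(2) together with the observation that constant persistent spaces furnish counter-examples without any finiteness hypothesis. The one point you flag as unresolved---an explicit discretisable pair witnessing $\d_{\calE_\infty}>\d_{\mathbb{P}}$---is likewise left implicit in the paper, which establishes that edge only as an inequality via the universal coefficient theorem (your mod-$p$ reduction argument is the same base change) and does not supply a dedicated example for its strictness.
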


In particular, those distances are not equal for Rips or  \v{C}ech complex associated to  discretisations of  spaces.

\begin{remark}
	Note that the Main \Cref{thm:maintheoremB} is nothing but a special case of \Cref{thm:resume}.
\end{remark}

\begin{remark}\label{rem:homotopyinterleaving}
	In~\cite{BL17} Blumberg and Lesnick have studied the homotopy interleaving distance $d_{HI}$ for persistant (nice) topological spaces. Roughly speaking, it amounts to studying interleaving in the (weak) homotopy category of functors from $\R$ to $\Top$. The  $\calE_\infty$-functor $\rmN^*_{\calE_\infty}\colon \Top^{\op}\to \Alg_{\calE_\infty}$~\Cref{def:NEinfty} maps persistent topological spaces to persistent $\calE_\infty$-algebras. The homotopy invariance \Cref{prop:homotopy_Einfty} and stability \Cref{thm:stability_Einfty_version}, together with \cite[Theorem 1.7]{BL17} implies that 
	\[\d_{\calE_\infty} \,\leqslant \, \d_{HI}. \] 
	In view of~\Cref{rem:mandell}, one can expect those two distances to be close from each other in many cases.
\end{remark}

\begin{remark}
	If $p=0$, then, the diagram in \Cref{thm:resume} \eqref{th93item1} can be rewritten as follows
	\[
	\begin{tikzcd}[row sep = small,  column sep = large]
	\d_{\calE_\infty}
	\ar[r,phantom, "\geqslant" sloped, description] 
	&
	\d_{\mathbb{P}}
	\ar[r,phantom, "\geqslant" sloped, description] 
	&
	\d_{\calA_\infty,\Q} 
	\ar[r,phantom, "\geqslant" sloped, description] 
	&
	\d_{\As,\Q} 
	\ar[r,phantom, "\geqslant" sloped, description] 
	& \d_{\grVect,\Q} 
	\end{tikzcd}\ .
	\]
\end{remark}
\begin{proof}[Proof of \Cref{thm:resume}] 
The first string of inequalities as well as the fact that they are strict in general for finite data sets follows from 
\begin{enumerate}
	\item \label{item:1} \Cref{prop:inequalities} and \Cref{ex:torus_sphere};
	\item \label{item:4} forgetting the Steenrod power operations and \Cref{ex:Einfty_example};
	\item \label{item:7} forgetting the Steenrod power operations and \Cref{ex:80};
	\item \label{item:5} forgetting the $\calA_\infty$-structure  and  \Cref{ex:borromeanp}.
\end{enumerate}	
The same argument and part (2) of \Cref{prop:inequalities_Ainfty} yields the inequalities of the second part as well as the fact that they are strict in general. By the universal coefficient theorem~\cite{weibelhomological}, any  $\varepsilon$-interleaving between normalized chain complex $\rmN^*(X,\mathbb{Z})$ and $\rmN^*(Y,\mathbb{Z})$ induces a $\varepsilon$-interleaving between $\rmN^*(X,k)$ and $\rmN^*(Y,k)$ for any field $k$. This proves the inequality $\d_{\calE_\infty} \geqslant \d_{\mathbb{P}}$. 

Note that for general persistent spaces the counter-examples are  easier to produce than for those arising from Rips or \v{C}ech complexes. Indeed, it suffices to take $X$ and $Y$ two topological spaces such that $\rmN^*(X) $ and $\rmN^*(Y)$ are equivalent in the category $\sfC$ but distinct in the category $\sfD$ (here the categories are any of the one we consider for interleavings) to obtain two persistent spaces such that $\d_\sfC < \d_\sfD$. Indeed one can consider the constant functions $f:X\to \R$ and $g:Y\to \R$ that sends the spaces to the point $0$ in $\R$. Then the persistent spaces associated to the function satisfy the strict inequalities.
\end{proof}

\begin{remark}
	For many practical applications, it seems that the $\calA_\infty$, the $\calA_2$ and the $\calA_3$ interleaving distances will be useful: in fact, the spaces which are not differentiated by these distances but which are differentiated by other refined ones will be complicated to compute algorithmically, at least for the moment. Since it is possible to compute algorithmically Steenrod squares as well as the $\calA_\infty$-structure in characteristic 2 for finite data, we believe that the distance 
	$d_{{2_\infty}}$ is a promissing and reachable  lift of the classical interleaving distance judging from \Cref{Prop:torusvswedge}, \Cref{ex:torus_sphere_comput}, \Cref{ex:Borromeandiscret}, \Cref{Prop:Borromean}.
\end{remark}

\bibliographystyle{alpha}
\bibliography{biblio_persistence}

\end{document}